\documentclass[authoryear,preprint,11pt]{elsarticle}

\usepackage{amsmath,amssymb,amsthm}
\usepackage{mathtools}
\usepackage{geometry}
\usepackage{setspace}
\usepackage{hyperref}
\usepackage{enumitem}
\usepackage{booktabs}
\usepackage{graphicx}
\usepackage{algorithm}
\usepackage{algorithmic}
\usepackage{multirow}
\usepackage{pdflscape}
\usepackage{apptools}
\usepackage{placeins} 

\newtheorem{proposition}{Proposition}
\journal{European Journal of Operational Research}

\begin{document}

\begin{frontmatter}



\title{Clustering-enhanced adaptive Benders decomposition for energy systems planning optimization}


\author[inst1]{Jun Wen Law} 

\affiliation[inst1]{organization={MIT Energy Initiative, Massachusetts Institute of Technology},
            city={Cambridge},
            postcode={02139}, 
            state={MA},
            country={USA}}

\author[inst2]{Dharik S. Mallapragada\corref{cor1}} 

\affiliation[inst2]{organization={Chemical and Biomolecular Engineering Department, Tandon School of Engineering, New
York University},
            city={Brooklyn},
            postcode={11201}, 
            state={NY},
            country={USA}}

\ead{dharik.mallapragada@nyu.edu}    
\cortext[cor1]{Corresponding author}

\begin{abstract}
High-resolution energy system capacity expansion models (CEMs) for energy transition planning often result in large-scale mixed-integer linear programming (MILP) formulations. Benders decomposition offers a scalable solution approach by iteratively solving a master problem for investment decisions and multiple subproblems for operational decisions. However, accumulated Benders cuts generated by the subproblems can make master-problem solution a major computational bottleneck. Incomplete subproblem parallelization can also introduce further bottlenecks when subproblems exceed available CPUs. We develop clustering-enhanced Benders decomposition methods to address these challenges, by using clustering to group similar subproblems for: a) aggregated Benders cut construction and b) identification of representative subproblems to be solved most frequently. For grouped-cuts, we examine two adaptive formulations based on dual variables and a fixed-grouping formulation based on exogenous time-series inputs. We evaluate these methods in an electricity-sector CEM across varying system sizes, temporal subproblem lengths, inter-subproblem coupling strengths represented by CO$_2$ policy, computational resources, and stochastic settings. Relative to a benchmark regularized multi-cut formulation, adaptive grouped cuts outperform fixed grouping and provide substantial benefits under weak inter-temporal coupling. The largest gains occur in larger systems with shorter subproblem horizons, where the master problem accounts for a greater share of runtime. Their effectiveness declines under strong inter-temporal coupling, such as annual CO$_2$ emissions limits, where the benchmark multi-cut performs best. The representative-subproblem method outperforms the benchmark under limited parallelization when subproblem solution dominates runtime. Overall, the preferred Benders decomposition strategy depends on inter-subproblem coupling strength and whether computational burden lies in the master problem or the subproblems.

\end{abstract}



\begin{keyword}
Benders decomposition \sep capacity expansion modeling \sep clustering \sep grouped Benders cuts \sep representative subproblems


\end{keyword}

\end{frontmatter}


\section{Introduction}
\label{sec:ch4_intro}

Energy system planning models such as capacity expansion models (CEMs) are widely used to evaluate long-term decarbonization pathways by co-optimizing long-term investment and short-term operational decisions across technologies, resources, and policy constraints \citep{brown_pypsa_2018,brown_synergies_2018,pfenninger_calliope_2018,venkatesh_open_2022,macdonald_macroenergyjl_2025,he_dolphyn_2023,johnston_switch_2019}. As decarbonization pathways increasingly rely on variable renewable energy (VRE), end-use electrification, low-carbon fuels, and carbon management, CEMs require higher temporal, spatial, and technological resolution to capture operational variability and cross-sector interactions \citep{kondziella_flexibility_2016,lund_review_2015,mignone_drivers_2024,law_decarbonization_2025}. Detailed energy system models also often include constraints that couple operating variables across periods, such as storage inventory balance, technology ramping constraints, and long-term policy constraints, such as renewable share requirements and emission budgets \citep{petkov_power--hydrogen_2020,giovanniello_influence_2024,he_sector_2021}. Together, these features can produce optimization problems with tens of millions of variables and constraints that are computationally challenging to solve with off-the-shelf solvers and algorithms.

Approaches to improve the tractability of high-resolution CEMs include: a) LP relaxations, b) temporal or spatial aggregation, and c) model decomposition. Along with LP relaxations, temporal aggregation is most commonly used, which approximates system annual operations using representative periods \citep{hoffmann_review_2020,mallapragada_impact_2018,novo_planning_2022,catania_impact_2025}. These representative periods are commonly selected using clustering techniques, such as k-means, applied to time-dependent input data (e.g., electricity demand and VRE availability) \citep{teichgraeber_clustering_2019,poncelet_impact_2016,pineda_chronological_2018,arango_representative_2018,miraftabzadeh_k-means_2023}. However, since the reduced model is solved over weighted representative periods rather than the full chronological horizon, results can be sensitive to the aggregation method \citep{mallapragada_impact_2018,kotzur_impact_2018,kittel_temporal_2022}, and inter-temporal effects related to long-duration storage and multi-period policy constraints (e.g. emissions cap) may be misrepresented \citep{kotzur_time_2018, mallapragada_long-run_2020, blanke_time_2022}. Moreover, the reduced-space CEM can still scale poorly when extended to operational uncertainty, thereby limiting its applicability in such cases \citep{yi_aggregate_2021, hoettecke_enhanced_2021}.

An alternative approach is Benders decomposition (BD), which separates long-term investment decisions from short-term operational decisions \citep{geoffrion_generalized_1972,tang_improved_2013,marin_electric_1998,baringo_wind_2012,fischetti_benders_2016,you_multicut_2013,gruson_benders_2021,brandenberg_refined_2021,lumbreras_transmission_2013,kergosien_benders_2017}. In this framework, a master problem (MP) determines capacity expansion decisions, while operational subproblems (SPs) are solved independently under fixed investments and return operating information to the MP through linear constraints known as Benders cuts, which iteratively refine the MP's approximation of operating costs.

Compared with temporal aggregation approaches, BD can preserve the full chronological resolution of operations while enabling decomposition across periods, making it well suited for large-scale energy system models with high operational heterogeneity. Earlier studies established the value of BD in energy system models, but often handled inter-temporal constraints through reduced temporal representations \citep{lara_deterministic_2018,munoz_new_2016,soares_integrated_2022,zhang_stabilised_2024,li_mixed-integer_2022}, large operational SPs \citep{goke_stabilized_2024}, or simplified formulations for those constraints \citep{munoz_new_2016,lohmann_tailored_2017}. More recent work addressed this limitation by shifting inter-temporal constraints to the MP through SP-level budgeting variables \citep{jacobson_computationally_2024,pecci_regularized_2025}. This approach allows for parallelizing the solution of each operational SP for fixed values of investment and budgeting variables at each iteration, but also increases the number of MP variables and the number of Benders cuts added per iteration, accelerating the growth of the MP. 

Regularized BD improves algorithm convergence by mitigating oscillatory behavior of MP solutions between lower and upper bounds, via solving interior level-set problems to find sub-optimal feasible solutions \citep{zhang_integrated_2025,pecci_regularized_2025, lemarechal_new_1995, gondzio_new_2013}. Convergence is further enhanced by applying the algorithm to the LP relaxation of the CEM before enforcing integrality constraints, enabling the solution of significantly larger model instances \citep{pecci_regularized_2025}. Other approaches to speed up BD algorithm convergence include aggregating SPs through scenario partitioning, where partitions are iteratively refined until an exactness condition is satisfied \citep{ramirez-pico_benders_2023}. However, applying such partitioning can be challenging in energy system models because operating regimes may vary significantly across periods.

Despite these BD algorithmic advances, Benders cut management remains a key computational bottleneck for BD-enabled CEMs. Multi-cut BD generates one cut per SP, which can improve convergence but leads to rapid MP growth, while single-cut BD limits MP growth by aggregating SP information into one cut per iteration, which can provide weaker convergence guidance \citep{bertsimas_stochastic_2025,brahmbhatt_benders_2025}. Although clustering and BD have both been used to improve CEM tractability, they have generally been applied independently. This overlooks the possibility that SPs with similar operating regimes may generate redundant or weakly informative cuts that contribute little toward convergence in multi-cut BD. More broadly, the literature lacks methods that jointly leverage clustering and model decomposition to improve both cut quality and MP tractability.

We address this gap by developing clustering-enhanced BD algorithms that exploit similarities among SPs across operating periods. This study makes three main contributions. \textbf{First}, we develop grouped-cut BD formulations that aggregate Benders cuts over SP clusters rather than generating one cut per SP. These include adaptive dual-based grouping with shared or individual-recourse representations (\textit{adapt-G-S} and \textit{adapt-G-I}) that update SP clusters as the algorithm progresses, and a fixed-grouping variant based on clustering time-dependent input data with shared-recourse variables (\textit{fix-G-S}). \textbf{Second}, we propose a representative-SP method (\textit{rep-SP}) that solves all SPs only at regrouping iterations, and solves one representative SP per cluster in the intervening iterations, thus reducing SP solution effort when computing resources are limited. Such settings are increasingly relevant due to: a) growing interest in incorporating operational uncertainty in CEMs, which directly increases the number of SPs when using a scenario-based approximation of uncertainty, and b) constraints on CPU availability resulting from growing demand from AI workloads. \textbf{Third}, we evaluate these methods in electricity-sector case studies using the MACRO CEM \citep{macdonald_macroenergyjl_2025} across model and SP sizes, CO$_2$ policy settings, hardware availability, and a stochastic case with multiple weather-year realizations, comparing their computational runtime against a state-of-the-art regularized multi-cut BD formulation \citep{pecci_regularized_2025}.

The results show that no single BD formulation performs best across all settings. Rather, the most effective approach depends on the model structure, specifically the strength of inter-temporal coupling, and computational resource availability. The \textit{adapt-G-S} and \textit{adapt-G-I} formulations consistently outperform \textit{fix-G-S}, and are most effective when inter-temporal policy coupling is weak and MP solution time dominates runtime. However, their benefits decline under CO$_2$ cap constraints, where budgeting variables create strong inter-temporal coupling across SPs and the benchmark multi-cut performs best. In contrast, \textit{rep-SP} is most valuable when SP solution effort dominates, particularly under limited SP parallelization or stochastic cases with many operational subproblems.

\section{Methods}
\label{sec:ch4_methods}

\subsection{MACRO capacity expansion model formulation}
\label{subsec:ch4_macro}

We evaluate all BD methods using an electricity-sector version of the MACRO CEM for case studies derived from the IPM regions of the United States \citep{macdonald_macroenergyjl_2025, us_epa_documentation_2018}. MACRO is a graph-based CEM that co-optimizes long-term investment decisions and short-term dispatch operations under infrastructure, operational, transmission, and policy constraints.

Let \(y \in \mathbb{R}^{n_y}\) denote the vector of long-term planning decisions, with \(y_j \in \mathbb{Z}\) for \(j \in \mathcal{I}\), where \(\mathcal{I}\) indexes the integer planning variables.  Let \(\mathcal{S}\) denote the set of operational periods, \(\mathcal{T}_s\) the set of hourly time steps within each period \(s \in \mathcal{S}\), and \(x_{s,t} \in \mathbb{R}^{n_x}\) the vector of operational decisions at hour \(t\) in period \(s\). Let \(c_I \in \mathbb{R}^{n_y}\) and \(c_{s,t} \in \mathbb{R}^{n_x}\) denote the investment cost vector and hourly operating cost vector, respectively. We define \(x_s := \{x_{s,t}\}_{t \in \mathcal{T}_s}\) as the vector of operational decisions within period \(s\). Under inter-temporal constraints such as annual CO$_2$ caps, budgeting variables \(q := \{q_s\}_{s \in \mathcal{S}}\) are introduced to allocate a system-wide annual budget \(\bar{Q}\) across periods. The resulting CEM formulation is shown in Eq. \ref{eq:budget_problem}. The objective function in Eq. \ref{eq:budget_problem_obj} minimizes the total investment and operating costs, Eq. \ref{eq:budget_problem_op} represents operational constraints within each temporal period $s$, such as hourly supply-demand balance, VRE availability, generator ramping limits, inter-regional flow, and storage inventory balance. Eq. \ref{eq:budget_problem_plan} represents constraints on planning decisions, including limits on capacity expansion, network expansion, and retirement. Finally, Eq. \ref{eq:budget_problem_local} and Eq. \ref{eq:budget_problem_global} represent inter-temporal coupling constraints spanning multiple periods using budgeting variables.

\begin{subequations}
\label{eq:budget_problem}
\begin{align}
\min_{y,\{q_s\},\{x_s\}} \quad
& c_I^\top y
+ \sum_{s\in\mathcal{S}} \sum_{t\in\mathcal{T}_s} c_{s,t}^\top x_{s,t}
\label{eq:budget_problem_obj} \\
\text{s.t.} \quad
& A_s x_s + B_s y \le d_s
&& \forall s \in \mathcal{S}
\label{eq:budget_problem_op} \\
& \sum_{t\in\mathcal{T}_s} e_{s,t}^\top x_{s,t} \le q_s
&& \forall s \in \mathcal{S}
\label{eq:budget_problem_local} \\
& \sum_{s\in\mathcal{S}} q_s = \bar{Q}
\label{eq:budget_problem_global} \\
& R y \le r
\label{eq:budget_problem_plan} \\
& x_{s,t} \ge 0
&& \forall s \in \mathcal{S},\;\forall t \in \mathcal{T}_s
\label{eq:budget_problem_x_domain} \\
& q_s \ge 0
&& \forall s \in \mathcal{S}
\label{eq:budget_problem_q_domain} \\
& y \ge 0
\label{eq:budget_problem_y_domain} \\
& y_j \in \mathbb{Z}
&& \forall j \in \mathcal{I}
\label{eq:budget_problem_int}
\end{align}
\end{subequations}


\subsection{Benchmark regularized multi-cut BD}
\label{subsec:ch4_benchmark}

The benchmark BD method used in this study is the regularized multi-cut BD implementation in MACRO, based on \cite{pecci_regularized_2025}. In this approach, the CEM formulation in Eq. \ref{eq:budget_problem} is decomposed into a MP and a set of operational SPs, one for each temporal period $s \in \mathcal{S}$. The MP determines the planning decisions $y$ and budget variables \(q := \{q_s\}_{s \in \mathcal{S}}\) when inter-temporal constraints are present, while each SP determines operational decisions $x_s$ in its corresponding period.

At Benders iteration $i \geq 0$, the operational SP in Eq. \ref{eq:subproblem} is solved for each $s \in \mathcal{S}$ given fixed planning decisions $y^{(i)}$ and the corresponding budget allocation $q_s^{(i)}$:
\begin{subequations}
\label{eq:subproblem}
\begin{align}
f_s^{(i)} = \min \quad
& \sum_{t\in\mathcal{T}_s} c_{s,t}^\top x_{s,t}
\label{eq:subproblem_obj} \\
\text{s.t.} \quad
& A_s x_s + B_s y \le d_s
\label{eq:subproblem_op} \\
& \sum_{t\in\mathcal{T}_s} e_{s,t}^\top x_{s,t} \le q_s
\label{eq:subproblem_budget} \\
& y = y^{(i)}
&& :\lambda_s^{(i)}
\label{eq:subproblem_fix_y} \\
& q_s = q_s^{(i)}
&& :\pi_s^{(i)}
\label{eq:subproblem_fix_q} \\
& x_{s,t} \ge 0
&& \forall t \in \mathcal{T}_s
\label{eq:subproblem_domain}
\end{align}
\end{subequations}
where $(\lambda_s^{(i)}, \pi_s^{(i)})$ are the Lagrangian multipliers, or dual variables associated with constraints fixing $y$ and $q_s$, namely Eq. \ref{eq:subproblem_fix_y} and Eq. \ref{eq:subproblem_fix_q}, respectively. Here, we assume that all SPs of Eq. \ref{eq:subproblem} remain feasible for any values of $y^{(i)}$ and $q^{(i)}_s$, which can be easily achieved by introducing slack variables in the power supply-demand balance constraint in Eq. \ref{eq:subproblem_op} and the emissions budget constraint in Eq. \ref{eq:subproblem_budget} that are penalized in the SP objective function.

The incumbent upper bound at iteration $i$ is defined as:
\begin{equation}
\label{eq:ub_update}
UB^{(i)} =
\min_{k\in\{0,\dots,i\}}
\left\{
c_I^\top y^{(k)} + \sum_{s\in\mathcal{S}} f_s^{(k)}
\right\},
\end{equation}
where $(y^*, q^*)$ denotes the planning and budget solution associated with the best $UB^{(i)}$. Then, the MP in Eq. \ref{eq:multicut_master} is solved to obtain the new planning and budget solution $(y^{(i+1)}, q^{(i+1)})$:

\begin{subequations}
\label{eq:multicut_master}
\begin{align}
\min \quad
& c_I^\top y + \sum_{s\in\mathcal{S}} \theta_s
\label{eq:multicut_master_obj} \\
\text{s.t.} \quad
& \theta_s \ge
f_s^{(k)} + (\lambda_s^{(k)})^\top (y-y^{(k)}) + \pi_s^{(k)} (q_s-q_s^{(k)})
&& \forall s\in\mathcal{S},\;\forall k\in\{0,\dots,i\}
\label{eq:multicut_master_cut} \\
& \sum_{s\in\mathcal{S}} q_s = \bar{Q}
\label{eq:multicut_master_budget} \\
& R y \le r
\label{eq:multicut_master_plan} \\
& y \ge 0,\;\; y_j \in \mathbb{Z}
&& \forall j \in \mathcal{I}
\label{eq:multicut_master_y_domain} \\
& q_s \ge 0
&& \forall s\in\mathcal{S}
\label{eq:multicut_master_domain} \\
& \theta_s \ge 0
&& \forall s\in\mathcal{S}
\label{eq:multicut_theta_domain}
\end{align}
\end{subequations}

where \(\theta_s\) is an auxiliary variable used to approximate the recourse cost (i.e., optimal operating cost) of SP \(s\), and Eq. \ref{eq:multicut_master_cut} represents the Benders optimality cuts constructed using \(f_s^{(k)}\) and \((\lambda_s^{(k)}, \pi_s^{(k)})\) obtained from solving the SPs at previous Benders iterations \(k \in \{0,\dots,i\}\). 

The lower bound $LB^{(i)}$ is then updated as the optimal objective value of the MP in Eq. \ref{eq:multicut_master}. In cases without budgeting variables, the terms involving $q_s$ and $\pi_s^{(k)}$ in Eq. \ref{eq:multicut_master_cut} are omitted, together with the budgeting constraint Eq. \ref{eq:multicut_master_budget}.

Finally, convergence is checked by computing the optimality gap and a user-defined tolerance $\epsilon_{\mathrm{tol}}$:
\begin{equation}
\label{eq:benders_gap}
\frac{UB^{(i)}-LB^{(i)}}{LB^{(i)}} \le \epsilon_{\mathrm{tol}}.
\end{equation}

If convergence is not achieved after each MP solve, an interior-point level-set regularization step is used to select a stabilized planning solution for evaluating SPs in the next iteration as described in Eq. \ref{eq:reg_problem} in \ref{sec:benders_SI_algorithms}. In MILP instances, a two-stage procedure was used following \citep{pecci_regularized_2025}  where Benders cuts are first generated under a linear relaxation stage before enforcing integrality in the MP. More details of the benchmark multi-cut formulation, including algorithms, can be found in \ref{sec:benders_SI_algorithms}.


\subsection{Clustering-enhanced grouped-cut BD}
\label{subsec:ch4_grouped}

Grouped Benders cut methods reduce MP growth by aggregating Benders cuts across groups of SPs. The grouped-cut variants considered in this study differ in how SP group assignments are constructed and how recourse variables are represented in the MP. In \textit{fix-G-S}, the group assignments are fixed before BD iterations by clustering time-dependent input data. In \textit{adapt-G-S} and \textit{adapt-G-I}, group assignments are updated dynamically across BD iterations by clustering SP dual variables. The shared-recourse variants (\textit{fix-G-S} and \textit{adapt-G-S}) use one recourse variable per group, while \textit{adapt-G-I} retains one per SP.

Although grouping strategies have been explored in the Benders literature, they remain limited and existing approaches differ significantly from the method proposed here. For example, \cite{ramirez-pico_benders_2023} developed a method for two-stage stochastic programs in which SPs are aggregated under a fixed grouping and solved to convergence. The grouping is refined only when exactness conditions fail at convergence, with updates occurring between successive applications of the BD algorithm to the aggregated problem.  

In the main-text presentation, we focus on the \textit{adapt-G-I} formulation, with the formulations and algorithms for \textit{fix-G-S} and \textit{adapt-G-S} provided in \ref{sec:benders_SI_algorithms_fix} and \ref{sec:benders_SI_algorithms_adapt_shared} respectively. All grouped-cut BD formulations use the same regularization and two-stage integer recovery procedure as the multi-cut benchmark described in \ref{sec:benders_SI_algorithms_reg}. This ensures that any performance differences can be attributed solely to the cut grouping approach.


\subsubsection{Adaptive grouped-cut formulation with individual recourse}
\label{subsubsec:ch4_adaptive_individual_recourse}
The \textit{adapt-G-I} representation retains a recourse variable $\theta_s$ for each SP $s \in \mathcal{S}$, which allows the MP to approximate the recourse cost at the SP level, similar to the multi-cut formulation. Let $\mathcal{P}^{(k)} = \{\mathcal{S}_g^{(k)}\}_{g\in\mathcal{G}^{(k)}}$ denote the SP grouping used to generate grouped-cuts at Benders iteration $k$. Since the recourse variables remain indexed at the SP level, each \(\theta_s\) preserves the same interpretation throughout the algorithm, even when group assignments change. Therefore, previously generated grouped cuts remain valid across Benders iterations and can be utilized directly without any modification. The MP at iteration $i$ is:
\begin{subequations}
\label{eq:adaptive_disaggregated_master}
\begin{align}
\min \quad
& c_I^\top y + \sum_{s\in\mathcal{S}} \theta_s
\label{eq:adaptive_disaggregated_master_obj} \\
\text{s.t.} \quad
& \sum_{s\in\mathcal{S}_g^{(k)}} \theta_s \ge
\sum_{s\in\mathcal{S}_g^{(k)}}
\left[
f_s^{(k)} + (\lambda_s^{(k)})^\top (y-y^{(k)}) + \pi_s^{(k)} (q_s-q_s^{(k)})
\right]
&& \forall k \in \{0,\dots,i\},\;\forall g\in\mathcal{G}^{(k)}
\label{eq:adaptive_disaggregated_master_cut} \\
& \sum_{s\in\mathcal{S}} q_s = \bar{Q}
\label{eq:adaptive_disaggregated_master_budget} \\
& R y \le r
\label{eq:adaptive_disaggregated_master_plan} \\
& y \ge 0,\;\; y_j \in \mathbb{Z}
&& \forall j \in \mathcal{I}
\label{eq:adaptive_disaggregated_master_y_domain} \\
& q_s \ge 0
&& \forall s\in\mathcal{S}
\label{eq:adaptive_disaggregated_master_domain} \\
& \theta_s \ge 0
&& \forall s\in\mathcal{S}
\label{eq:adaptive_disaggregated_theta_domain}
\end{align}
\end{subequations}

The \textit{adapt-G-I} procedure with individual recourse is summarized in Algorithm \ref{alg:adaptive_disaggregated}, with the mathematical validity of the formulation established in \ref{sec:benders_SI_validity_adaptive_individual}.

\begin{algorithm}[!tb]
\caption{Adaptive grouped-cut BD with individual recourse (\textit{adapt-G-I})}
\label{alg:adaptive_disaggregated}
\begin{algorithmic}[1]
\REQUIRE Tolerance $\epsilon_{\mathrm{tol}}$, maximum iterations $I_{\max}$, 
    regrouping frequency
\STATE \textbf{Initialization:}
\STATE Solve MP in Eq. \ref{eq:adaptive_disaggregated_master} without Benders cuts 
    to obtain $(y^{(0)},q^{(0)})$
\FOR{$i = 0,\dots,I_{\max}$}
    \STATE Solve all SPs in Eq. \ref{eq:subproblem} using $(y^{(i)},q^{(i)})$
    \STATE Compute $UB^{(i)}$ using Eq. \ref{eq:ub_update}
    \IF{$i$ is a regrouping iteration}
        \STATE Construct the current group assignment $\mathcal{P}^{(i)} = 
            \{\mathcal{S}_g^{(i)}\}_{g\in\mathcal{G}^{(i)}}$ by clustering 
             the SPs based on their optimal dual variables $(\lambda_s^{(i)},\pi_s^{(i)})$
    \ENDIF
    \STATE For each $g\in\mathcal{G}^{(i)}$, construct one grouped Benders cut 
        of the form in Eq. \ref{eq:adaptive_disaggregated_master_cut} by summing 
        the SP-level cut coefficients over all 
        $s\in\mathcal{S}_g^{(i)}$
    \STATE Add the resulting grouped Benders cuts to MP in
        Eq. \ref{eq:adaptive_disaggregated_master}
    \STATE Solve the updated MP to obtain $LB^{(i)}$
    \IF{$\dfrac{UB^{(i)} - LB^{(i)}}{LB^{(i)}} \le \epsilon_{\mathrm{tol}}$}
        \STATE \textbf{Return} $(y^*,q^*)$
    \ELSE
        \STATE Solve the level-set regularization problem in Eq. \ref{eq:reg_problem} with grouped cuts to obtain $(y^{(i+1)},q^{(i+1)})$
    \ENDIF
\ENDFOR
\end{algorithmic}
\end{algorithm}

\subsection{Adaptive representative-subproblem formulation}
\label{subsec:ch4_partial_sp}

Clustering can also be leveraged to reduce the number of SPs solved per BD iteration, which is especially valuable when the number of SPs exceeds available CPUs and solving all SPs requires multiple sequential batches. Building on the adaptive grouping framework, the \textit{rep-SP} method alternates between full-SP BD iterations, which solve all SPs, and representative-SP BD iterations, which solve only one representative SP per group. 

At full-SP iterations, clustering is performed on SP dual variables to generate an SP group assignment and identify one representative SP per group, selected as the SP with dual vector closest to the cluster centroid. At representative-SP iterations, only the representative SPs from the most recent grouping are solved, and only their corresponding cuts are added to the MP. As such, the upper bound is updated only at full-SP iterations, and convergence is certified only at those iterations. Nonetheless, if the optimality gap falls below the convergence tolerance at a representative-SP iteration, a full-SP iteration is triggered to certify convergence. Full-SP solves also occur during an initial warm-start phase for $L_{\mathrm{warm}}$ iterations, as early iterations tend to exhibit more diverse SP responses when the planning solution is far from optimality. Under the two-stage MILP solution procedure adopted here as summarized in Algorithm \ref{alg:two_stage_milp} in \ref{sec:benders_SI_algorithms_reg}, the \textit{rep-SP} method is applied only to the linear-relaxation phase, since prior work \citep{pecci_regularized_2025} has shown that phase dominates overall BD algorithm runtime.

Although methods that solve subsets of SPs have been studied previously, they differ substantially from the \textit{rep-SP} method proposed here. For example, \cite{mazzi_benders_2021} developed oracle-based Benders methods where only a subset of SPs is solved exactly at a given iteration, with the remaining SPs represented using adaptive oracles that provide valid inexact cuts and upper bounds. Likewise, \cite{allen_improvements_2023} developed a machine-learning-based framework that selects SPs to solve using the historical variance of SP objective values across iterations, with periodic solving of the full SP set to recover feasible upper bounds and evaluate convergence. In contrast, the method proposed here selects representative SPs based on adaptive clustering of SP dual variables, therefore exploiting similarity in dual variables among SPs within clustering iterations, rather than historical variation of individual SP behavior across iterations. The \textit{rep-SP} method is summarized in Algorithm \ref{alg:partial_sp} in \ref{sec:benders_SI_algorithms}, and is evaluated for a selected case study under varying degrees of SP parallelization, as well as a stochastic case study in Section \ref{subsec:ch4_results_rep_SP}.

\subsection{Case studies and input assumptions}
\label{subsec:ch4_case_studies}

The clustering-enhanced BD methods were evaluated using MACRO for 2050 planning case studies based on 11, 20, and 26 zones representations of the Eastern U.S.\ electricity system, with zonal definitions based on regions defined in the EPA integrated planning model (IPM) \citep{us_epa_documentation_2018} as summarized in Table \ref{tab_benders_SI_case_study_regions}, and the geographical scope shown in Figure \ref{Benders_Regions}A. 

Existing generation capacity and operating-cost assumptions were obtained from PowerGenome as shown in Table \ref{tab_benders_SI_existing_capacity_ipm} \citep{schivley_powergenomepowergenome_2023}, while existing inter-regional transmission capacities were based on EPA IPM as shown in Table \ref{tab_benders_SI_transmission} \citep{us_epa_documentation_2018}. Cost and performance assumptions for greenfield power-sector technologies were obtained from NREL's Annual Technology Baseline (ATB) 2024 as shown in Table \ref{tab_Benders_SI_Greenfield} \citep{mirletz_2024_2024}, with regional capital-cost multipliers and fuel-price assumptions extracted from EIA's Annual Energy Outlook (AEO) 2023 as shown in Table \ref{tab_benders_SI_regional_cost_multipliers} \citep{us_eia_annual_2023}. Generation and transmission expansion decisions are represented using integer investment capacities in MACRO (Table \ref{tab_benders_SI_discrete_capacity}), resulting in a MILP formulation.

Hourly regional electricity demand was based on Princeton Net-Zero America's high-electrification (E+) scenario for 2050~\citep{larson_net-zero_2021} as shown in Table \ref{tab_benders_SI_regional_demand}, in which its original state-level profiles were mapped to the IPM regions using county-level population weights derived from the 2021 U.S.\ Census Bureau data \citep{us_census_bureau_county_nodate}. Hourly wind and solar resource availability and supply curves for each modeled region were characterized using data corresponding to 2011 weather year from the ZEPHYR (Zero-emissions Electricity system Planning with HourlY operational Resolution) framework as shown in Table \ref{tab_benders_SI_vre_cf_capacity} \citep{brown_zephyr_2022}, where site-level maximum capacity and hourly capacity factors were derived from NREL datasets such as the National Solar Radiation Database (NSRDB)~\citep{sengupta_national_2018}, WIND Toolkit (WTK)~\citep{draxl_wind_2015}, and Regional Energy Deployment System (ReEDS) model~\citep{reeds_modeling_and_analysis_team_regional_2021}. Table \ref{tab:ch4_case_study_summary} summarizes the total electricity demand, number of integer planning variables, continuous variables, and constraints for the 11, 20, and 26-zone case studies. Additional input assumptions are provided in  \ref{sec:benders_SI_inputs}.

\begin{figure}[h]
\centering
\includegraphics[width=1\textwidth]{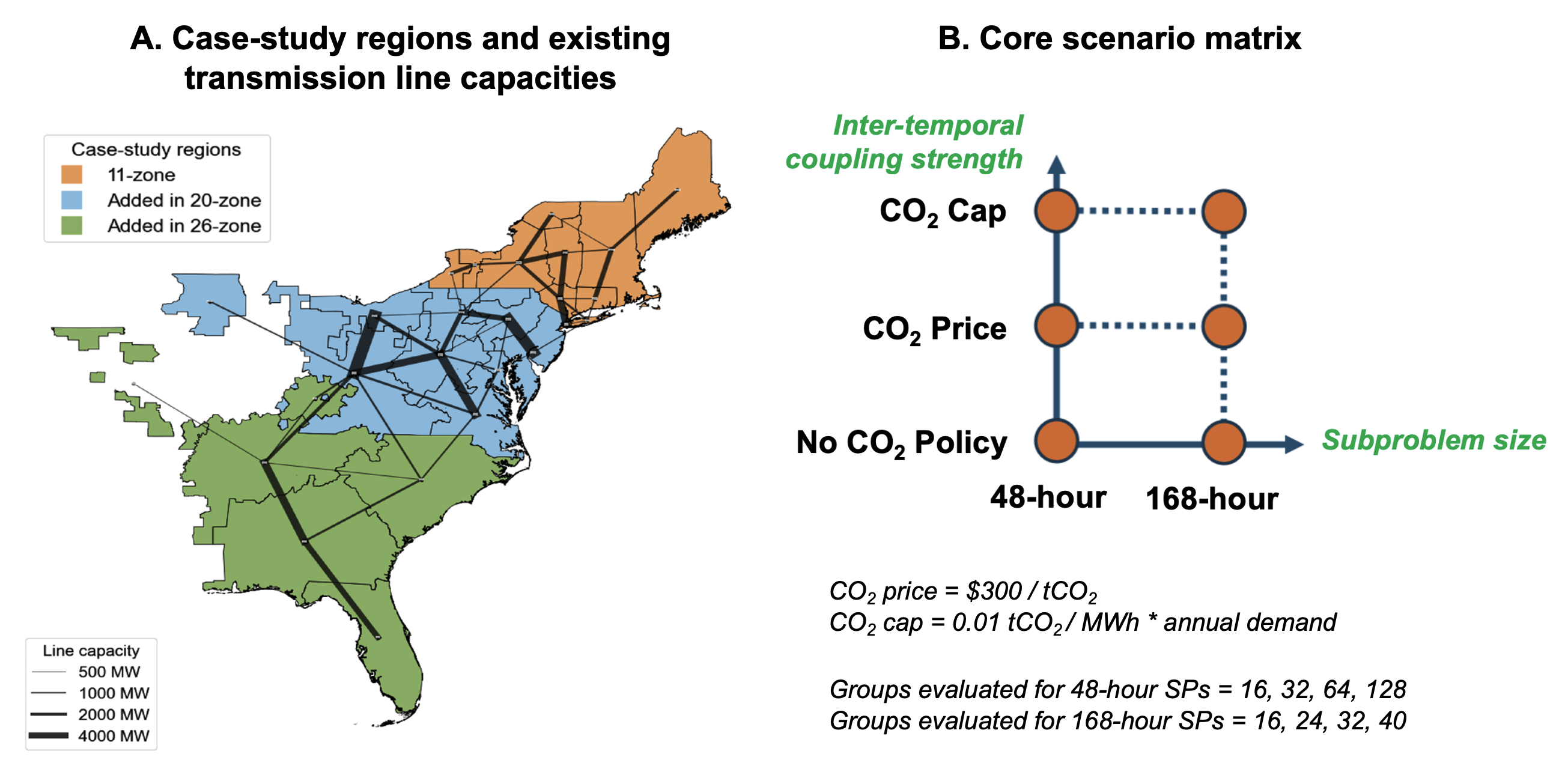}
\caption[Geographical scope and scenario design]{A) Geographical scope of the 11, 20, and 26-zone case studies based on IPM regions, with existing transmission capacities from EPA data \citep{us_epa_documentation_2018}. B) Core scenario design across SP sizes and inter-temporal coupling strengths. Grouped Benders methods were evaluated using 16, 32, 64, and 128 groups for 48-hour SPs, and 16, 24, 32, and 40 groups for 168-hour SPs.}
\label{Benders_Regions}
\end{figure}

\begin{table}[h]
\centering
\caption[Summary of Benders case studies]{Summary of the 11, 20, and 26-zone case studies evaluated in the BD analysis.}
\label{tab:ch4_case_study_summary}
\begin{tabular}{lccc}
\toprule
& \multicolumn{3}{c}{Number of zones} \\
\cmidrule(lr){2-4}
Case study & 11 & 20 & 26 \\
\midrule
Total electricity demand (TWh)              & 583   & 1{,}935 & 3{,}315 \\
Integer planning variables ($|\mathcal{I}|$)& 318   & 607     & 791     \\
Continuous variables ($\times 10^6$)        & 7.41   & 14.95    & 19.73    \\
Constraints ($\times 10^6$)                 & 13.49  & 27.21    & 35.96    \\
Number of existing generators               & 90  & 189    & 251    \\
Number of candidate generators              & 117  & 213    & 276    \\
Number of transmission lines                & 15  & 33    & 43    \\
\bottomrule
\end{tabular}
\end{table}

\subsection{Scenarios}
\label{subsec:ch4_compute}

All case studies were solved over a 52-week operational horizon (8,736~h) across various SP sizes and inter-temporal coupling strengths shown in the scenario matrix in Figure \ref{Benders_Regions}B. To study the effect of SP size on the various BD algorithms, we introduced: 1) a 48-hour SP setup with a total of $|\mathcal{S}| = 182$ temporal SPs, and 2) a 168-hour SP setup with $|\mathcal{S}| = 52$. To evaluate the effect of inter-temporal coupling strength, we considered three policy settings: 1) No CO$_2$ policy, 2) CO$_2$ price, and 3) implementing a hard annual CO$_2$ cap. 

In the no CO$_2$ policy case, SPs are only coupled via the MP planning decisions with no budgeting variables present (i.e., terms involving \(q_s\) and \(\pi_s^{(k)}\) are omitted). In the CO$_2$ price scenario, an emission penalty of \$300/tCO$_2$ is implemented via the SP objective function, and likewise does not include budgeting variables. In the CO$_2$ cap case, a hard system-wide annual emission cap constraint is implemented, where SPs are coupled through budgeting variables as described in Eq. \ref{eq:subproblem_budget} and Eq. \ref{eq:multicut_master_budget}. The annual CO$_2$ cap for each case study was defined as 0.01~tCO$_2$/MWh multiplied by the total electricity demand in Table \ref{tab:ch4_case_study_summary}. Energy storage such as batteries and hydropower storage were modeled as short-term storage, and state-of-charge is not carried across SPs. To prevent infeasible solutions across Benders iterations, slack variables with sufficiently large penalty costs were added to the electricity demand balance and CO$_2$ cap constraints. As a result, all Benders cuts generated in this study are optimality cuts.

\subsection{Implementation}
\label{subsec:ch4_implementation}

All case studies were implemented in Julia 1.10.4 using JuMP v1.25 and solved using Gurobi 12.0.3. Computational experiments were performed on the MIT Engaging high-performance computing (HPC) cluster using AMD EPYC 9654 96-core processor nodes with 384~GB of memory \citep{office_of_research_computing_and_data_mit_about_nodate}. Unless otherwise stated, SPs were distributed across CPU cores to minimize the solving of sequential SP batches subject to the total 384~GB memory limit, with the resulting CPU-core allocations summarized in Table \ref{tab:ch4_compute_setup}. In the 48-hour SP setup, the total number of 182 SPs cannot be fully parallelized within this memory limit, and CPU-core allocations were selected to give the smallest feasible integer number of SP batches: 61 cores can solve the 182 SPs in three batches for the 11 and 20-zone cases, while 46 cores can solve them in four batches for the 26-zone case. The lower core count for the 26-zone case reflects the higher memory requirement of each SP as the model size increases. In contrast, the 168-hour SP setup only has 52 SPs, which can be solved concurrently within the memory limit for all model sizes. Therefore, utilizing more than 52 CPU cores would not further improve SP parallelization.

\begin{table}[htbp]
\centering
\caption[Default CPU allocation across Benders case studies]{Default CPU-core allocation used to maximize SP parallelization across case studies for the 48-hour and 168-hour SP setups, subject to the 384~GB memory limit of the HPC nodes.}
\label{tab:ch4_compute_setup}
\vspace{6pt}
\begin{tabular}{lcccccc}
\toprule
& \multicolumn{3}{c}{48-hour SPs ($|\mathcal{S}| = 182$)} 
& \multicolumn{3}{c}{168-hour SPs ($|\mathcal{S}| = 52$)} \\
\cmidrule(lr){2-4} \cmidrule(lr){5-7}
Number of zones & 11 & 20 & 26 & 11 & 20 & 26 \\
\midrule
CPU cores & 61 & 61 & 46 & 52 & 52 & 52 \\
\bottomrule
\end{tabular}
\end{table}

Each computational run was subjected to a maximum 12~h time limit as imposed by the employed HPC cluster, and case studies exceeding this limit were classified as intractable. Convergence was defined when the Benders optimality gap falls below $\epsilon_{\mathrm{tol}} = 10^{-3}$. For all adaptive algorithms, clustering was performed every 4 Benders iterations using k-means. For consistency, a fixed random seed (\texttt{MersenneTwister}~=~42) was used for the k-means process, and clustering was repeated up to 300 times, with early stopping if no improvement was recorded after 50 consecutive restarts. For \textit{rep-SP}, all SPs were solved for an initial warm-start phase of $L_{\mathrm{warm}} = 5$ iterations before representative-SP solving was applied with clustering every 4 Benders iterations. The Gurobi solver settings were specified to be consistent with the benchmark implementation in \cite{pecci_regularized_2025}, where the barrier algorithm (\texttt{Method}~=~2) was used for both MP and SPs with \texttt{BarConvTol} set to $10^{-3}$. Crossover was enabled for SPs (\texttt{Crossover}~=~1) and disabled for MP (\texttt{Crossover}~=~0), and \texttt{Threads}~=~1 was specified for the SPs.

\section{Results}
\label{sec:ch4_results}

\subsection{Grouped-cut BD performance under no CO$_2$ policy scenario}
\label{subsec:ch4_results_no_policy}

First, we examine the computational performance of the grouped-BD formulations under the no CO$_2$ policy scenario, which does not introduce budgeting variables, across the 11, 20, and 26-zone case studies. All Benders methods achieved the same optimal objective value as the benchmark multi-cut. In addition, monolithic implementations of the original MILP CEM formulation in Eq. \ref{eq:budget_problem} were not solved within the 12~h time limit across all cases, and were considered intractable. Figure~\ref{Fig_benders_R1} compares total runtime across the benchmark multi-cut, \textit{fix-G-S}, \textit{adapt-G-S}, and \textit{adapt-G-I} formulations under the no CO$_2$ policy case.

\begin{figure}[h]
\centering
\includegraphics[width=0.75\textwidth]{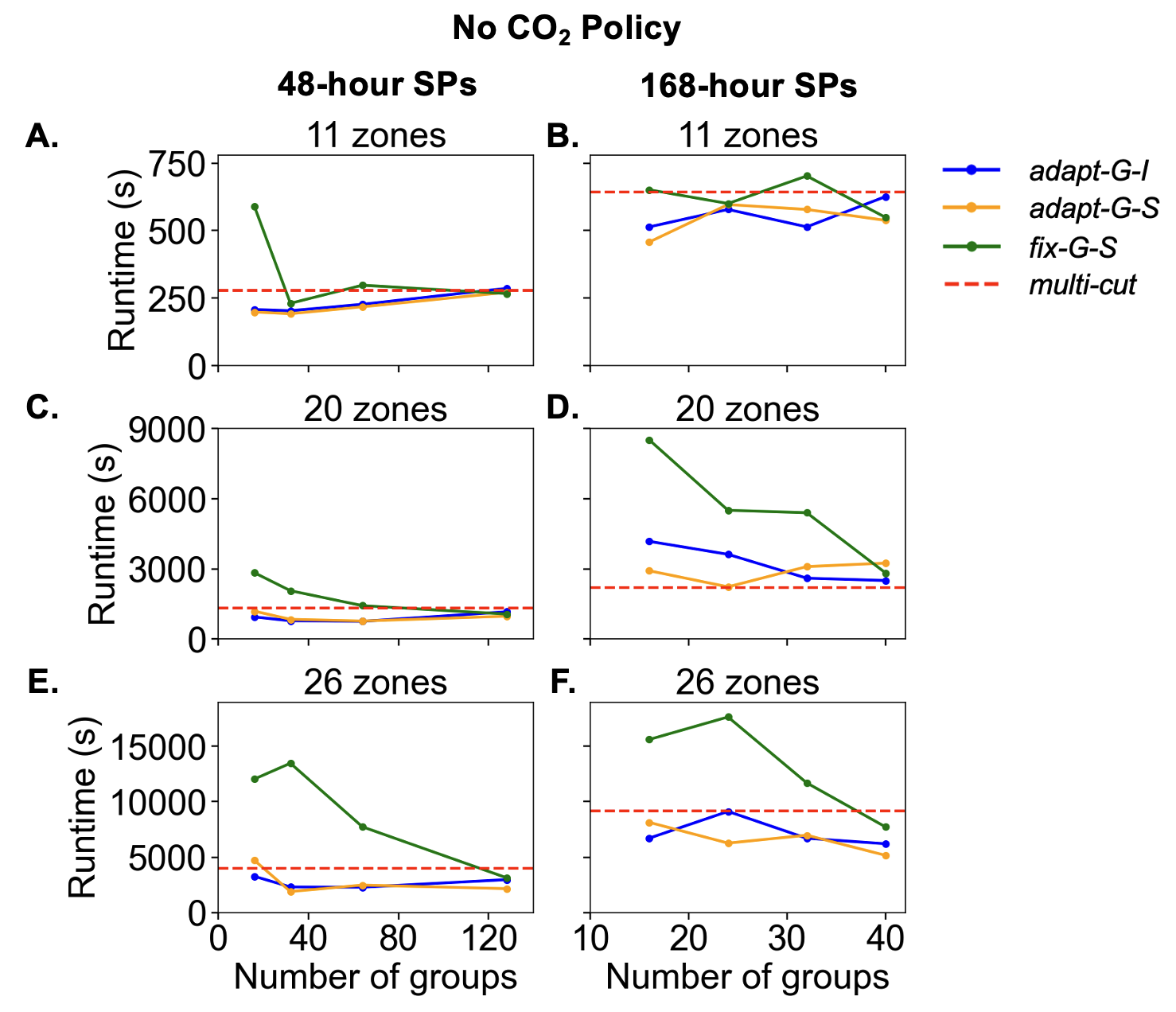}
\caption[Runtime comparison of Benders methods under no CO$_2$ policy]{Total runtime under the no CO$_2$ policy scenario for the \textit{fix-G-S}, \textit{adapt-G-S}, and \textit{adapt-G-I} formulations across 11, 20, and 26-zone systems with 48-hour and 168-hour SPs. The dashed red line shows the benchmark multi-cut runtime. Group counts follow Figure \ref{Benders_Regions}B, and additional iterations and runtime breakdowns are reported in Tables \ref{tab:si_no_co2_48h_detail}--\ref{tab:si_no_co2_168h_breakdown}.}
\label{Fig_benders_R1}
\end{figure}

Figure~\ref{Fig_benders_R1} shows that \textit{adapt-G-S} and \textit{adapt-G-I}  consistently outperform \textit{fix-G-S}, with the magnitude of runtime benefit depending on the SP size and modeled system size. In the 48-hour SP setup, the advantage of adaptive methods becomes more pronounced as system size increases. In the 11-zone case, the multi-cut benchmark required a runtime of 277~s, whereas the best-performing adaptive method (\textit{adapt-G-S} with 32 groups) reduced the runtime to 192~s, corresponding to 31\% speed up (Table \ref{tab:si_no_co2_48h_detail}). Similarly, adaptive grouping results in the fastest runtime in the 20-zone case, reducing runtime from 1322~s to 753~s, a 43\% improvement over the benchmark (Table \ref{tab:si_no_co2_48h_detail}). The largest gains occurred in the 26-zone case, where the best-performing adaptive method (\textit{adapt-G-S} with 32 groups) solved the model to optimality in 1891~s as compared to 4006~s for the benchmark, corresponding to 53\% speed up (Table \ref{tab:si_no_co2_48h_detail}). In contrast, the \textit{fix-G-S} formulation showed weaker performance and frequently underperformed the benchmark, particularly at low group counts. Across the 48-hour SP setup, both \textit{adapt-G-S} and \textit{adapt-G-I} yield similar performance, suggesting that the choice between recourse representations matters less compared to whether the grouping is updated adaptively. Compared with the 48-hour SP setup, the benefit of adaptive grouping is smaller and less consistent for the 168-hour SP setup. The best-performing adaptive formulation reduced runtime by 29\% in the 11-zone case and 44\% in the 26-zone case, but did not outperform the benchmark in the 20-zone case.

The detailed breakdown of runtime in Tables \ref{tab:si_no_co2_48h_breakdown} and \ref{tab:si_no_co2_168h_breakdown} further explains why adaptive grouping is advantageous in larger system sizes, particularly in the 48-hour SP setup. For the benchmark multi-cut formulation, MP solution accounts for an increasingly large share of total runtime as system size increases, from 21.3\% in the 11-zone case to 54.8\% in the 26-zone case (Table \ref{tab:si_no_co2_48h_breakdown}), indicating that reducing MP growth yields greater computational gains in larger systems where MP solve time dominates.

As grouped-BD methods mainly affect runtime by reducing Benders cut growth in the MP, their performance depends on the trade-off between limiting MP growth across iterations and preserving effective convergence behavior. Figure~\ref{Fig_benders_R2} highlights this trade-off by reporting iteration counts and per-iteration average MP and SP solve times for the 20-zone system under both 48-hour and 168-hour SP setups. Across all grouped-BD formulations, the per-iteration average SP solve time remains broadly similar within each setup since SPs are still solved individually, while MP solve time decreases consistently with decreasing group count, particularly in the 48-hour SP setup (Figure~\ref{Fig_benders_R2}E).

\begin{figure}[h]
\centering
\includegraphics[width=0.8\textwidth]{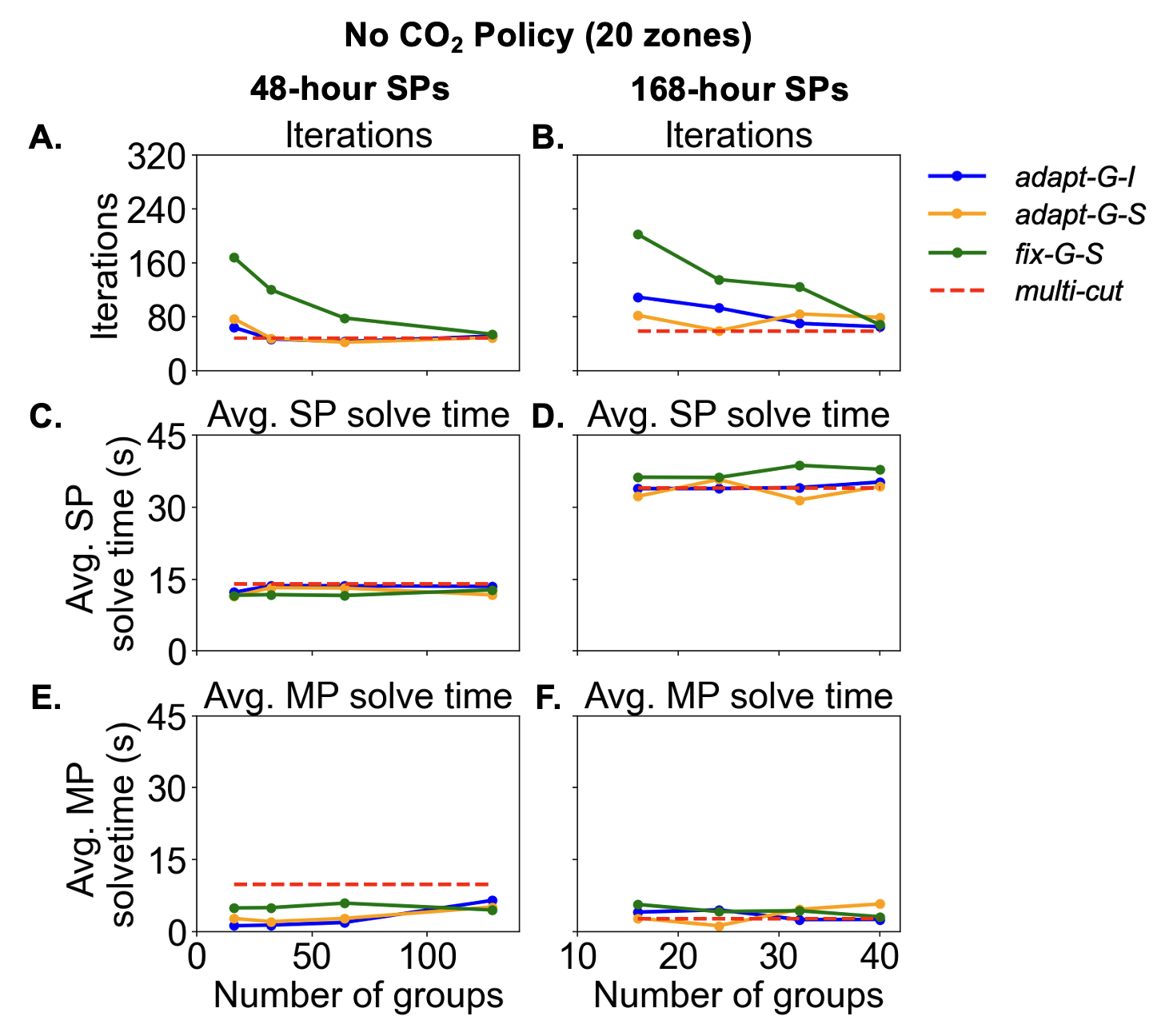}
\caption[Detailed performance of Benders methods under no CO$_2$ policy]{Total runtime, iterations, and average SP and MP solve times for the 20-zone no CO$_2$ policy case under 48-hour (A, C, E) and 168-hour (B, D, F) SP setups. The dashed red line shows the multi-cut benchmark.}
\label{Fig_benders_R2}
\end{figure}

Under the 48-hour SP setup, MP solve time accounted for 36.8\% of total runtime in the multi-cut benchmark, and represented a substantial share of computational burden (Table \ref{tab:si_no_co2_48h_breakdown}). The adaptive formulations reduce this burden substantially. For example, in \textit{adapt-G-I} with 64 groups, the average MP solve time decreases from 9.9~s in the benchmark to 1.9~s, while total iterations decrease from 49 to 43 (Table \ref{tab:si_no_co2_48h_avg_times}). In this case, adaptive grouping not only reduces MP burden but also slightly improves convergence behavior. 

As shown in Figure~\ref{Fig_benders_R2}A, iteration counts increase substantially at low number of groups (e.g., 16). This is consistent with the fact that coarser grouping places more heterogeneous SPs together, which results in each grouped cut aggregating dual variables over more diverse SPs. This could obscure meaningful differences across dual variables within each group, thus weakening the effectiveness of the grouped Benders cut, and slowing overall convergence. At intermediate group counts (e.g., 32 or 64), the grouping preserves sufficient variations in SP dual variables across groups to maintain favorable convergence performance while still limiting MP growth, yielding the best trade-off between MP solve time and iteration count. At high group counts (e.g., 128), the marginal benefit of adaptive grouping diminishes as the formulation approaches multi-cut behavior. The fact that intermediate group counts achieve similar or even lower iteration counts than the benchmark suggests that grouped cuts can preserve sufficient information for effective convergence without requiring the full set of individual SP Benders cuts. This is possible when sufficient redundancy exists across SP dual variable outcomes, as observed in the no CO$_2$ policy scenario.

Figure~\ref{Fig_benders_R2} also explains the weaker performance of the \textit{fix-G-S} formulation relative to its adaptive counterparts. Its per-iteration MP solve time reduction is smaller (Figure~\ref{Fig_benders_R2}E), and it requires substantially more iterations to converge, especially at intermediate group counts where adaptive methods perform best (Figure~\ref{Fig_benders_R2}A). These results suggest that static input-based clustering produces cuts that are less informative for approximating recourse cost than adaptive grouping based on SP dual variables, which more directly reflects the structure of individual Benders cuts.

In contrast, the 168-hour SP setup is dominated by SP solution time, with average per-iteration SP solve times substantially exceeding MP solve times across all methods (Figure~\ref{Fig_benders_R2}D and Figure~\ref{Fig_benders_R2}F). Consequently, MP solve time reductions from grouped cuts have a smaller effect on overall runtime. The 168-hour setup also contains fewer SPs, so grouped-BD methods yield a smaller absolute reduction in Benders cuts. Additionally, the longer temporal horizon may cause each SP to span a broader range of operating conditions, reducing the number of SPs with closely similar dual variables and the quality of grouped cuts. As a result, the 168-hour setup shows a less favorable trade-off between iteration counts and MP solve time reduction for adaptive grouped-cut methods (Figure~\ref{Fig_benders_R2}B and Figure~\ref{Fig_benders_R2}F), and do not outperform the benchmark multi-cut in the 20-zone case.

\subsection{Impact of CO$_2$ policy}
\label{subsec:ch4_results_co2}

\subsubsection{CO$_2$ price policy}
Figure~\ref{Fig_benders_R3}A -- F shows total runtime for the 11, 20, and 26-zone case studies under the CO$_2$ price scenario for both 48-hour and 168-hour SP setups. Since the \textit{fix-G-S} formulation consistently underperformed in the no CO$_2$ policy scenario, we focused our analysis on \textit{adapt-G-S}, \textit{adapt-G-I}, and the benchmark multi-cut BD methods. Under this scenario, \textit{adapt-G-S} and \textit{adapt-G-I} formulations generally do not outperform the benchmark, although the performance gap remains narrow in certain 48-hour SP cases.

\begin{figure}[h]
\centering
\includegraphics[width=1\textwidth]{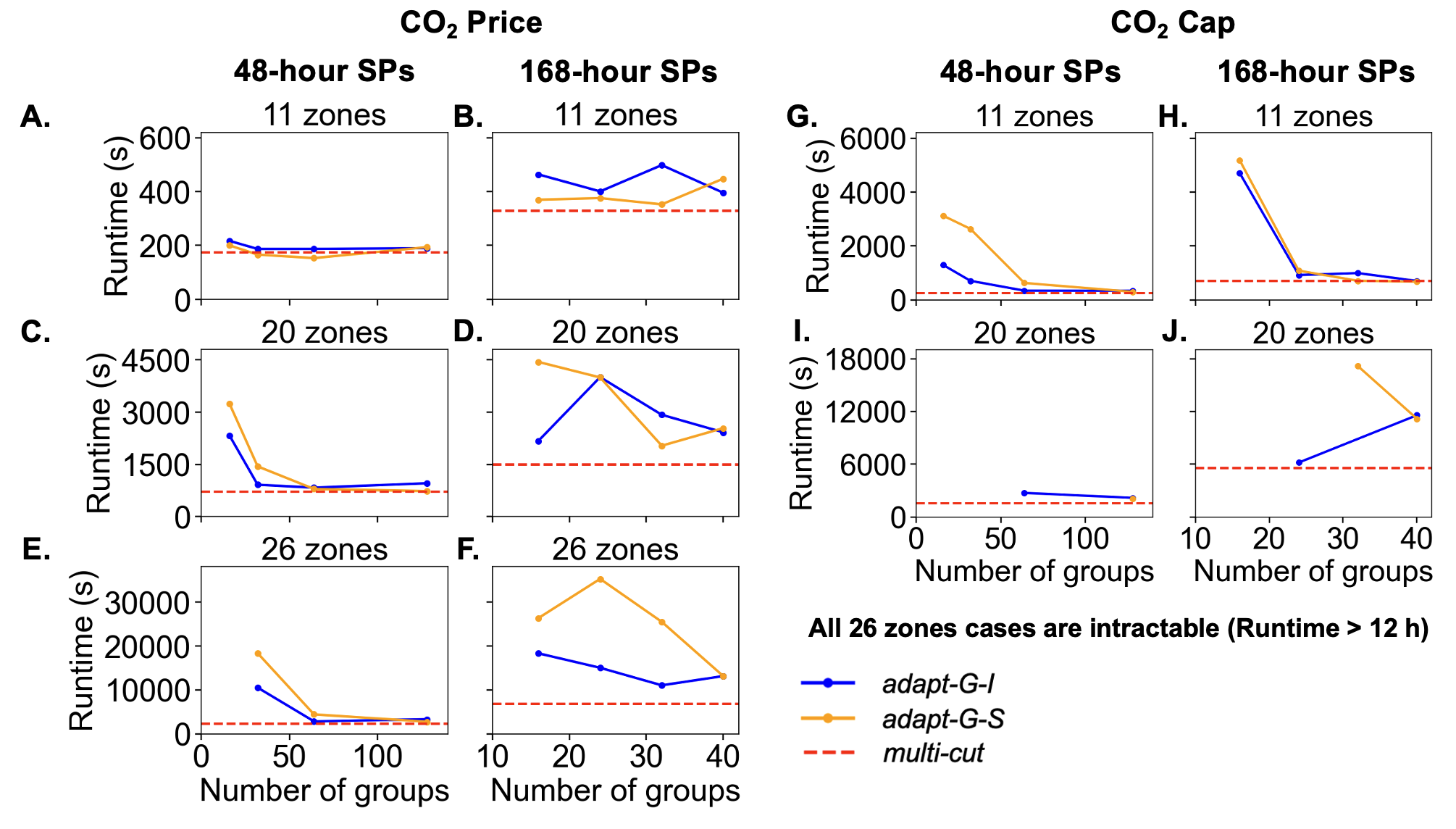}
\caption[Runtime comparison of Benders methods under CO$_2$ policies]{Total runtime under the CO$_2$ price scenario of \$300/tCO$_2$ (A--F) and CO$_2$ cap scenario (G--J) for the benchmark multi-cut, \textit{adapt-G-S}, and \textit{adapt-G-I} formulations. Intractable cases are omitted, including all 26-zone CO$_2$ cap cases. The annual CO$_2$ cap is defined as 0.01~tCO$_2$/MWh multiplied by total electricity demand. Additional iterations and runtime breakdowns are reported in Tables \ref{tab:si_co2_price_48h_detail}--\ref{tab:si_co2_cap_168h_breakdown}.}
\label{Fig_benders_R3}
\end{figure}

For the 48-hour SP setup, the adaptive grouped method outperforms the benchmark only in the 11-zone system, where \textit{adapt-G-S} reduces overall runtime from 173.8~s to 153.2~s, corresponding to an improvement of 12\%  (Table \ref{tab:si_co2_price_48h_detail}). In the 20 and 26-zone systems, the benchmark multi-cut formulation yields the fastest overall runtime (714~s and 2247~s respectively), although the best-performing adaptive grouped-cut formulation, with 128 groups, remains relatively close (731~s and 2741~s respectively). In the 168-hour SP setup, the benchmark multi-cut has a clear advantage across all three system sizes following the reasons discussed earlier (Table \ref{tab:si_co2_price_168h_detail}). As in the no CO$_2$ policy scenario, the \textit{adapt-G-S} and \textit{adapt-G-I} formulations perform similarly in the 48-hour SP setup, whereas greater variation in their relative performance is observed in the 168-hour SP setup.

Similar to the no CO$_2$ policy scenario, the performance of the adaptive grouped-cut method under the CO$_2$ price policy depends strongly on the number of groups. This is particularly evident in the 26-zone, 48-hour SP setup, where the 16-group adaptive grouped-cut formulations become intractable, whereas the corresponding no CO$_2$ policy cases remain solvable (Figure~\ref{Fig_benders_R3}E vs. Figure~\ref{Fig_benders_R1}E). Although benchmark multi-cut runtime is largely unchanged, the penalty for coarse grouping increases significantly under the CO$_2$ price (Figure~\ref{Fig_benders_R3}E vs. Figure~\ref{Fig_benders_R1}E), indicating that the CO$_2$ price worsens performance at low group counts and weakens the benefit of adaptive grouping at intermediate group counts. Consequently, larger group counts are required for adaptive grouped-cut formulations to approach, but generally not surpass, multi-cut benchmark performance.

This weaker performance stems from a less favorable trade-off between MP solve time reduction and higher iteration counts (Tables \ref{tab:si_co2_price_48h_detail} -- \ref{tab:si_co2_price_168h_avg_times}), suggesting that the CO$_2$ price affects how effectively SPs can be clustered by dual variable similarity. One possible explanation is that the CO$_2$ price penalty increases heterogeneity in SP dual variables by strengthening the dependence of the operating cost term in Eq. \ref{eq:subproblem_obj} on operating conditions. Specifically, variations in VRE availability across SPs affect the dispatch of emitting generators, and under a CO$_2$ price, this causes dual variables to a fixed planning decision to vary more strongly across SPs. This weakens grouped-cut effectiveness, requiring larger group counts to approach multi-cut benchmark performance (Figure~\ref{Fig_benders_R3}). Overall, adaptive grouped-cut formulations are less effective under the CO$_2$ price scenario.

\subsubsection{CO$_2$ cap policy}
 

Finally, in the CO$_2$ cap policy scenario, annual emissions are enforced through budgeting variables allocated across individual SPs (Eq. \ref{eq:multicut_master_budget}). Figure~\ref{Fig_benders_R3}G--J shows total runtime for the 11 and 20-zone cases under both 48-hour and 168-hour SP setups; all 26-zone cases, including the benchmark, are intractable. Runtime increases substantially across all methods, and adaptive grouped-cut formulations show no apparent advantage over the benchmark.

Performance of adaptive grouping deteriorates sharply under this scenario in both SP setups, especially at low group counts where several cases become intractable. Only the highest group counts yield runtime comparable to the benchmark across tractable cases. Among these, \textit{adapt-G-I} generally outperforms \textit{adapt-G-S}, though the improvement is limited and adaptive grouping remains much less effective overall under the CO$_2$ cap scenario.

A possible explanation is that under the CO$_2$ cap scenario, the MP determines not only the shared planning decisions \(y\) but also one budgeting variable \(q_s\) per SP, with allocations constrained to sum to the annual CO$_2$ cap (Eq. \ref{eq:multicut_master_budget}). This induces stronger SP coupling than in scenarios without budgeting variables, and the dual variables used for clustering now include multiple capacity-related duals in \(\lambda_s\) and one emission-related dual \(\pi_s\) for each SP. Since similarity in planning-related duals does not imply similarity in budget-related duals, the resulting heterogeneity reduces similarity among individual Benders cuts and the proportion of redundant cuts. Moreover, because each SP carries its own emissions budget, grouping SPs can average out the emission-specific information the MP needs to refine allocations across SPs. Together, these effects explain why adaptive grouped cuts fail to deliver meaningful MP solve time improvements under the CO$_2$ cap scenario. 

\subsection{Representative-subproblem method under constrained computational resources}
\label{subsec:ch4_results_rep_SP}
We evaluate the \textit{rep-SP} method under varying computational resources, defined by the ratio of SPs to CPUs, where high SPs-to-CPU values indicate low parallelization and vice versa. We focus on the 20-zone case with 48-hour SPs considering all three policy scenarios in two cases: 1) The same single-weather-year deterministic setup evaluated in the previous sections based on 2011 weather year, and deliberately vary the SPs-to-CPU ratio by changing the number of CPU cores allocated to the job, and 2) a stochastic case with three weather years from 2010--2012, where the maximum feasible CPU allocation is used subject to the 384~GB node memory limit.

As described in Section \ref{subsec:ch4_partial_sp}, the \textit{rep-SP} method solves the full set of SPs and adds the full set of Benders cuts only at regrouping iterations, which occur every four iterations in our numerical experiments.  In the intervening iterations, only the representative SP from each group identified by clustering the SP dual variables at the most recent regrouping step is solved, and only the corresponding Benders cuts are added to the MP.

Figure \ref{fig:partial_sp_resource_48h_20z}A--C compares total runtime across the various CO$_2$ policy scenarios for the \textit{rep-SP}, \textit{adapt-G-I}, and benchmark multi-cut formulations for the single-weather-year setup under varying SPs-to-CPU ratios. To keep the main-text comparison focused, we show \textit{adapt-G-I} with 64 groups and \textit{rep-SP} with 16 representative SPs. Figure \ref{fig:partial_sp_resource_48h_20z}D--F then compares total runtime across numbers of groups in the three-weather-year stochastic case.

\subsubsection{Single-weather-year case with varying SPs-to-CPU ratios}
\label{subsubsec:ch4_results_rep_SP_cpu}
Using the same single-weather-year deterministic setup evaluated in previous sections, we compare \textit{rep-SP},  \textit{adapt-G-I}, and the benchmark multi-cut methods under three computational settings. These settings correspond to SPs-to-CPU ratios of 3, 11, and 23, obtained by allocating 61, 16, and 8 CPU cores to the job, respectively. A higher SPs-to-CPU ratio indicates more limited parallelization, because each CPU must solve more SPs sequentially. 

\begin{figure}[h]
\centering
\includegraphics[width=0.9\textwidth]{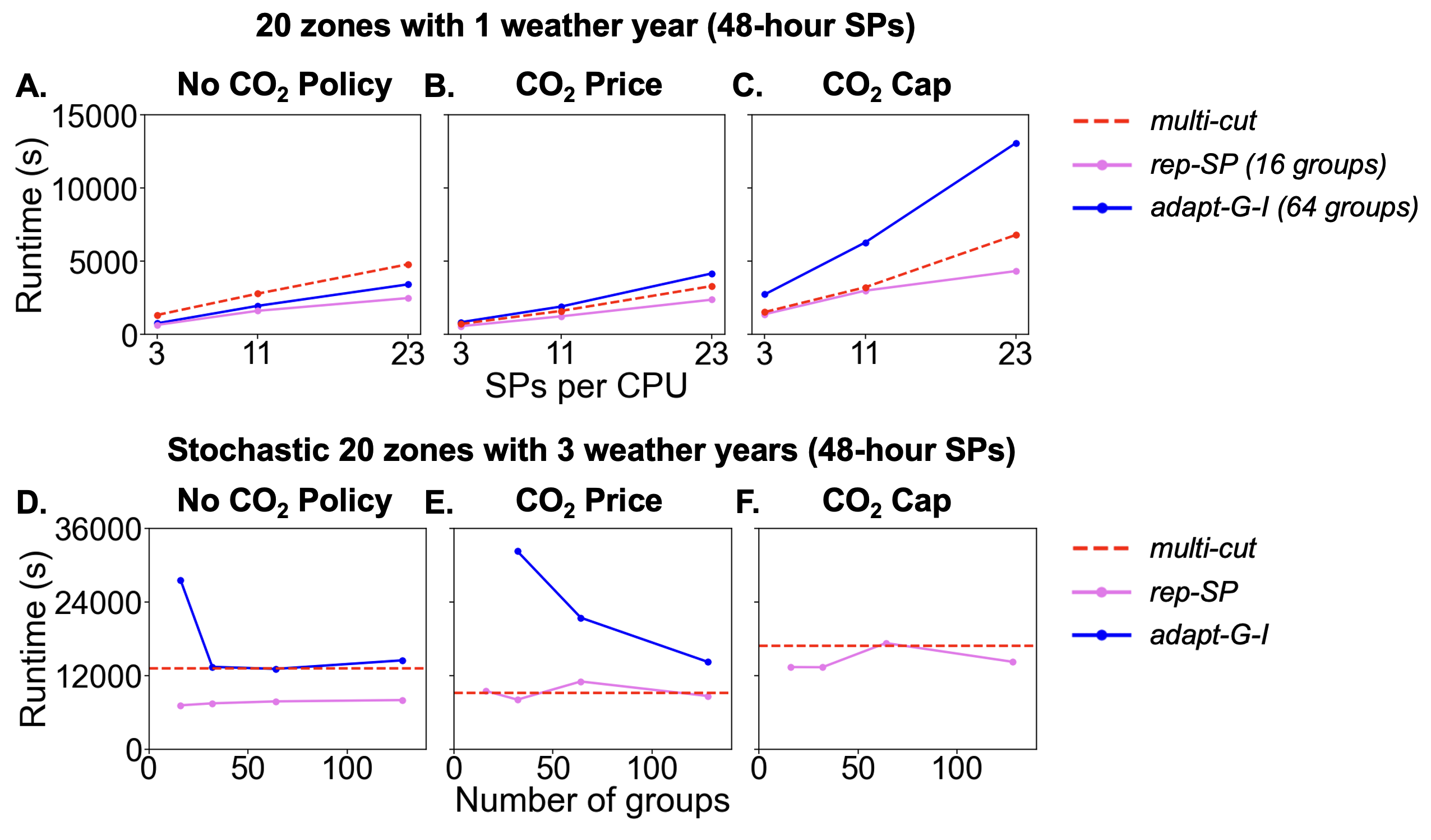}
\caption[Runtime under constrained SP parallelization]{Plots A, B, and C: Runtime of \textit{adapt-G-I} with 64 groups and \textit{rep-SP} with 16 representative SPs for the single-weather-year 20-zone case with 48-hour SPs under different SPs-to-CPU ratios under the no CO$_2$ policy, CO$_2$ price, and CO$_2$ cap cases, respectively. The 64-group \textit{adapt-G-I} case represents an intermediate grouped-cut setting, while the 16-\textit{rep-SP} case illustrates a strong SP-reduction setting evaluated. Plots D, E, and F: Runtime of \textit{adapt-G-I} and \textit{rep-SP} across different numbers of groups or representative SPs for the stochastic 20-zone case with 48-hour SPs and three weather years across CO$_2$ policy settings. The dashed red line shows the benchmark multi-cut runtime. In the CO$_2$ price scenario (Panel E), the 16-group \textit{adapt-G-I} setting is intractable. In the CO$_2$ cap scenario (Panel F), all group counts of \textit{adapt-G-I} are intractable. Additional results for the total iterations, detailed runtime breakdowns, and average component solve times are reported in Table \ref{tab:si_no_co2_48h_20z_detail} -- \ref{tab:si_stochastic_48h_20z_breakdown} in \ref{sec:benders_SI_results}.}
\label{fig:partial_sp_resource_48h_20z}
\end{figure}

As shown in Figure \ref{fig:partial_sp_resource_48h_20z}A--C, \textit{rep-SP} performs similarly to the benchmark multi-cut when SP parallelization is high, and its runtime advantage increases as the SPs-to-CPU ratio rises. At low SPs-to-CPU ratios, most SPs can be solved concurrently, thus solving only representative SPs in intermediate iterations provides limited additional benefit. As parallelization decreases, SP solution accounts for a larger share of total runtime, and reducing the number of SPs solved between full iterations leads to greater runtime savings.

While the no-policy cases (Figure \ref{fig:partial_sp_resource_48h_20z}A) show benefits of both \textit{adapt-G-I} and \textit{rep-SP} across all SPs-to-CPU ratios, \textit{adapt-G-I} performs worse in the CO$_2$ price cases (Figure \ref{fig:partial_sp_resource_48h_20z}B), with \textit{rep-SP} remaining effective particularly at higher SPs-to-CPU ratios. The CO$_2$ cap cases (Figure \ref{fig:partial_sp_resource_48h_20z}C) further show that \textit{rep-SP} is more robust than grouped cuts under strong inter-temporal coupling. When budgeting variables are present, aggregating cuts can weaken the SP-specific information needed for effective convergence, making \textit{adapt-G-I} less effective. In contrast, \textit{rep-SP} periodically solves the full SP set and adds the corresponding individual Benders cuts at regrouping iterations. These full-SP iterations preserve detailed SP-level cut information, while the intervening representative-SP iterations reduce the SP solution burden. As a result, \textit{rep-SP} remains effective under the strongly coupled CO$_2$ cap setting and outperforms the multi-cut benchmark when SP parallelization is limited.

\subsubsection{Stochastic three-weather-year case}
\label{subsubsec:ch4_results_rep_SP_stochastic}
We next evaluate the same methods in a stochastic formulation with three operational weather-year realizations for hourly solar and wind availability. In this formulation, shared investment decisions are optimized across the three weather years of system operation, which are assigned equal probability. The operational component of the objective therefore represents the expected operating cost across the three realizations, leading to a tripling in number of SPs compared to the single weather year CEM. In the CO$_2$ cap case, the annual emissions cap is enforced separately for each weather-year realization, rather than as an expected-emissions cap.

Unlike the previous SPs-to-CPU experiment, the stochastic case does not deliberately vary CPU availability. Instead, it uses the maximum feasible CPU allocation under the available node memory limit (384~GB). With three weather years, the number of 48-hour SPs increases from 182 to 546. Given the memory-constrained allocation, only 8 CPUs can be used, which corresponds to approximately 68 SPs per CPU. This is substantially more constrained than the default single-weather-year setup, which uses 61 CPUs for 182 SPs, or approximately 3 SPs per CPU. At the same time, adding weather years can introduce additional redundancy across SPs, since similar operational patterns may recur across different weather-year realizations (e.g., days in the same season). This makes the stochastic case particularly relevant for evaluating whether representative SPs can exploit redundancy as the number of operational scenarios increases.

The results in Figure \ref{fig:partial_sp_resource_48h_20z}D--F show that \textit{rep-SP} remains effective when the number of SPs increases substantially in the stochastic setup. Although the number of SPs triples from 182 to 546, the best-performing \textit{rep-SP} configurations still use a relatively small number of representative SPs: 16 in the no CO$_2$ policy case (Figure \ref{fig:partial_sp_resource_48h_20z}D)  and 32 in both the  CO$_2$ price and CO$_2$ cap cases (Figure \ref{fig:partial_sp_resource_48h_20z}E and Figure \ref{fig:partial_sp_resource_48h_20z}F). This confirms our hypothesis that considering multiple weather-year uncertainty introduces substantial redundancy across SPs, as similar seasonal operating periods can recur across different weather years.

While the no-policy cases (Figure \ref{fig:partial_sp_resource_48h_20z}D) show a clear advantage of \textit{rep-SP} over both \textit{adapt-G-I} and the benchmark multi-cut formulation, the CO$_2$ price cases (Figure \ref{fig:partial_sp_resource_48h_20z}E) show comparable performance of \textit{rep-SP} to the multi-cut benchmark, as reductions in SP solution time are partly offset by an increase in Benders iterations (\ref{tab:si_stochastic_48h_20z_avg}). Nevertheless, \textit{adapt-G-I} performs substantially worse than \textit{rep-SP} in the CO$_2$ price cases and becomes intractable under the CO$_2$ cap cases (Figure \ref{fig:partial_sp_resource_48h_20z}F). Under strong inter-temporal coupling in the CO$_2$ cap cases, \textit{rep-SP} remains computationally feasible and outperforms the benchmark multi-cut formulation under most group counts. This indicates that \textit{rep-SP} is particularly useful in stochastic settings where the number of SPs is large relative to available CPU resources.

Together, these results show that \textit{rep-SP} targets a different computational bottleneck than grouped cuts. Grouped cuts primarily reduce MP growth by limiting the number of Benders cuts. In contrast, \textit{rep-SP} reduces the number of SPs solved in intermediate iterations and also limits MP growth during those iterations, since only the Benders cuts from representative SPs are added to the MP. This makes \textit{rep-SP} especially useful when SP solution dominates total runtime, when available computational resources limit SP parallelization or when stochastic case studies substantially increase the number of SPs.

\section{Discussion}
\label{sec:ch4_discussion}

\subsection{Effectiveness of clustering-enhanced BD}

Our analysis establishes the conditions under which clustering-enhanced BD improves computational performance, as well as the conditions where its benefits become limited relative to a multi-cut benchmark formulation. In the electricity-sector capacity expansion setting considered here, grouped Benders cuts are most effective when 1) operational responses to the MP solution are sufficiently similar across SPs, and 2) MP solution constitutes a substantial share of the overall computational burden. Under these conditions, adaptive grouping can reduce the number of Benders cuts added to the MP in each iteration while maintaining convergence behavior close to the benchmark multi-cut formulation. Importantly, adaptive grouping based on clustering SP dual variables performs substantially better than fixed grouping based on exogenous time-series inputs, as dual variables more directly reflect similarities among individual Benders cuts as the planning solution evolves.

The effectiveness of adaptive grouped cuts also depends strongly on the degree of coupling between SPs. When operational decisions across SPs are weakly coupled, as in the no CO$_2$ policy scenario evaluated here, adaptive grouped cuts can yield substantial runtime improvements, indicating that relatively high degrees of Benders cut aggregation are possible in such cases. As inter-SP coupling becomes stronger, more detailed cut information is necessary to maintain efficient convergence. Adaptive grouped cuts therefore become less effective under the CO$_2$ price scenario and deteriorate further under the CO$_2$ cap scenario, where budgeting variables strongly couple SPs through a shared system-wide emissions cap. The substantial increase in overall runtime observed even for the benchmark multi-cut formulation under the CO$_2$ cap scenario further indicates that the computational challenge arises not only from the grouping strategy itself, but also from the underlying strong inter-temporal coupling induced by budgeting variables.

\subsection{Computational and decomposition insights}

The computational results also show that performance across BD methods depends strongly on the source of dominant computational burden. Adaptive grouped-cut formulations are most effective when MP solution dominates overall runtime, whereas \textit{rep-SP} strategies are most valuable when SP solution is limited by the degree of available parallelization. In particular, grouped-cut strategies are most valuable in model settings with many easy-to-evaluate SPs, where the accumulation of individual Benders cuts can substantially increase the MP size and solve time across iterations.

SP-dominant behavior can arise when individual SPs are large and expensive to solve, or when the model contains many SPs that cannot be parallelized efficiently under limited CPU and memory resources. In the latter case, reducing the number of SPs solved in each iteration can provide substantial computational benefit, as reflected in the strong performance of the \textit{rep-SP} method under constrained computational resources. The stochastic case further illustrates this point. Increasing the number of weather years substantially increases the SP computational burden, but can also create more similar operational structures for \textit{rep-SP} methods to exploit. By solving only representative SPs in intervening iterations, \textit{rep-SP} reduces computational burden while maintaining effective convergence behavior. This suggests that representative-SP methods may be particularly relevant for CEMs that incorporate operational uncertainty through scenarios, where the number of SPs can grow rapidly relative to available parallel computational resources.

In contrast, when SP runtime burden is driven by the large size of individual SPs, \textit{rep-SP} is less likely to address this bottleneck. In such cases, further decomposition of large SPs into smaller SPs may be more relevant, potentially in combination with \textit{rep-SP}. These results also highlight the importance of SP parallelization for tractability in BD, since the best-performing formulation under limited CPU resources remained slower than the worst-performing case under the default CPU setting. A further implication of this study is the increasing computational benefit of grouped Benders cuts with system size. In the MILP formulation considered here, increasing system size expands the set of planning decisions in the MP, including the number of integer variables, thus making the MP increasingly expensive to solve. As a result, MP solution accounts for a larger share of total runtime, and strategies to reduce Benders cut growth become more valuable.

Finally, the intractability of all monolithic MILP cases within the 12~h time limit reinforces the broader role of decomposition methods in enabling tractable high-resolution planning models. Taken as a whole, this study shows that no single BD strategy performs best across all cases. Rather, the preferred method depends on model structure, including overall system scale, SP temporal horizon, the degree of inter-SP coupling, and the number of operational scenarios, as well as on the available computational resources.

\subsection{Limitations and future work}

We identified several limitations in this study that point to directions for future work. First, the analysis was conducted for a single-sector electricity CEM, with cases varying inter-temporal coupling only through CO$_2$ emission policy setups. Although the main computational mechanisms identified here are unlikely to change, the adaptive grouped-cut and \textit{rep-SP} formulations may still perform differently under other optimization model structures or forms of inter-temporal coupling. In addition, the \textit{rep-SP} analysis was limited in scope, focusing only on selected cases intended to examine how constrained computational resources affect SP-side burden. Its broader performance across other SP sizes and coupling regimes therefore remains uncertain. Future work should evaluate both adaptive grouped-cut and \textit{rep-SP} methods across a wider range of model settings.

Second, budgeting variables were present only in the CO$_2$ cap scenario, where all clustering-enhanced BD algorithms performed worst, particularly the grouped-cut formulations. In this study, budgeting variables were not used for other inter-temporal constraints encountered in CEMs \citep{jacobson_computationally_2024, pecci_regularized_2025}, such as long-term energy storage dynamics or renewable share constraints. Representing such dynamics or constraints would introduce additional budgeting variables that further strengthen coupling across SPs, which may further reduce the effectiveness of both grouped-cut and multi-cut BD formulations. In addition, decarbonization studies are increasingly moving toward multi-sector energy systems that couple electricity with low-carbon hydrogen and sustainable fuels production \citep{shaker_multi-sectoral_2025, law_role_2025}. Achieving computationally feasible solutions of these models at high spatial and temporal resolution, especially when integer variables are present, may require both temporal and sectoral decomposition approaches \citep{parolin_sectoral_2026}. This could lead to substantially stronger coupling across temporal and sectoral SPs through a much larger set of linking constraints. The comparatively increased runtime observed in the CO$_2$ cap vs. no policy cases therefore points to a broader challenge for BD under budgeting-variable formulations. Future work should investigate methods to improve BD performance under such formulations, and whether alternative representations of inter-SP coupling constraints are computationally preferable.

Third, the adaptive methods developed in this study rely only on SP dual variables for clustering. While this choice is aligned with the structure of Benders cuts, it excludes other potentially useful information about SP responses, such as objective values, which likewise contribute to Benders cuts. This could matter in cases where similarity in dual variables does not fully reflect similarity across SPs. Further, in cases involving budgeting variables, the dual values associated with planning and budgeting constraints may differ substantially in both dimension and numerical scale, which can affect how well clustering identifies SPs with similar Benders cuts. Future work should therefore consider methods for scaling and weighting different types of dual variables, and for incorporating additional SP features such as objective values into the clustering procedure.

Fourth, this study only evaluates k-means clustering for adaptive grouped cuts and \textit{rep-SP} methods. Moreover, the clustering step introduced non-negligible computational overhead, particularly in the smaller models. For example, the average runtime share of clustering is approximately 12.2\% in the 11-zone case, compared to approximately 2.8\% in the 26-zone case (Table \ref{tab:si_no_co2_48h_breakdown}). Future work should therefore examine alternative clustering methods that can identify similarities among SPs more accurately and efficiently. This could include other machine-learning-based approaches \citep{barbar_representative_2022}, if they can provide faster and more effective grouping without substantially increasing overhead.

Finally, although this study includes a three-weather-year stochastic case to demonstrate the potential value of \textit{rep-SP} when SP counts increase, the analysis remains limited to a small set of equally weighted weather-year realizations. Future work should evaluate larger stochastic formulations with varying scenario probabilities, longer weather-year samples, and multi-year planning models \citep{goke_stabilized_2024, homaei_high-capture-rate_2026, sodwatana_appliance_2025}. Such settings could produce larger numbers of SPs, making full SP parallelization difficult even with significant computational resources. They could also provide clearer guidance on when increasing SP-side computational burden shifts the preferred BD strategy from grouped-cut methods toward \textit{rep-SP} methods.

\section*{Supplementary information}

Detailed computational results from all evaluated case studies, modeling input assumptions, and model implementation details are provided in the Supplementary Information.

\section*{Acknowledgments}
This research was supported by the MIT Energy Initiative. The authors gratefully acknowledge Nicole Shi for preparing the power system dataset used in the study. The authors acknowledge the MIT Office of Research Computing and Data for providing high performance computing resources that have contributed to the research results reported within this paper. The views expressed herein are solely those of the authors.

\newpage
\appendix

\renewcommand{\thealgorithm}{A\arabic{algorithm}}
\setcounter{algorithm}{0}
\setcounter{table}{0}
\setcounter{figure}{0}

\section{Additional BD formulations and algorithms}
\label{sec:benders_SI_algorithms}

\subsection{Regularized multi-cut benchmark}
\label{sec:benders_SI_algorithms_reg}

The benchmark multi-cut formulation is summarized in Algorithm \ref{alg:benchmark_multicut}. To stabilize planning decisions across iterations, the benchmark multi-cut formulation uses an interior-point level-set regularization method of Pecci \& Jenkins \cite{pecci_regularized_2025}. If convergence is not achieved after solving the updated MP in Eq. \ref{eq:multicut_master}, the following regularization problem is solved, whereby $\Phi(y,q)$ is a convex function, and $\alpha \in (0,1)$ is a level-set parameter:
\begin{subequations}
\label{eq:reg_problem}
\begin{align}
\min_{y,\{q_s\},\{\theta_s\}} \quad
& \Phi(y,q)
\label{eq:reg_problem_obj} \\
& c_I^\top y + \sum_{s\in\mathcal{S}} \theta_s
\le
LB^{(i)} + \alpha\left(UB^{(i)}-LB^{(i)}\right)
\label{eq:level_set_bound}\\
\text{s.t.} \quad
& \theta_s \ge
f_s^{(k)} + (\lambda_s^{(k)})^\top (y-y^{(k)}) + \pi_s^{(k)} (q_s-q_s^{(k)})
&& \forall s\in\mathcal{S},\;\forall k\in\{0,\dots,i\}
\label{eq:reg_problem_cut} \\
& \sum_{s\in\mathcal{S}} q_s = \bar{Q}
\label{eq:reg_problem_budget} \\
& R y \le r
\label{eq:reg_problem_plan} \\
& y \ge 0,\;\; y_j \in \mathbb{Z}
&& \forall j \in \mathcal{I}
\label{eq:reg_problem_y_domain} \\
& q_s \ge 0
&& \forall s\in\mathcal{S}
\label{eq:reg_problem_domain} \\
& \theta_s \ge 0
&& \forall s\in\mathcal{S}
\label{eq:reg_problem_theta_domain} 
\end{align}
\end{subequations}

Unless otherwise noted, $\alpha = 0.5$ is used in this study, which was found to perform best over a range of values tested in Pecci \& Jenkins \cite{pecci_regularized_2025}. In the benchmark implementation, $\Phi(y,q) = 0$ is used for non-integer formulations, in which Eq. \ref{eq:reg_problem} reduces to a feasibility problem. Solving this with a barrier method provides an interior-point solution within the feasible region of the MP in Eq. \ref{eq:multicut_master}, which reduces oscillation between extreme planning decisions across iterations and improves convergence behavior. The planning decisions obtained by solving Eq. \ref{eq:reg_problem} then replace \((y^{(i+1)}, q^{(i+1)})\), and are used to solve the SPs in the next iteration. 
When integer planning variables are present, regularization is implemented through a two-stage procedure described in Algorithm \ref{alg:two_stage_milp}. First, the BD formulation is solved under linear relaxation until convergence, in which generated Benders cuts are retained. In the second stage, integrality is enforced in the MP in Eq. \ref{eq:multicut_master}, which is solved as a MILP. The integer planning variables obtained from the MP in Eq. \ref{eq:multicut_master} are then fixed in the regularization problem Eq. \ref{eq:reg_problem}, where the regularization step only applies to continuous planning variables. This maintains the stabilization effect of the regularization step while preserving an integer-feasible planning solution. 

\begin{algorithm}[!tb]
\caption{Benchmark Regularized Multi-Cut BD \citep{pecci_regularized_2025}}
\label{alg:benchmark_multicut}
\begin{algorithmic}[1]
\REQUIRE Tolerance $\epsilon_{\mathrm{tol}}$, maximum iterations $I_{\max}$
\STATE \textbf{Initialization:}
\STATE Solve MP in Eq. \ref{eq:multicut_master} without Benders cuts to obtain 
    $(y^{(0)},q^{(0)})$
\FOR{$i = 0,\dots,I_{\max}$}
    \STATE Solve all SPs in Eq. \ref{eq:subproblem} using $(y^{(i)},q^{(i)})$
    \STATE Store Benders cut coefficients 
        $(f_s^{(i)},\lambda_s^{(i)},\pi_s^{(i)})$ for all $s\in\mathcal{S}$
    \STATE Compute $UB^{(i)}$ using Eq. \ref{eq:ub_update}
    \STATE Add Benders cuts to MP in Eq. \ref{eq:multicut_master}
    \STATE Solve the updated MP to obtain $LB^{(i)}$
    \IF{$\dfrac{UB^{(i)} - LB^{(i)}}{LB^{(i)}} \le \epsilon_{\mathrm{tol}}$}
        \STATE \textbf{Return} $(y^*,q^*)$
    \ELSE
        \STATE Solve the level-set regularization problem in Eq. \ref{eq:reg_problem} to obtain $(y^{(i+1)},q^{(i+1)})$
    \ENDIF
\ENDFOR
\end{algorithmic}
\end{algorithm}

\begin{algorithm}[H]
\caption{Benchmark Two-Stage Procedure for MILP \cite{pecci_regularized_2025}}
\label{alg:two_stage_milp}
\begin{algorithmic}[1]
\REQUIRE Tolerance $\epsilon_{\mathrm{tol}}$, maximum iterations $I_{\max}$
\STATE \textbf{Stage 1: Linear-relaxation phase}
\STATE Run Algorithm \ref{alg:benchmark_multicut} on the linear relaxation of the model until convergence
\STATE Keep all Benders cuts generated from the linear-relaxation phase
\STATE \textbf{Stage 2: MILP phase}
\STATE Run Algorithm \ref{alg:benchmark_multicut} with integrality enforced in MP in Eq. \ref{eq:multicut_master}, and fix integer planning variables $y_j$ for $j \in \mathcal{I}$ when solving the regularization problem
\end{algorithmic}
\end{algorithm}

\FloatBarrier
\newpage


\subsection{Fixed grouped-cut formulation with shared recourse}
\label{sec:benders_SI_algorithms_fix}


For the \textit{fix-G-S} formulation, we determined the SP group assignment by clustering the time-dependent input data, such as hourly VRE and demand profiles associated with each SP. This group assignment is then fixed throughout the BD iterations. Let $\mathcal{P} = \{\mathcal{S}_g\}_{g\in\mathcal{G}}$ denote a group assignment of the subproblem set $\mathcal{S}$, and each group $\mathcal{S}_g \subseteq \mathcal{S}$ contains the SPs assigned to group $g$. 
Let $\theta_g$ denote the recourse variable used to approximate the 
recourse cost of group $g \in \mathcal{G}$. The resulting MP is formulated as:
\begin{subequations}
\label{eq:fixed_grouped_master}
\begin{align}
\min \quad
& c_I^\top y + \sum_{g\in\mathcal{G}} \theta_g
\label{eq:fixed_grouped_master_obj} \\
\text{s.t.} \quad
& \theta_g \ge
\sum_{s\in\mathcal{S}_g}
\left[
f_s^{(k)} + (\lambda_s^{(k)})^\top (y-y^{(k)}) + \pi_s^{(k)} (q_s-q_s^{(k)})
\right]
&& \forall g\in\mathcal{G},\;\forall k\in\{0,\dots,i\}
\label{eq:fixed_grouped_master_cut} \\
& \sum_{s\in\mathcal{S}} q_s = \bar{Q}
\label{eq:fixed_grouped_master_budget} \\
& R y \le r
\label{eq:fixed_grouped_master_plan} \\
& y \ge 0,\;\; y_j \in \mathbb{Z}
&& \forall j \in \mathcal{I}
\label{eq:fixed_grouped_master_y_domain} \\
& q_s \ge 0
&& \forall s\in\mathcal{S}
\label{eq:fixed_grouped_master_domain}\\
& \theta_g \ge 0
&& \forall g\in\mathcal{G}
\label{eq:fixed_grouped_theta_domain}
\end{align}
\end{subequations}
Each constraint in Eq. \ref{eq:fixed_grouped_master_cut} is a grouped Benders cut constructed by summing Benders cut coefficients $(f_s^{(k)}, \lambda_s^{(k)}, \pi_s^{(k)})$ obtained from the SP solutions within $s \in \mathcal{S}_g$ for each Benders iteration $k \in \{0,\dots,i\}$.

Compared to the benchmark multi-cut BD formulation, the \textit{fix-G-S} formulation reduces both the number of MP recourse variables and the number of added cuts per BD iteration from $|\mathcal{S}|$  to $|\mathcal{G}|$. Since the group assignment remains unchanged, previously generated grouped cuts remain mathematically valid across all Benders iterations.

The \textit{fix-G-S} procedure is summarized in Algorithm \ref{alg:fixed_grouped} with the mathematical validity of the formulation established in \ref{sec:benders_SI_validity_fixed}.

\begin{algorithm}[H]
\caption{Fixed grouped-cut BD with shared recourse (\textit{fix-G-S})}
\label{alg:fixed_grouped}
\begin{algorithmic}[1]
\REQUIRE Tolerance $\epsilon_{\mathrm{tol}}$, maximum iterations $I_{\max}$
\STATE \textbf{Initialization:}
\STATE Construct fixed SP group assignments $\{\mathcal{S}_g\}_{g\in\mathcal{G}}$ 
    by clustering input time-series data associated with the SPs
\STATE Solve MP in Eq. \ref{eq:fixed_grouped_master} without Benders cuts to obtain 
    $(y^{(0)},q^{(0)})$
\FOR{$i = 0,\dots,I_{\max}$}
    \STATE Solve all SPs in Eq. \ref{eq:subproblem} using $(y^{(i)},q^{(i)})$
    \STATE Compute $UB^{(i)}$ using Eq. \ref{eq:ub_update}
    \STATE For each $g \in \mathcal{G}$, construct one grouped Benders cut of 
        the form in Eq. \ref{eq:fixed_grouped_master_cut} using the current 
        subproblem-level cut coefficients $(f_s^{(i)},\lambda_s^{(i)},
        \pi_s^{(i)})$ over all $s \in \mathcal{S}_g$
    \STATE Add the resulting grouped Benders cuts to MP in
        Eq. \ref{eq:fixed_grouped_master}
    \STATE Solve the updated MP to obtain $LB^{(i)}$
    \IF{$\dfrac{UB^{(i)} - LB^{(i)}}{LB^{(i)}} \le \epsilon_{\mathrm{tol}}$}
        \STATE \textbf{Return} $(y^*,q^*)$
    \ELSE
        \STATE Solve the level-set regularization problem in Eq. \ref{eq:reg_problem} with grouped cuts to obtain $(y^{(i+1)},q^{(i+1)})$
    \ENDIF
\ENDFOR
\end{algorithmic}
\end{algorithm}

\FloatBarrier
\newpage

\subsection{Adaptive grouped-cut formulation with shared recourse}
\label{sec:benders_SI_algorithms_adapt_shared}

In the \textit{adapt-G-S} formulation, each group has a single recourse variable $\theta_g$ approximating the total recourse cost of its assigned SPs. The key difference from the \textit{fix-G-S} formulation is that group assignments are updated periodically during BD iterations by re-clustering SPs based on their dual variables $(\lambda_s^{(k)},\pi_s^{(k)})$. Let $\mathcal{P}^{(i)} = \{\mathcal{S}_g^{(i)}\}_{g\in\mathcal{G}^{(i)}}$ denote the SP grouping used to generate grouped-cuts at Benders iteration $i$, and let 
$\theta_g$ denote the recourse variable used to approximate the recourse cost for group $g \in \mathcal{G}^{(i)}$.

When the group assignment changes during clustering iterations, grouped cuts generated in previous iterations based on previous group assignments are not directly compatible with the current group assignment. To preserve mathematical validity, historical grouped cuts are reconstructed by re-aggregating the stored Benders cut coefficients $(f_s^{(k)}, \lambda_s^{(k)}, \pi_s^{(k)})$ for all iterations $k \in \{0,\dots,i\}$ under the current group assignment $\mathcal{P}^{(i)}$. The MP solved at iteration $i$ is therefore:
\begin{subequations}
\label{eq:adaptive_grouped_master}
\begin{align}
\min \quad
& c_I^\top y + \sum_{g\in\mathcal{G}^{(i)}} \theta_g
\label{eq:adaptive_grouped_master_obj} \\
\text{s.t.} \quad
& \theta_g \ge
\sum_{s\in\mathcal{S}_g^{(i)}}
\left[
f_s^{(k)} + (\lambda_s^{(k)})^\top (y-y^{(k)}) + \pi_s^{(k)} (q_s-q_s^{(k)})
\right]
&& \forall g\in\mathcal{G}^{(i)},\;\forall k\in\{0,\dots,i\}
\label{eq:adaptive_grouped_master_cut} \\
& \sum_{s\in\mathcal{S}} q_s = \bar{Q}
\label{eq:adaptive_grouped_master_budget} \\
& R y \le r
\label{eq:adaptive_grouped_master_plan} \\
& y \ge 0,\;\; y_j \in \mathbb{Z}
&& \forall j \in \mathcal{I}
\label{eq:adaptive_grouped_master_y_domain} \\
& q_s \ge 0
&& \forall s\in\mathcal{S}
\label{eq:adaptive_grouped_master_domain} \\
& \theta_g \ge 0
&& \forall g\in\mathcal{G}^{(i)}
\label{eq:adaptive_grouped_theta_domain}
\end{align}
\end{subequations}

The \textit{adapt-G-S} procedure, including the reconstruction of historical grouped cuts, is summarized in Algorithm \ref{alg:adaptive_grouped}, with the mathematical validity of the formulation established in \ref{sec:benders_SI_validity_adaptive_shared}.

\begin{algorithm}[H]
\caption{Adaptive grouped-cut BD with shared recourse (\textit{adapt-G-S})}
\label{alg:adaptive_grouped}
\begin{algorithmic}[1]
\REQUIRE Tolerance $\epsilon_{\mathrm{tol}}$, maximum iterations $I_{\max}$, 
    regrouping frequency
\STATE \textbf{Initialization:}
\STATE Solve MP in Eq. \ref{eq:adaptive_grouped_master} without Benders cuts to 
    obtain $(y^{(0)},q^{(0)})$
\FOR{$i = 0,\dots,I_{\max}$}
    \STATE Solve all SPs in Eq. \ref{eq:subproblem} using $(y^{(i)},q^{(i)})$
    \STATE Store the subproblem-level cut coefficients 
        $(f_s^{(i)},\lambda_s^{(i)},\pi_s^{(i)})$ for all $s\in\mathcal{S}$
    \STATE Compute $UB^{(i)}$ using Eq. \ref{eq:ub_update}
    \IF{$i = 0$ \textbf{or} $i$ is a regrouping iteration}
        \STATE Construct the current group assignment $\mathcal{P}^{(i)} = 
        \{\mathcal{S}_g^{(i)}\}_{g\in\mathcal{G}^{(i)}}$ by clustering 
        the SPs based on their optimal dual variables $(\lambda_s^{(i)},\pi_s^{(i)})$
        \STATE Reconstruct all historical grouped cuts under $\mathcal{P}^{(i)}$ 
            by re-aggregating stored coefficients for all $k \in \{0,\dots,i\}$
        \STATE Replace all cuts in MP in Eq. \ref{eq:adaptive_grouped_master} with 
            the reconstructed grouped cuts
    \ELSE
        \STATE Construct one new grouped cut for each $g \in \mathcal{G}^{(i)}$ 
            using the current coefficients $(f_s^{(i)},\lambda_s^{(i)},\pi_s^{(i)})$ 
            and add to MP in Eq. \ref{eq:adaptive_grouped_master}
    \ENDIF
    \STATE Solve the updated MP to obtain $LB^{(i)}$
    \IF{$\dfrac{UB^{(i)} - LB^{(i)}}{LB^{(i)}} \le \epsilon_{\mathrm{tol}}$}
        \STATE \textbf{Return} $(y^*,q^*)$
    \ELSE
        \STATE Solve the level-set regularization problem in Eq. \ref{eq:reg_problem} with grouped cuts to obtain $(y^{(i+1)},q^{(i+1)})$
    \ENDIF
\ENDFOR
\end{algorithmic}
\end{algorithm}

\newpage
\subsection{Adaptive representative-subproblem algorithm}
\label{sec:benders_SI_algorithms_rep_sp}

The \textit{rep-SP} method is summarized in Algorithm \ref{alg:partial_sp}.

\begin{algorithm}[!tb]
\caption{Adaptive representative-SP BD (\textit{rep-SP})}
\label{alg:partial_sp}
\begin{algorithmic}[1]
\REQUIRE Tolerance $\epsilon_{\mathrm{tol}}$, maximum iterations $I_{\max}$, 
    regrouping frequency, warm-start length $L_{\mathrm{warm}}$
\STATE \textbf{Initialization:}
\STATE Solve MP in Eq. \ref{eq:multicut_master} without Benders cuts to obtain 
    $(y^{(0)},q^{(0)})$
\FOR{$i = 0,\dots,I_{\max}$}
    \IF{$i < L_{\mathrm{warm}}$ \textbf{or} $i$ is a regrouping iteration}
        \STATE Solve all SPs in Eq. \ref{eq:subproblem} using $(y^{(i)},q^{(i)})$
        \STATE Store the SP-level cut coefficients 
            $(f_s^{(i)},\lambda_s^{(i)},\pi_s^{(i)})$ for all $s\in\mathcal{S}$
        \STATE Compute $UB^{(i)}$ using Eq. \ref{eq:ub_update}
        \IF{$i$ is a regrouping iteration}
            \STATE Update the group assignment $\mathcal{P}^{(i)} = 
                \{\mathcal{S}_g^{(i)}\}_{g\in\mathcal{G}^{(i)}}$ by clustering 
                the SPs based on their optimal dual variables $(\lambda_s^{(i)},\pi_s^{(i)})$, 
                and identify one representative SP for each group
        \ENDIF
        \STATE Add Benders cuts from all solved SPs to MP in
            Eq. \ref{eq:multicut_master}
        \STATE Solve the updated MP to obtain $LB^{(i)}$
        \IF{$\dfrac{UB^{(i)} - LB^{(i)}}{LB^{(i)}} \le \epsilon_{\mathrm{tol}}$}
            \STATE \textbf{Return} $(y^*,q^*)$
        \ELSE
            \STATE Solve the level-set regularization problem described in \ref{sec:benders_SI_algorithms_reg} to obtain $(y^{(i+1)},q^{(i+1)})$
        \ENDIF
    \ELSE
        \STATE Solve only the representative SPs from the current group assignment 
            $\mathcal{P}^{(i)}$ using $(y^{(i)},q^{(i)})$
        \STATE Set $UB^{(i)} \gets UB^{(i_{\mathrm{full}})}$, where 
            $i_{\mathrm{full}}$ denotes the most recent full-solve iteration
        \STATE Add Benders cuts from the representative SPs to MP
            in Eq. \ref{eq:multicut_master}
        \STATE Solve the updated MP to obtain $LB^{(i)}$
        \IF{$\dfrac{UB^{(i)} - LB^{(i)}}{LB^{(i)}} \le \epsilon_{\mathrm{tol}}$}
            \STATE Trigger a full-solve iteration at $i+1$ to certify convergence
        \ENDIF
        \STATE Solve the level-set regularization problem described in \ref{sec:benders_SI_algorithms_reg} to obtain $(y^{(i+1)},q^{(i+1)})$
    \ENDIF
\ENDFOR
\end{algorithmic}
\end{algorithm}

\section{Additional Computational Results}
\label{sec:benders_SI_results}

This section provides additional computational results for the BD case studies in this paper. The tables here report runtime, iterations, per-iteration average MP and SP solve times, and runtime breakdowns for the case studies summarized in Table \ref{tab:ch4_case_study_summary}, across various subproblem sizes and policy settings (Tables~\ref{tab:si_no_co2_48h_detail}--\ref{tab:si_co2_cap_168h_breakdown}), computational resource constraints (Tables \ref{tab:si_no_co2_48h_20z_detail}--\ref{tab:si_co2_cap_48h_20z_breakdown}), and stochastic cases using 3 weather years (Tables \ref{tab:si_stochastic_48h_20z_detail}--\ref{tab:si_stochastic_48h_20z_breakdown}).

\begin{landscape}
\begin{table}[p]
\centering
\caption[Detailed computational results (no-policy scenario with 48-hour SPs)]{Detailed computational results for the no CO$_2$ policy case with 48-hour SPs.}
\label{tab:si_no_co2_48h_detail}
\scriptsize
\setlength{\tabcolsep}{4pt}

\end{table}
\end{landscape}


\begin{landscape}
\begin{table}[p]
\centering
\caption[Detailed computational results (no-policy scenario with 168-hour SPs)]{Detailed computational results for the no CO$_2$ policy case with 168-hour SPs.}
\label{tab:si_no_co2_168h_detail}
\scriptsize
\setlength{\tabcolsep}{4pt}
%
\end{table}
\end{landscape}


\begin{landscape}
\begin{table}[p]
\centering
\caption[Average SP and MP solve times (no-policy scenario with 48-hour SPs)]{Average SP and MP solve times for the no CO$_2$ policy case with 48-hour SPs.}
\label{tab:si_no_co2_48h_avg_times}
\scriptsize
\setlength{\tabcolsep}{4pt}
%
\end{table}
\end{landscape}


\begin{landscape}
\begin{table}[p]
\centering
\caption[Average SP and MP solve times (no-policy scenario with 168-hour SPs)]{Average SP and MP solve times for the no CO$_2$ policy case with 168-hour SPs.}
\label{tab:si_no_co2_168h_avg_times}
\scriptsize
\setlength{\tabcolsep}{4pt}
%
\end{table}
\end{landscape}


\begin{landscape}
\begin{table}[p]
\centering
\caption[Runtime breakdown (no-policy scenario with 48-hour SPs)]{Breakdown of total runtime for the no CO$_2$ policy case with 48-hour subproblems.}
\label{tab:si_no_co2_48h_breakdown}
\scriptsize
\setlength{\tabcolsep}{4pt}
%
\end{table}
\end{landscape}


\begin{landscape}
\begin{table}[p]
\centering
\caption[Runtime breakdown (no-policy scenario with 168-hour SPs)]{Breakdown of total runtime for the no CO$_2$ policy case with 168-hour SPs.}
\label{tab:si_no_co2_168h_breakdown}
\scriptsize
\setlength{\tabcolsep}{4pt}
%
\end{table}
\end{landscape}



\begin{landscape}
\begin{table}[p]
\centering
\caption[Detailed computational results (CO$_2$-price scenario with 48-hour SPs)]{Detailed computational results for the CO$_2$ price case with 48-hour SPs.}
\label{tab:si_co2_price_48h_detail}
\scriptsize
\setlength{\tabcolsep}{4pt}
%
\end{table}
\end{landscape}


\begin{landscape}
\begin{table}[p]
\centering
\caption[Detailed computational results (CO$_2$-price scenario with 168-hour SPs)]{Detailed computational results for the CO$_2$ price case with 168-hour SPs.}
\label{tab:si_co2_price_168h_detail}
\scriptsize
\setlength{\tabcolsep}{4pt}
%
\end{table}
\end{landscape}


\begin{landscape}
\begin{table}[p]
\centering
\caption[Average SP and MP solve times (CO$_2$-price scenario with 48-hour SPs)]{Average SP and MP solve times for the CO$_2$ price case with 48-hour SPs.}
\label{tab:si_co2_price_48h_avg_times}
\scriptsize
\setlength{\tabcolsep}{4pt}
%
\end{table}
\end{landscape}


\begin{landscape}
\begin{table}[p]
\centering
\caption[Average SP and MP solve times (CO$_2$-price scenario with 168-hour SPs)]{Average SP and MP solve times for the CO$_2$ price case with 168-hour SPs.}
\label{tab:si_co2_price_168h_avg_times}
\scriptsize
\setlength{\tabcolsep}{4pt}
%
\end{table}
\end{landscape}


\begin{landscape}
\begin{table}[p]
\centering
\caption[Runtime breakdown (CO$_2$-price scenario with 48-hour SPs)]{Breakdown of total runtime for the CO$_2$ price case with 48-hour SPs.}
\label{tab:si_co2_price_48h_breakdown}
\scriptsize
\setlength{\tabcolsep}{4pt}
%
\end{table}
\end{landscape}


\begin{landscape}
\begin{table}[p]
\centering
\caption[Runtime breakdown (CO$_2$-price scenario with 168-hour SPs)]{Breakdown of total runtime for the CO$_2$ price case with 168-hour SPs.}
\label{tab:si_co2_price_168h_breakdown}
\scriptsize
\setlength{\tabcolsep}{4pt}
%
\end{table}
\end{landscape}


\begin{landscape}
\begin{table}[p]
\centering
\caption[Detailed computational results (CO$_2$-cap scenario with 48-hour SPs)]{Detailed computational results for the CO$_2$ cap case with 48-hour SPs.}
\label{tab:si_co2_cap_48h_detail}
\scriptsize
\setlength{\tabcolsep}{4pt}
%
\end{table}
\end{landscape}


\begin{landscape}
\begin{table}[p]
\centering
\caption[Detailed computational results (CO$_2$-cap scenario with 168-hour SPs)]{Detailed computational results for the CO$_2$ cap case with 168-hour SPs.}
\label{tab:si_co2_cap_168h_detail}
\scriptsize
\setlength{\tabcolsep}{4pt}
%
\end{table}
\end{landscape}


\begin{landscape}
\begin{table}[p]
\centering
\caption[Average SP and MP solve times (CO$_2$-cap scenario with 48-hour SPs)]{Average SP and MP solve times for the CO$_2$ cap case with 48-hour SPs.}
\label{tab:si_co2_cap_48h_avg_times}
\scriptsize
\setlength{\tabcolsep}{4pt}
%
\end{table}
\end{landscape}


\begin{landscape}
\begin{table}[p]
\centering
\caption[Average SP and MP solve times (CO$_2$-cap scenario with 168-hour SPs)]{Average SP and MP solve times for the CO$_2$ cap case with 168-hour SPs.}
\label{tab:si_co2_cap_168h_avg_times}
\scriptsize
\setlength{\tabcolsep}{4pt}
%
\end{table}
\end{landscape}


\begin{landscape}
\begin{table}[p]
\centering
\caption[Runtime breakdown (CO$_2$-cap scenario with 48-hour SPs)]{Breakdown of total runtime for the CO$_2$ cap case with 48-hour SPs.}
\label{tab:si_co2_cap_48h_breakdown}
\scriptsize
\setlength{\tabcolsep}{4pt}
%
\end{table}
\end{landscape}


\begin{landscape}
\begin{table}[p]
\centering
\caption[Runtime breakdown (CO$_2$-cap scenario with 168-hour SPs)]{Breakdown of total runtime for the CO$_2$ cap case with 168-hour SPs.}
\label{tab:si_co2_cap_168h_breakdown}
\scriptsize
\setlength{\tabcolsep}{4pt}
%
\end{table}
\end{landscape}

\begin{landscape}
\begin{table}[p]
\centering
\caption[Detailed computational results across SPs-to-CPU ratios for the 20-zone no-policy case with 48-hour SPs]{Detailed computational results for the 20-zone no CO$_2$ policy case with 48-hour SPs under different SPs-to-CPU ratios.}
\label{tab:si_no_co2_48h_20z_detail}
\scriptsize
\setlength{\tabcolsep}{4pt}
%
\end{table}
\end{landscape}

\begin{landscape}
\begin{table}[p]
\centering
\caption[Average SP and MP solve times across SPs-to-CPU ratios for the 20-zone no-policy case with 48-hour SPs]{Average SP and MP solve times for the 20-zone no CO$_2$ policy case with 48-hour SPs under different SPs-to-CPU ratios.}
\label{tab:si_no_co2_48h_20z_avg}
\scriptsize
\setlength{\tabcolsep}{4pt}
%
\end{table}
\end{landscape}

\begin{landscape}
\begin{table}[p]
\centering
\caption[Runtime breakdown across SPs-to-CPU ratios for the 20-zone no-policy case with 48-hour SPs]{Breakdown of total runtime for the 20-zone no CO$_2$ policy case with 48-hour SPs under different SPs-to-CPU ratios.}
\label{tab:si_no_co2_48h_20z_breakdown}
\scriptsize
\setlength{\tabcolsep}{4pt}
%
\end{table}
\end{landscape}

\begin{landscape}
\begin{table}[p]
\centering
\caption[Detailed computational results across SPs-to-CPU ratios for the 20-zone CO$_2$-price case with 48-hour SPs]{Detailed computational results for the 20-zone CO$_2$ price case with 48-hour SPs under different SPs-to-CPU ratios.}
\label{tab:si_co2_price_48h_20z_detail}
\scriptsize
\setlength{\tabcolsep}{4pt}

\end{table}
\end{landscape}

\begin{landscape}
\begin{table}[p]
\centering
\caption[Average SP and MP solve times across SPs-to-CPU ratios for the 20-zone CO$_2$-price case with 48-hour SPs]{Average SP and MP solve times for the 20-zone CO$_2$ price case with 48-hour SPs under different SPs-to-CPU ratios.}
\label{tab:si_co2_price_48h_20z_avg}
\scriptsize
\setlength{\tabcolsep}{4pt}
%
\end{table}
\end{landscape}

\begin{landscape}
\begin{table}[p]
\centering
\caption[Runtime breakdown across SPs-to-CPU ratios for the 20-zone CO$_2$-price case with 48-hour SPs]{Breakdown of total runtime for the 20-zone CO$_2$ price case with 48-hour SPs under different SPs-to-CPU ratios.}
\label{tab:si_co2_price_48h_20z_breakdown}
\scriptsize
\setlength{\tabcolsep}{4pt}
%
\end{table}
\end{landscape}

\begin{landscape}
\begin{table}[p]
\centering
\caption[Detailed computational results across SPs-to-CPU ratios for the 20-zone CO$_2$-cap case with 48-hour SPs]{Detailed computational results for the 20-zone CO$_2$ cap case with 48-hour SPs under different SPs-to-CPU ratios.}
\label{tab:si_co2_cap_48h_20z_detail}
\scriptsize
\setlength{\tabcolsep}{4pt}
%
\end{table}
\end{landscape}

\begin{landscape}
\begin{table}[p]
\centering
\caption[Average SP and MP solve times across SPs-to-CPU ratios for the 20-zone CO$_2$-cap case with 48-hour SPs]{Average SP and MP solve times for the 20-zone CO$_2$ cap case with 48-hour SPs under different SPs-to-CPU ratios.}
\label{tab:si_co2_cap_48h_20z_avg}
\scriptsize
\setlength{\tabcolsep}{4pt}
%
\end{table}
\end{landscape}

\begin{landscape}
\begin{table}[p]
\centering
\caption[Runtime breakdown across SPs-to-CPU ratios for the 20-zone CO$_2$-cap case with 48-hour SPs]{Breakdown of total runtime for the 20-zone CO$_2$ cap case with 48-hour SPs under different SPs-to-CPU ratios.}
\label{tab:si_co2_cap_48h_20z_breakdown}
\scriptsize
\setlength{\tabcolsep}{4pt}
%
\end{table}
\end{landscape}

\begin{landscape}
\begin{table}[p]
\centering
\caption[Detailed computational results for the stochastic 20-zone case with 48-hour SPs]{Detailed computational results for the stochastic 20-zone case with 48-hour SPs using three weather years.}
\label{tab:si_stochastic_48h_20z_detail}
\scriptsize
\setlength{\tabcolsep}{4pt}
%
\end{table}
\end{landscape}

\begin{landscape}
\begin{table}[p]
\centering
\caption[Average SP and MP solve times for the stochastic 20-zone case with 48-hour SPs]{Average SP and MP solve times for the stochastic 20-zone case with 48-hour SPs using three weather years.}
\label{tab:si_stochastic_48h_20z_avg}
\scriptsize
\setlength{\tabcolsep}{4pt}
%
\end{table}
\end{landscape}

\begin{landscape}
\begin{table}[p]
\centering
\caption[Runtime breakdown for the stochastic 20-zone case with 48-hour SPs]{Breakdown of total runtime for the stochastic 20-zone case with 48-hour SPs using three weather years.}
\label{tab:si_stochastic_48h_20z_breakdown}
\scriptsize
\setlength{\tabcolsep}{4pt}
%
\end{table}
\end{landscape}

\section{Mathematical validity of grouped-cut BD methods}
\label{sec:benders_SI_validity}

\subsection{Fixed grouped-cut formulation (shared recourse)}
\label{sec:benders_SI_validity_fixed}

\begin{proposition}[Validity of the fixed grouped-cut formulation with shared recourse]
\label{prop:fixed_grouped_validity}
Let $\mathcal{P} = \{\mathcal{S}_g\}_{g\in\mathcal{G}}$ denote a fixed group assignment 
of the subproblem set $\mathcal{S}$. For each group $g \in \mathcal{G}$, define 
the shared recourse cost function
\begin{equation}
\Phi_g(y,q) := \sum_{s\in\mathcal{S}_g} \phi_s(y,q_s),
\end{equation}
where $\phi_s(y,q_s)$ denotes the optimal value of subproblem Eq. \ref{eq:subproblem} 
as a function of the planning decisions $(y,q_s)$. At any Benders iteration $i$, 
each constraint in Eq. \ref{eq:fixed_grouped_master_cut} is a valid Benders 
optimality cut for $\Phi_g(y,q)$, and the master problem 
Eq. \ref{eq:fixed_grouped_master} provides a valid lower bound on the objective 
value of the original model Eq. \ref{eq:budget_problem}.
\end{proposition}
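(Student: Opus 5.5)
The plan is to reduce the grouped statement to the standard single-subproblem Benders argument and then sum. Fix $s\in\mathcal{S}$ and view $\phi_s(y,q_s)$ as the optimal value of the LP in Eq.~\ref{eq:subproblem}, with $(y,q_s)$ entering only through the right-hand sides of Eq.~\ref{eq:subproblem_op}--\ref{eq:subproblem_fix_q}. By the assumption after Eq.~\ref{eq:subproblem} (slack variables), this LP is feasible for every $(y,q_s)$. It is also bounded below: the costs are nonnegative, and the slacks carry positive penalties. So $\phi_s$ is finite everywhere, and by standard LP parametric theory it is convex and piecewise linear in the right-hand side. Moreover, the optimal duals $(\lambda_s^{(k)},\pi_s^{(k)})$ of the fixing constraints at $(y^{(k)},q_s^{(k)})$ form a subgradient of $\phi_s$ there. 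I would argue this via LP duality: the dual feasible region does not depend on $(y,q_s)$, so $\phi_s$ is a pointwise maximum of affine functions of $(y,q_s)$. The dual optimal at iteration $k$ gives an affine minorant that is tight at $(y^{(k)},q_s^{(k)})$. Hence, for all $(y,q_s)$,
\[
\phi_s(y,q_s)\ \ge\ f_s^{(k)}+(\lambda_s^{(k)})^\top(y-y^{(k)})+\pi_s^{(k)}(q_s-q_s^{(k)}).
\]

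Next, for fixed $g$ and $k$, I would sum this inequality over $s\in\mathcal{S}_g$. This gives $\Phi_g(y,q)\ge\sum_{s\in\mathcal{S}_g}[\cdots]$, which is exactly the right-hand side of Eq.~\ref{eq:fixed_grouped_master_cut}. Equality holds at $(y^{(k)},q^{(k)})$ because $f_s^{(k)}=\phi_s(y^{(k)},q_s^{(k)})$. So each grouped cut is a supporting affine minorant of $\Phi_g$, which is what "valid Benders optimality cut" means. The cut does not depend on $i$, and since $\mathcal{P}$ is fixed the index set $\mathcal{S}_g$ never changes. Therefore every cut generated at any earlier $k\le i$ remains valid at iteration $i$.

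For the lower bound, take any feasible $(y,q,x)$ of Eq.~\ref{eq:budget_problem}. Since $x_s$ is feasible for subproblem $s$ at $(y,q_s)$, its operating cost is at least $\phi_s(y,q_s)$. Set $\theta_g:=\Phi_g(y,q)$. This is nonnegative, since $\phi_s\ge0$ because operating costs and penalties are nonnegative. By the previous step it satisfies all cuts, and $(y,q)$ satisfies Eq.~\ref{eq:fixed_grouped_master_budget}--\ref{eq:fixed_grouped_master_domain} directly. Because $\mathcal{P}$ partitions $\mathcal{S}$, the master objective at this point equals $c_I^\top y+\sum_s\phi_s(y,q_s)$. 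This is at most the original objective value of $(y,q,x)$. Minimizing over both sides gives master optimum $\le$ original optimum. The same argument applies to the LP relaxation stage: integrality is imposed identically in both problems, and the relaxed master bounds the relaxed original problem.

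The main obstacle is the subgradient claim for $\phi_s$. One must justify that the reported multipliers are true dual optima with the right sign convention for a minimization, so that the inequality above holds globally rather than only locally. A related subtlety is that the barrier-without-crossover tolerances in the implementation only give approximate duals. I would handle the first point via the dual-LP representation, and state that exactness of the duals is assumed. Nonnegativity of $\phi_s$, needed for $\theta_g\ge0$, should be noted explicitly as following from nonnegative costs.
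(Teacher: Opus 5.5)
Your proposal is correct and follows the same route as the paper's proof. Both arguments take a per-subproblem affine minorant $\ell_s^{(k)}\le\phi_s$, sum it over the fixed group $\mathcal{S}_g$, and extend any feasible solution of the original model by $\theta_g:=\Phi_g(y,q)$, using that $\mathcal{P}$ partitions $\mathcal{S}$. Your additions tighten steps the paper leaves implicit: you justify the single-subproblem cut via LP duality (the paper only cites ``standard Benders cut validity''), you check $\theta_g\ge 0$ explicitly, and you use $\sum_t c_{s,t}^\top x_{s,t}\ge\phi_s(y,q_s)$ to get an inequality instead of the paper's loose ``same objective value.'' As a minor aside, the paper solves the SPs with crossover enabled (only the master runs without it), so the duals used in the cuts are basic solutions rather than raw barrier iterates, which weakens your approximate-dual concern.
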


\begin{proof}
For each subproblem $s \in \mathcal{S}$ and each Benders iteration 
$k \in \{0,\dots,i\}$, define the affine function:
\begin{equation}
\ell_s^{(k)}(y,q_s) := f_s^{(k)} + (\lambda_s^{(k)})^\top (y-y^{(k)}) 
+ \pi_s^{(k)} (q_s-q_s^{(k)}).
\end{equation}
By standard Benders cut validity, the affine 
function $\ell_s^{(k)}$ constructed from the dual variables of subproblem $s$ 
satisfies
\begin{equation}
\ell_s^{(k)}(y,q_s) \le \phi_s(y,q_s)
\qquad \forall s \in \mathcal{S},\;\forall k \in \{0,\dots,i\}.
\label{eq:fixed_grouped_subproblem_cut_validity}
\end{equation}
Now consider any group $\mathcal{S}_g \in \mathcal{P}$. Summing 
Eq. \ref{eq:fixed_grouped_subproblem_cut_validity} over all $s \in \mathcal{S}_g$ 
gives
\begin{equation}
\sum_{s\in\mathcal{S}_g} \ell_s^{(k)}(y,q_s)
\le
\sum_{s\in\mathcal{S}_g} \phi_s(y,q_s)
=
\Phi_g(y,q)
\qquad \forall g\in\mathcal{G},\;\forall k \in \{0,\dots,i\}.
\end{equation}
Hence each grouped cut in Eq. \ref{eq:fixed_grouped_master_cut} is a valid affine 
lower bound on $\Phi_g(y,q)$.

Now let $(y,q)$ be any feasible solution of the original model Eq. \ref{eq:budget_problem}. Since the MP retains all planning and budgeting constraints from the original model, $(y,q)$ 
is feasible with respect to 
Eq. \ref{eq:fixed_grouped_master_budget}--Eq. \ref{eq:fixed_grouped_master_domain}. 
Define
\[
\theta_g := \Phi_g(y,q)
\qquad \forall g\in\mathcal{G}.
\]
Because each grouped cut is a lower-bound on $\Phi_g(y,q)$, this choice of $\theta_g$ 
satisfies all constraints in Eq. \ref{eq:fixed_grouped_master_cut}. Moreover, 
since $\mathcal{P}$ is a grouping of $\mathcal{S}$,
\[
\sum_{g\in\mathcal{G}} \theta_g
=
\sum_{g\in\mathcal{G}} \Phi_g(y,q)
=
\sum_{s\in\mathcal{S}} \phi_s(y,q_s).
\]
Therefore, every feasible solution of the original model  Eq. \ref{eq:budget_problem} can be extended to a feasible solution of Eq. \ref{eq:fixed_grouped_master} with the same objective value.
\end{proof}

\subsection{Adaptive grouped-cut formulation (shared recourse)}
\label{sec:benders_SI_validity_adaptive_shared}

\begin{proposition}[Validity of the adaptive grouped-cut formulation with shared recourse]
\label{prop:adaptive_grouped_validity}
Let $\mathcal{P}^{(i)} = \{\mathcal{S}_g^{(i)}\}_{g \in \mathcal{G}^{(i)}}$ 
denote the current group assignment used in the MP solved at Benders iteration $i$. For each group 
$g \in \mathcal{G}^{(i)}$, define the shared recourse cost function
\begin{equation}
\Phi_g^{(i)}(y,q) := \sum_{s \in \mathcal{S}_g^{(i)}} \phi_s(y,q_s),
\end{equation}
where $\phi_s(y,q_s)$ denotes the optimal value of subproblem Eq. \ref{eq:subproblem} 
as a function of the planning decisions $(y,q_s)$. Then each constraint in 
Eq. \ref{eq:adaptive_grouped_master_cut} is a valid Benders optimality cut for 
$\Phi_g^{(i)}(y,q)$, and the master problem Eq. \ref{eq:adaptive_grouped_master} 
provides a valid lower bound on the objective value of the original model 
Eq. \ref{eq:budget_problem}.
\end{proposition}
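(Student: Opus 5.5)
The plan follows the proof of Proposition~\ref{prop:fixed_grouped_validity}, with one extra point: the re-aggregation of stored cut coefficients under the current grouping $\mathcal{P}^{(i)}$. We argue at a fixed iteration $i$, treating $\mathcal{P}^{(i)}$ as a fixed partition of $\mathcal{S}$ for the MP solved at that iteration.

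First, for each $s\in\mathcal{S}$ and each stored iteration $k\in\{0,\dots,i\}$, define the affine function $\ell_s^{(k)}(y,q_s)$ exactly as in the proof of Proposition~\ref{prop:fixed_grouped_validity}. Standard Benders validity gives $\ell_s^{(k)}(y,q_s)\le\phi_s(y,q_s)$ for all $(y,q_s)$. This holds because $\phi_s$ is the optimal value of an LP in which $(y,q_s)$ enter only through the right-hand side. The LP is feasible for all $(y,q_s)$ thanks to the penalized slacks, so $\phi_s$ is convex. Moreover, $(\lambda_s^{(k)},\pi_s^{(k)})$ are the duals of the fixing constraints, hence a subgradient of $\phi_s$ at $(y^{(k)},q_s^{(k)})$, and $f_s^{(k)}=\phi_s(y^{(k)},q_s^{(k)})$.

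The key observation is that this inequality holds for each individual $s$ and $k$. It therefore does not depend on which grouping was in force when iteration $k$ was performed. For any group $g\in\mathcal{G}^{(i)}$, summing over $s\in\mathcal{S}_g^{(i)}$ gives $\sum_{s\in\mathcal{S}_g^{(i)}}\ell_s^{(k)}\le\Phi_g^{(i)}$ for every $k\le i$. So every reconstructed cut in Eq.~\ref{eq:adaptive_grouped_master_cut} is a valid affine under-estimator of $\Phi_g^{(i)}$.

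For the lower bound, take any feasible $(y,q,\{x_s\})$ of Eq.~\ref{eq:budget_problem}. The pair $(y,q)$ satisfies Eqs.~\ref{eq:adaptive_grouped_master_budget}--\ref{eq:adaptive_grouped_master_domain}, since the MP keeps these constraints unchanged. Set $\theta_g:=\Phi_g^{(i)}(y,q)$. Then:
\begin{itemize}[label={}]
\item $\theta_g\ge 0$, because the operating costs, including the slack penalties, are nonnegative, so $\phi_s\ge0$;
\item all cuts hold by the previous step;
\item since $\mathcal{P}^{(i)}$ partitions $\mathcal{S}$, $\sum_g\theta_g=\sum_s\phi_s(y,q_s)\le\sum_s\sum_t c_{s,t}^\top x_{s,t}$.
\end{itemize}
Hence the MP value is at most the original objective of any feasible point, and the MP optimum is a valid lower bound.

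The main obstacle is not the algebra but the change of grouping. Cuts generated under an earlier grouping $\mathcal{P}^{(k)}$ are generally not valid for $\theta_g$ under $\mathcal{P}^{(i)}$, because a given index $g$ may now denote a different set of SPs. The argument must therefore rely on the fact that Algorithm~\ref{alg:adaptive_grouped} discards the old cuts and rebuilds them from the stored per-SP coefficients. With that reconstruction, the validity at iteration $i$ depends only on $\mathcal{P}^{(i)}$ being a partition.

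A secondary point is that $\phi_s$ must be convex and finite everywhere, which the slack variables guarantee. The nonnegativity $\phi_s\ge0$ is needed for the constraint $\theta_g\ge0$; I would verify it from the sign of the cost data, and otherwise note that this constraint should be dropped.
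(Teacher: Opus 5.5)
Your proposal is correct and follows essentially the same route as the paper. Like the paper, you use per-SP cut validity $\ell_s^{(k)} \le \phi_s$, sum it over the current groups $\mathcal{S}_g^{(i)}$ (valid because historical cuts are re-aggregated from the stored per-SP coefficients), and extend any feasible $(y,q)$ of the original model by $\theta_g := \Phi_g^{(i)}(y,q)$. Your extra checks are small refinements the paper leaves implicit: $\theta_g \ge 0$ from nonnegative costs, and $\sum_s \phi_s(y,q_s) \le \sum_s\sum_t c_{s,t}^\top x_{s,t}$ in place of the paper's ``same objective value''.
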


\begin{proof}
For each subproblem $s \in \mathcal{S}$ and each historical Benders iteration 
$k \in \{0,\dots,i\}$, define the affine function
\begin{equation}
\ell_s^{(k)}(y,q_s) := f_s^{(k)} + (\lambda_s^{(k)})^\top (y-y^{(k)}) 
+ \pi_s^{(k)} (q_s-q_s^{(k)}).
\end{equation}
By standard Benders cut validity \citep{pecci_regularized_2025}, the affine 
function $\ell_s^{(k)}$, constructed from the dual variables of the subproblem, 
satisfies
\begin{equation}
\ell_s^{(k)}(y,q_s) \le \phi_s(y,q_s)
\qquad \forall s \in \mathcal{S},\;\forall k \in \{0,\dots,i\}.
\label{eq:adaptive_grouped_subproblem_cut_validity}
\end{equation}
Because the subproblem-level cut coefficients 
$(f_s^{(k)}, \lambda_s^{(k)}, \pi_s^{(k)})$ are stored for all $s \in \mathcal{S}$ 
and all $k \in \{0,\dots,i\}$, historical cuts can be re-aggregated under the 
current group assignment $\mathcal{P}^{(i)}$. Validity is preserved under 
re-aggregation, since summing valid subproblem-level inequalities over any subset 
of $\mathcal{S}$ yields a valid inequality for the corresponding shared recourse cost function.

Now consider any group $\mathcal{S}_g^{(i)} \in \mathcal{P}^{(i)}$. Summing 
Eq. \ref{eq:adaptive_grouped_subproblem_cut_validity} over all 
$s \in \mathcal{S}_g^{(i)}$ gives
\begin{equation}
\sum_{s \in \mathcal{S}_g^{(i)}} \ell_s^{(k)}(y,q_s)
\le
\sum_{s \in \mathcal{S}_g^{(i)}} \phi_s(y,q_s)
=
\Phi_g^{(i)}(y,q)
\qquad \forall g \in \mathcal{G}^{(i)},\;\forall k \in \{0,\dots,i\}.
\end{equation}
Thus, each reconstructed grouped cut in Eq. \ref{eq:adaptive_grouped_master_cut} 
is a valid Benders optimality cut for $\Phi_g^{(i)}(y,q)$.

Now let $(y,q)$ be any feasible solution of the original model 
Eq. \ref{eq:budget_problem}. Since the MP retains all planning and budgeting 
constraints from the original model, $(y,q)$ is feasible with respect to 
Eq. \ref{eq:adaptive_grouped_master_budget}--Eq. \ref{eq:adaptive_grouped_master_domain}. 
Define
\[
\theta_g := \Phi_g^{(i)}(y,q)
\qquad \forall g \in \mathcal{G}^{(i)}.
\]
Because each reconstructed grouped cut is a lower bound on $\Phi_g^{(i)}(y,q)$, this 
choice of $\theta_g$ satisfies all constraints in 
Eq. \ref{eq:adaptive_grouped_master_cut}. Moreover, since $\mathcal{P}^{(i)}$ is 
a grouping of $\mathcal{S}$,
\[
\sum_{g \in \mathcal{G}^{(i)}} \theta_g
=
\sum_{g \in \mathcal{G}^{(i)}} \Phi_g^{(i)}(y,q)
=
\sum_{s \in \mathcal{S}} \phi_s(y,q_s).
\]
Therefore, every feasible solution of the original model 
Eq. \ref{eq:budget_problem} can be extended to a feasible solution of 
Eq. \ref{eq:adaptive_grouped_master} with the same objective value.
\end{proof}

\subsection{Adaptive grouped-cut formulation (individual recourse)}
\label{sec:benders_SI_validity_adaptive_individual}

\begin{proposition}[Validity of the adaptive grouped-cut formulation with individual recourse]
\label{prop:adaptive_disaggregated_validity}
At any Benders iteration \(i\), each constraint in Eq. \ref{eq:adaptive_disaggregated_master_cut} is a valid Benders optimality cut obtained by aggregating valid subproblem-level cuts over the group assignment used at the iteration in which that cut was generated. Because the master problem Eq. \ref{eq:adaptive_disaggregated_master} retains subproblem-level recourse variables \(\theta_s\) for all \(s \in \mathcal{S}\), cuts generated under different historical group assignments remain directly compatible in the MP. Consequently, Eq. \ref{eq:adaptive_disaggregated_master} provides a valid lower bound on the objective value of the original model Eq. \ref{eq:budget_problem}.
\end{proposition}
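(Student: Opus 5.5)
The argument follows the same template as the proofs of Propositions \ref{prop:fixed_grouped_validity} and \ref{prop:adaptive_grouped_validity}. The one change is that the relaxation is now indexed by SP rather than by group. As before, for each $s\in\mathcal{S}$ and $k\in\{0,\dots,i\}$ we define the affine function $\ell_s^{(k)}(y,q_s) := f_s^{(k)} + (\lambda_s^{(k)})^\top (y-y^{(k)}) + \pi_s^{(k)}(q_s-q_s^{(k)})$. Standard Benders cut validity gives $\ell_s^{(k)}(y,q_s)\le \phi_s(y,q_s)$ for all $(y,q_s)$, where $\phi_s$ is the optimal value function of Eq. \ref{eq:subproblem}. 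This holds because, under the assumed slack-penalty construction, every SP is feasible and has finite value. Hence $\phi_s$ is convex in $(y,q_s)$, and $(\lambda_s^{(k)},\pi_s^{(k)})$ is a subgradient at $(y^{(k)},q_s^{(k)})$ with $\phi_s(y^{(k)},q_s^{(k)})=f_s^{(k)}$.

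\textbf{First step: each cut is valid.} Fix a historical iteration $k$ and a group $g\in\mathcal{G}^{(k)}$ of the partition $\mathcal{P}^{(k)}$ used at that iteration. Summing the SP-level inequalities over $s\in\mathcal{S}_g^{(k)}$ gives
\[
\sum_{s\in\mathcal{S}_g^{(k)}} \ell_s^{(k)}(y,q_s) \le \sum_{s\in\mathcal{S}_g^{(k)}} \phi_s(y,q_s).
\]
This is exactly the statement that the constraint in Eq. \ref{eq:adaptive_disaggregated_master_cut} is satisfied whenever $\theta_s=\phi_s(y,q_s)$. Note that this inequality involves only the subset $\mathcal{S}_g^{(k)}$ and the stored coefficients from iteration $k$. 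It makes no reference to the current partition $\mathcal{P}^{(i)}$.

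\textbf{Second step: compatibility across regroupings.} Each grouped cut is an inequality in the fixed variables $(y,q,\{\theta_s\}_{s\in\mathcal{S}})$. These variables keep the same meaning ($\theta_s$ always approximates $\phi_s$) regardless of how the SPs are later regrouped. So when the partition changes, no re-aggregation is needed, unlike in Proposition \ref{prop:adaptive_grouped_validity}. Cuts built from different partitions $\mathcal{P}^{(0)},\dots,\mathcal{P}^{(i)}$ are simply simultaneous valid inequalities for the same epigraphical relaxation.

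\textbf{Third step: lower bound.} Take any feasible $(y,q,\{x_s\})$ of Eq. \ref{eq:budget_problem}. Its $(y,q)$ component satisfies Eq. \ref{eq:adaptive_disaggregated_master_budget}--Eq. \ref{eq:adaptive_disaggregated_master_domain}, since the MP retains all planning and budgeting constraints. Set $\theta_s:=\phi_s(y,q_s)$. Then $\theta_s\ge 0$, because operating costs and penalties are nonnegative. By the first step this choice satisfies every cut for every $k$ and every $g\in\mathcal{G}^{(k)}$. The resulting MP objective is $c_I^\top y+\sum_s\phi_s(y,q_s)$, which is at most the original objective of that solution because $\phi_s$ is minimal over $x_s$. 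So the MP optimum is at most the optimum of Eq. \ref{eq:budget_problem}, and $LB^{(i)}$ is a valid lower bound.

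\textbf{Main obstacle.} The only delicate point is justifying $\ell_s^{(k)}\le\phi_s$ together with $\theta_s\ge 0$. The SP-level inequality rests on LP duality for the fixing constraints Eq. \ref{eq:subproblem_fix_y}--\ref{eq:subproblem_fix_q}. The SPs remain LPs with $y$ fixed, even though integrality is imposed in the MP; the integer restriction on $y$ lives only in the MP, so it does not affect the convexity of $\phi_s$ in $(y,q_s)$. Nonnegativity of $\theta_s$ requires nonnegative SP cost coefficients. Should that fail, I would drop or relax $\theta_s\ge 0$; the rest of the argument does not depend on it. Beyond these points, the proof reduces to the summation argument already used in the earlier propositions.
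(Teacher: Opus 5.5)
Your proposal is correct and follows essentially the same route as the paper's proof. Both arguments use SP-level cut validity, summation over the group $\mathcal{S}_g^{(k)}$ of the iteration at which each cut was generated, and extension of any feasible $(y,q)$ of Eq.~\ref{eq:budget_problem} via $\theta_s := \phi_s(y,q_s)$. Your extra points are refinements the paper leaves implicit rather than a different argument: the convexity and subgradient justification of $\ell_s^{(k)} \le \phi_s$, the observation that $\theta_s \ge 0$ relies on nonnegative operating costs, and stating the objective comparison as an inequality rather than an equality.
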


\begin{proof}
For each subproblem \(s \in \mathcal{S}\) and each Benders iteration 
\(k \in \{0,\dots,i\}\), define the affine function
\begin{equation}
\ell_s^{(k)}(y,q_s) := f_s^{(k)} + (\lambda_s^{(k)})^\top (y-y^{(k)}) 
+ \pi_s^{(k)} (q_s-q_s^{(k)}).
\end{equation}
By standard Benders cut validity \citep{pecci_regularized_2025}, the affine 
function \(\ell_s^{(k)}\), constructed from the dual variables of the subproblem, 
satisfies
\begin{equation}
\ell_s^{(k)}(y,q_s) \le \phi_s(y,q_s)
\qquad \forall s \in \mathcal{S},\;\forall k \in \{0,\dots,i\}.
\label{eq:adaptive_disaggregated_subproblem_cut_validity}
\end{equation}

Now fix any historical iteration \(k \in \{0,\dots,i\}\) and let 
\(\mathcal{P}^{(k)} = \{\mathcal{S}_g^{(k)}\}_{g \in \mathcal{G}^{(k)}}\) denote 
the group assignment used to generate cuts at that iteration. Summing 
Eq. \ref{eq:adaptive_disaggregated_subproblem_cut_validity} over all 
\(s \in \mathcal{S}_g^{(k)}\) gives
\begin{equation}
\sum_{s \in \mathcal{S}_g^{(k)}} \ell_s^{(k)}(y,q_s)
\le
\sum_{s \in \mathcal{S}_g^{(k)}} \phi_s(y,q_s)
\qquad \forall g \in \mathcal{G}^{(k)}.
\end{equation}
Therefore, each grouped cut generated for Eq. \ref{eq:adaptive_disaggregated_master_cut} at iteration \(k\) is a valid Benders optimality cut.

Now let \((y,q)\) be any feasible solution of the original model 
Eq. \ref{eq:budget_problem}. Since the MP retains all planning and budgeting 
constraints from the original problem, \((y,q)\) is feasible with respect to 
Eq. \ref{eq:adaptive_disaggregated_master_budget}--Eq. \ref{eq:adaptive_disaggregated_master_domain}. 
Define
\[
\theta_s := \phi_s(y,q_s)
\qquad \forall s \in \mathcal{S}.
\]
Then, for any historical cut generated for group \(\mathcal{S}_g^{(k)}\),
\[
\sum_{s \in \mathcal{S}_g^{(k)}} \theta_s
=
\sum_{s \in \mathcal{S}_g^{(k)}} \phi_s(y,q_s)
\ge
\sum_{s \in \mathcal{S}_g^{(k)}} \ell_s^{(k)}(y,q_s),
\]
so all constraints in Eq. \ref{eq:adaptive_disaggregated_master_cut} are satisfied. Moreover,
\[
\sum_{s \in \mathcal{S}} \theta_s
=
\sum_{s \in \mathcal{S}} \phi_s(y,q_s).
\]
Therefore, every feasible solution of the original model Eq. \ref{eq:budget_problem} can be extended to a feasible solution of Eq. \ref{eq:adaptive_disaggregated_master} with the same objective value.
\end{proof}

\newpage
\section{Input Data and Assumptions}
\label{sec:benders_SI_inputs}

\begin{table}[htbp]
\centering
\caption[IPM regions across Benders case studies]{IPM regions included in each case study of 11, 20, and 26 zones to evaluate BD methods}
\label{tab_benders_SI_case_study_regions}
\small
\renewcommand{\arraystretch}{1.3}
\begin{tabular}{c l}
\toprule
Case study & IPM regions included \\
\midrule
11-zone &
\shortstack[l]{NENG\_CT, NENGREST, NENG\_ME, NY\_Z\_C\&E, NY\_Z\_F,\\
NY\_Z\_G-I, NY\_Z\_J, NY\_Z\_K, NY\_Z\_A, NY\_Z\_B, NY\_Z\_D} \\
\midrule
20-zone &
\shortstack[l]{NENG\_CT, NENGREST, NENG\_ME, NY\_Z\_C\&E, NY\_Z\_F,\\
NY\_Z\_G-I, NY\_Z\_J, NY\_Z\_K, NY\_Z\_A, NY\_Z\_B,\\
NY\_Z\_D, PJM\_WMAC, PJM\_EMAC, PJM\_SMAC, PJM\_West,\\
PJM\_AP, PJM\_COMD, PJM\_ATSI, PJM\_Dom, PJM\_PENE} \\
\midrule
26-zone &
\shortstack[l]{NENG\_CT, NENGREST, NENG\_ME, NY\_Z\_C\&E, NY\_Z\_F,\\
NY\_Z\_G-I, NY\_Z\_J, NY\_Z\_K, NY\_Z\_A, NY\_Z\_B,\\
NY\_Z\_D, PJM\_WMAC, PJM\_EMAC, PJM\_SMAC, PJM\_West,\\
PJM\_AP, PJM\_COMD, PJM\_ATSI, PJM\_Dom, PJM\_PENE,\\
S\_VACA, S\_C\_KY, S\_D\_AECI, S\_C\_TVA, S\_SOU, FRCC} \\
\bottomrule
\end{tabular}
\end{table}

\begin{table}[htbp]
\centering
\caption[Total annual electricity demand by region]{Total annual electricity demand by IPM region for the case studies listed in Table \ref{tab_benders_SI_case_study_regions}. Demand is based on the hourly electricity demand profiles in Princeton Net-Zero America's high-electrification (E+) scenario for 2050 \citep{larson_net-zero_2021}, with original state-level profiles mapped to IPM regions using county-level population weights derived from the 2021 U.S.\ Census Bureau data.}
\label{tab_benders_SI_regional_demand}
\small
\renewcommand{\arraystretch}{1.3}
\setlength{\tabcolsep}{4pt}
\begin{tabular}{lc}
\toprule
Region & Total annual electricity demand (TWh/y)\\
\midrule
NENG\_CT   & 63 \\
NENGREST   & 171 \\
NENG\_ME   & 28 \\
NY\_Z\_C\&E & 33 \\
NY\_Z\_F   & 20 \\
NY\_Z\_G-I & 38 \\
NY\_Z\_J   & 140 \\
NY\_Z\_K   & 46 \\
NY\_Z\_A   & 22 \\
NY\_Z\_B   & 17 \\
NY\_Z\_D   & 4 \\
PJM\_WMAC  & 66 \\
PJM\_EMAC  & 287 \\
PJM\_SMAC  & 112 \\
PJM\_West  & 268 \\
PJM\_AP    & 81 \\
PJM\_COMD  & 216 \\
PJM\_ATSI  & 135 \\
PJM\_Dom   & 155 \\
PJM\_PENE  & 32 \\
S\_VACA    & 336 \\
S\_C\_KY   & 58 \\
S\_D\_AECI & 11 \\
S\_C\_TVA  & 259 \\
S\_SOU     & 355 \\
FRCC       & 361 \\
\bottomrule
\end{tabular}
\end{table}

\begin{table}[htbp]
\centering
\caption[Existing capacity and cost parameters for power network]{Existing capacity and cost parameters in 2022 dollars for power transmission expansion in the case studies listed in Table \ref{tab_benders_SI_case_study_regions}. Existing transmission capacities are based on the U.S. Environmental Protection Agency (EPA) version of the Integrated Planning Model (IPM) \citep{us_epa_documentation_2018}.}
\label{tab_benders_SI_transmission}
\scriptsize
\renewcommand{\arraystretch}{1.3}
\setlength{\tabcolsep}{4pt}
\begin{tabular}{c l c c c c}
\toprule
\shortstack[c]{Network\\Lines}
& \shortstack[c]{Transmission\\Path}
& \shortstack[c]{Distance\\(Miles)}
& \shortstack[c]{Existing\\Capacity\\(MW)}
& \shortstack[c]{Line Reinforcement\\Cost\\(\$/MW-mile)}
& \shortstack[c]{Annualized Line\\Reinforcement Cost\\(\$/MW-mile/y)} \\
\midrule
1  & NENG\_CT to NENGREST   & 123.06 & 2,950 & 5,199.32 & 319.19 \\
2  & NENG\_CT to NY\_Z\_G-I & 76.01  & 800   & 4,087.49 & 250.94 \\
3  & NENG\_CT to NY\_Z\_K   & 55.71  & 760   & 6,227.74 & 382.33 \\
4  & NENGREST to NENG\_ME   & 196.54 & 2,000 & 5,199.28 & 319.19 \\
5  & NENGREST to NY\_Z\_F   & 99.48  & 800   & 4,087.63 & 250.95 \\
6  & NENGREST to NY\_Z\_D   & 152.99 & 150   & 4,086.72 & 250.89 \\
7  & NY\_Z\_C\&E to NY\_Z\_F   & 101.45 & 3,250 & 2,975.91 & 182.70 \\
8  & NY\_Z\_C\&E to NY\_Z\_G-I & 125.54 & 2,150 & 2,975.42 & 182.67 \\
9  & NY\_Z\_C\&E to NY\_Z\_B   & 94.50  & 1,300 & 2,975.07 & 182.64 \\
10 & NY\_Z\_C\&E to NY\_Z\_D   & 134.92 & 2,650 & 2,975.02 & 182.64 \\
11 & NY\_Z\_C\&E to PJM\_PENE  & 175.09 & 755   & 3,016.03 & 185.16 \\
12 & NY\_Z\_F to NY\_Z\_G-I & 112.35 & 3,475 & 2,975.88 & 182.69 \\
13 & NY\_Z\_G-I to NY\_Z\_J & 65.59  & 4,450 & 5,115.64 & 314.06 \\
14 & NY\_Z\_G-I to NY\_Z\_K & 81.06  & 1,290 & 5,115.66 & 314.06 \\
15 & NY\_Z\_J to NY\_Z\_K   & 48.40  & 283   & 7,256.04 & 445.46 \\
16 & NY\_Z\_K to PJM\_EMAC  & 147.32 & 660   & 6,141.72 & 377.05 \\
17 & NY\_Z\_A to NY\_Z\_B   & 54.24  & 2,270 & 2,974.56 & 182.61 \\
18 & NY\_Z\_A to PJM\_PENE  & 99.28  & 500   & 3,015.55 & 185.13 \\
19 & PJM\_WMAC to PJM\_EMAC & 101.94 & 6,900 & 4,118.85 & 252.86 \\
20 & PJM\_WMAC to PJM\_SMAC & 131.51 & 780   & 4,394.67 & 269.80 \\
21 & PJM\_WMAC to PJM\_AP   & 177.46 & 350   & 2,450.49 & 150.44 \\
22 & PJM\_WMAC to PJM\_PENE & 104.71 & 0     & 3,133.59 & 192.38 \\
23 & PJM\_EMAC to PJM\_SMAC & 91.48  & 3,600 & 5,303.08 & 325.56 \\
24 & PJM\_SMAC to PJM\_AP   & 136.61 & 1,100 & 3,634.64 & 223.14 \\
25 & PJM\_SMAC to PJM\_Dom  & 126.41 & 1,200 & 3,371.72 & 206.99 \\
26 & PJM\_West to PJM\_AP   & 198.40 & 4,800 & 1,440.47 & 88.43 \\
27 & PJM\_West to PJM\_COMD & 375.02 & 980   & 1,142.61 & 70.15 \\
28 & PJM\_West to PJM\_ATSI & 155.83 & 7,400 & 1,326.43 & 81.43 \\
29 & PJM\_West to PJM\_Dom  & 296.24 & 1,530 & 1,177.54 & 72.29 \\
30 & PJM\_West to S\_VACA   & 318.16 & 1,219 & 1,014.92 & 62.31 \\
31 & PJM\_West to S\_C\_KY  & 111.56 & 1,214 & 1,211.92 & 74.40 \\
32 & PJM\_West to S\_C\_TVA & 315.91 & 2,119 & 1,143.09 & 70.18 \\
33 & PJM\_AP to PJM\_ATSI   & 177.36 & 2,444 & 1,576.50 & 96.78 \\
34 & PJM\_AP to PJM\_Dom    & 174.55 & 5,400 & 1,427.62 & 87.64 \\
35 & PJM\_AP to PJM\_PENE   & 118.06 & 2,785 & 2,373.63 & 145.72 \\
36 & PJM\_ATSI to PJM\_PENE & 192.20 & 0     & 2,259.62 & 138.72 \\
37 & PJM\_Dom to S\_VACA    & 218.05 & 1,000 & 1,002.08 & 61.52 \\
38 & S\_VACA to S\_C\_TVA   & 378.98 & 216   & 967.64   & 59.40 \\
39 & S\_VACA to S\_SOU      & 331.43 & 1,400 & 1,050.74 & 64.51 \\
40 & S\_C\_KY to S\_C\_TVA  & 208.83 & 764   & 1,164.71 & 71.50 \\
41 & S\_D\_AECI to S\_C\_TVA & 364.08 & 0     & 1,158.52 & 71.12 \\
42 & S\_C\_TVA to S\_SOU    & 237.49 & 4,717 & 1,178.85 & 72.37 \\
43 & FRCC to S\_SOU         & 334.92 & 3,600 & 1,293.76 & 79.43 \\
\bottomrule
\end{tabular}
\end{table}

\begin{landscape}
\begin{table}[htbp]
\centering
\caption[Greenfield power generation technology cost and performance parameters]{Greenfield power generation technology cost and performance parameters. Data corresponds to 2045 costs (moderate) reported by the NREL Annual Technology Baseline 2024 in 2022 dollars \citep{mirletz_2024_2024}, with the assumption that 2050 energy system would consist of technologies built five years earlier. A discount rate of 4.5\% is used to annualize investment costs according to assumed technology lifetime and costs are converted to 2022 dollars. CC = Combined cycle, CT = Combustion turbine, CCS = CO$_2$ capture and storage.}
\label{tab_Benders_SI_Greenfield}
\small
\renewcommand{\arraystretch}{1.3}
\setlength{\tabcolsep}{4pt}
\begin{tabular}{lcccccccccc}
\toprule
\shortstack[c]{Technology}
& \shortstack[c]{Lifetime\\(year)}
& \multicolumn{2}{c}{\shortstack[c]{Investment cost}}
& \multicolumn{2}{c}{\shortstack[c]{Annualized\\CAPEX}}
& \multicolumn{2}{c}{\shortstack[c]{Fixed operation \\and maintenance \\(FOM) cost}}
& \multicolumn{1}{c}{\shortstack[c]{Variable operation \\and maintenance \\(VOM) cost}}
& \multicolumn{1}{c}{\shortstack[c]{Heat\\Rate}} \\
\cmidrule(lr){3-4} \cmidrule(lr){5-6} \cmidrule(lr){7-8}
& 
& \shortstack[c]{Power\\(\$/kW)}
& \shortstack[c]{Energy\\(\$/kWh)}
& \shortstack[c]{Power\\(\$/kW\\/year)}
& \shortstack[c]{Energy\\(\$/kWh\\/year)}
& \shortstack[c]{Power\\(\$/kW\\/year)}
& \shortstack[c]{Energy\\(\$/kWh\\/year)}
& \shortstack[c]{(\$/MWh)}
& \shortstack[c]{(MMBtu/MWh)} \\
\midrule
Natural Gas (CC)      & 30 & 1,014 & --  & 62  & --  & 28  & --  & 1.85 & 6.16 \\
Natural Gas (CC-CCS)  & 30 & 1,792 & --  & 110 & --  & 48  & --  & 3.76 & 6.88 \\
Natural Gas (CT)      & 30 & 908   & --  & 56  & --  & 23  & --  & 6.94 & 9.72 \\
Nuclear                                & 40 & 4,250 & --  & 231 & --  & 175 & --  & 2.80 & 10.50 \\
Solar                                  & 30 & 628   & --  & 39  & --  & 14  & --  & --   & --   \\
\shortstack[l]{Land Based Wind}        & 30 & 1,021 & --  & 63  & --  & 26  & --  & --   & --   \\
\shortstack[l]{Offshore Wind}          & 30 & 2,823 & --  & 173 & --  & 57  & --  & --   & --   \\
Battery                                & 15 & 283   & 179 & 26  & 17  & 7   & 4   & --   & --   \\
\bottomrule
\end{tabular}
\end{table}
\end{landscape}

\begin{landscape}
\begin{table}[htbp]
\centering
\caption[Existing power generator capacity by resource in each region]{Existing power generation capacity by resource in each IPM region for the case studies listed in Table \ref{tab_benders_SI_case_study_regions}. Existing generation capacity is aggregated by resource type for each IPM region based on the 2050 system setup used in this study. Data are obtained from EIA860 and Public Utility Data Liberation through PowerGenome \citep{schivley_powergenomepowergenome_2023}. Natural gas capacity is retained based on plant-lifetime assumptions from the NREL ReEDS model \citep{reeds_modeling_and_analysis_team_regional_2021}, and existing nuclear generators are assumed to receive second lifetime extensions consistent with the Inflation Reduction Act (IRA) \citep{the_white_house_inflation_2022}. CC = Combined cycle, CT = Combustion turbine. Values are reported in GW.}
\label{tab_benders_SI_existing_capacity_ipm}
\scriptsize
\renewcommand{\arraystretch}{1.3}
\setlength{\tabcolsep}{4pt}
\begin{tabular}{lcccccccc}
\toprule
Region
& Conventional Hydro
& Pumped Storage
& Natural Gas (CC)
& Natural Gas (CT)
& Nuclear
& Onshore Wind
& Small Hydro
& Solar PV \\
\midrule
NENG\_CT   & 0.04 & 0.03 & 4.12  & 0.60 & 2.10 & --   & 0.02 & 0.33 \\
NENG\_ME   & 0.35 & --   & 1.37  & 0.19 & --   & 0.02 & 0.16 & 0.33 \\
NENGREST   & 0.69 & 1.77 & 7.87  & 0.99 & 1.25 & --   & 0.17 & 1.55 \\
NY\_Z\_A   & 2.43 & 0.24 & 0.38  & 0.05 & --   & 0.12 & --   & 0.08 \\
NY\_Z\_B   & 0.04 & --   & 0.12  & --   & --   & 0.34 & --   & 0.10 \\
NY\_Z\_C\&E& 0.06 & --   & 1.65  & --   & 2.16 & 0.38 & 0.19 & 0.50 \\
NY\_Z\_D   & 0.82 & --   & 0.37  & --   & --   & --   & 0.02 & 0.03 \\
NY\_Z\_F   & 0.27 & 1.17 & 3.53  & --   & --   & --   & 0.08 & 0.21 \\
NY\_Z\_G-I & --   & --   & 1.90  & --   & --   & --   & 0.03 & 0.12 \\
NY\_Z\_J   & --   & --   & 3.79  & 2.00 & --   & --   & --   & 0.01 \\
NY\_Z\_K   & --   & --   & 0.75  & 0.58 & --   & --   & --   & 0.13 \\
PJM\_AP    & 0.10 & --   & 3.35  & 1.29 & --   & 0.12 & 0.11 & 0.21 \\
PJM\_ATSI  & --   & --   & 6.04  & 1.62 & 2.18 & --   & 0.01 & 0.37 \\
PJM\_COMD  & 0.01 & --   & 5.03  & 6.20 & 6.52 & 0.09 & 0.01 & 0.16 \\
PJM\_Dom   & 0.62 & 3.00 & 9.16  & 2.61 & 3.71 & --   & 0.01 & 3.49 \\
PJM\_EMAC  & 0.57 & 1.49 & 14.70 & 2.15 & 8.50 & --   & --   & 1.29 \\
PJM\_PENE  & 0.03 & 0.48 & 2.99  & 0.33 & --   & --   & 0.05 & 0.04 \\
PJM\_SMAC  & --   & --   & 1.86  & 0.27 & 1.77 & --   & --   & 0.11 \\
PJM\_WMAC  & 0.71 & --   & 9.09  & 0.14 & 2.49 & 0.08 & 0.01 & 0.04 \\
PJM\_West  & 0.79 & 0.24 & 10.27 & 6.43 & 4.15 & 0.45 & 0.10 & 1.39 \\
S\_C\_KY   & 0.30 & --   & 0.68  & 1.50 & --   & --   & 0.01 & 0.01 \\
S\_C\_TVA  & 4.66 & 1.68 & 10.80 & 2.32 & 8.54 & --   & 0.10 & 1.12 \\
S\_D\_AECI & --   & 0.03 & 1.92  & 0.35 & --   & 0.10 & --   & --   \\
S\_SOU     & 3.91 & 1.90 & 22.87 & 5.87 & 10.29& --   & 0.05 & 4.14 \\
S\_VACA    & 2.17 & 2.80 & 9.97  & 7.83 & 12.10& --   & 0.12 & 7.07 \\
FRCC       & 0.04 & --   & 36.98 & 7.95 & 3.74 & --   & --   & 5.45 \\
\bottomrule
\end{tabular}
\end{table}
\end{landscape}

\begin{landscape}
\begin{table}[htbp]
\centering
\caption[Regional cost multipliers for new power generation technologies]{Regional cost multipliers for investment cost of new power generation technologies by IPM region for the case studies listed in Table \ref{tab_benders_SI_case_study_regions}, obtained from the electricity market module in EIA’s Annual Energy Outlook (AEO) 2023 \citep{us_eia_annual_2023}. CC = Combined cycle, CT = Combustion turbine, CCS = CO$_2$ capture and storage. These multipliers are applied to the baseline investment costs of each greenfield technology in Table \ref{tab_Benders_SI_Greenfield} as region-specific model inputs to account for regional variation in investment costs.}
\label{tab_benders_SI_regional_cost_multipliers}
\scriptsize
\renewcommand{\arraystretch}{1.3}
\setlength{\tabcolsep}{4pt}
\begin{tabular}{lccccccccc}
\toprule
Region & Natural Gas (CC) & Natural Gas (CT) & Natural Gas (CC-CCS) & Nuclear & Battery & Hydro & Onshore Wind & Offshore Wind & Solar \\
\midrule
NENG\_CT   & 1.19 & 1.13 & 1.06 & 1.13 & 1.03 & 0.66 & 1.27 & 1.00 & 1.02 \\
NENG\_ME   & 1.19 & 1.13 & 1.06 & 1.13 & 1.03 & 0.66 & 1.27 & 1.00 & 1.02 \\
NENGREST   & 1.19 & 1.13 & 1.06 & 1.13 & 1.03 & 0.66 & 1.27 & 1.00 & 1.02 \\
NY\_Z\_A   & 1.17 & 1.09 & 1.04 & 1.06 & 1.00 & 1.34 & 1.54 & 1.22 & 1.01 \\
NY\_Z\_B   & 1.17 & 1.09 & 1.04 & 1.06 & 1.00 & 1.34 & 1.54 & 1.22 & 1.01 \\
NY\_Z\_C\&E & 1.17 & 1.09 & 1.04 & 1.06 & 1.00 & 1.34 & 1.54 & 1.22 & 1.01 \\
NY\_Z\_D   & 1.17 & 1.09 & 1.04 & 1.06 & 1.00 & 1.34 & 1.54 & 1.22 & 1.01 \\
NY\_Z\_F   & 1.17 & 1.09 & 1.04 & 1.06 & 1.00 & 1.34 & 1.54 & 1.22 & 1.01 \\
NY\_Z\_G-I & 1.17 & 1.09 & 1.04 & 1.06 & 1.00 & 1.34 & 1.54 & 1.22 & 1.01 \\
NY\_Z\_J   & 1.62 & 1.46 & 1.20 & --   & 1.03 & --   & --   & 1.01 & 1.20 \\
NY\_Z\_K   & 1.62 & 1.46 & 1.20 & --   & 1.03 & --   & --   & 1.01 & 1.20 \\
PJM\_AP    & 0.98 & 0.96 & 0.97 & 0.98 & 1.00 & 1.28 & 0.96 & 1.00 & 0.98 \\
PJM\_ATSI  & 0.98 & 0.96 & 0.97 & 0.98 & 1.00 & 1.28 & 0.96 & 1.00 & 0.98 \\
PJM\_COMD  & 1.25 & 1.24 & 1.10 & 1.22 & 1.01 & 1.22 & 1.26 & 1.32 & 1.07 \\
PJM\_Dom   & 1.10 & 1.02 & 1.01 & 1.02 & 1.01 & 1.24 & 1.32 & 1.04 & 0.98 \\
PJM\_EMAC  & 1.19 & 1.12 & 1.05 & 1.11 & 1.01 & 1.40 & 1.27 & 1.00 & 1.04 \\
PJM\_PENE  & 1.19 & 1.12 & 1.05 & 1.11 & 1.01 & 1.40 & 1.27 & 1.00 & 1.04 \\
PJM\_SMAC  & 1.19 & 1.12 & 1.05 & 1.11 & 1.01 & 1.40 & 1.27 & 1.00 & 1.04 \\
PJM\_West  & 0.98 & 0.96 & 0.97 & 0.98 & 1.00 & 1.28 & 0.96 & 1.00 & 0.98 \\
PJM\_WMAC  & 1.19 & 1.12 & 1.05 & 1.11 & 1.01 & 1.40 & 1.27 & 1.00 & 1.04 \\
S\_C\_KY   & 0.94 & 0.95 & 0.96 & 1.02 & 1.03 & 0.47 & 0.96 & --   & 0.96 \\
S\_C\_TVA  & 0.94 & 0.95 & 0.96 & 1.02 & 1.03 & 0.47 & 0.96 & --   & 0.96 \\
S\_D\_AECI & 1.05 & 1.05 & 1.01 & 1.11 & 1.03 & --   & 0.95 & --   & 1.02 \\
S\_SOU     & 0.93 & 0.92 & 0.94 & 1.01 & 1.02 & 1.49 & 1.29 & --   & 0.96 \\
S\_VACA    & 0.91 & 0.91 & 0.94 & 1.01 & 1.03 & 0.69 & 1.14 & 0.90 & 1.00 \\
FRCC       & 0.93 & 0.93 & 0.96 & 0.97 & 1.00 & 1.78 & --   & 1.00 & 0.95 \\
\bottomrule
\end{tabular}
\end{table}
\end{landscape}

\begin{landscape}
\begin{table}[htbp]
\centering
\caption[Unit commitment parameters for thermal power generators]{Unit commitment parameters for thermal power generation resources. Parameters from nuclear power generators obtained from \cite{sepulveda_role_2018, jenkins_benefits_2018}, existing gas power generators from \cite{schivley_powergenomepowergenome_2023}, and new gas power generators from \cite{mit_energy_initiative_future_2022}.}
\label{tab_benders_SI_thermal_uc}
\scriptsize
\renewcommand{\arraystretch}{1.3}
\setlength{\tabcolsep}{4pt}
\begin{tabular}{lcccccc}
\toprule
\shortstack[c]{Technology}
& \shortstack[c]{Start cost\\(\$/MW)}
& \shortstack[c]{Min up\\time (h)}
& \shortstack[c]{Min down\\time (h)}
& \shortstack[c]{Max hourly ramping rate\\(\% of nameplate\\capacity)}
& \shortstack[c]{Min output\\(\% of nameplate\\capacity)}
& \shortstack[c]{Max output\\(\% of nameplate\\capacity)} \\
\midrule
Existing Natural Gas (CC) 
& 101 & 6 & 6 & 64\% & 0--85.4\% & 100\% \\

Existing Natural Gas (CT) Power Generator
& 131 & 1 & 1 & 64\% & 13.5--67.7\% & 100\% \\

Existing Nuclear 
& 1,130 & 36 & 36 & 25\% & 50\% & 100\% \\

New Natural Gas (CC) 
& 69 & 4 & 4 & 100\% & 30\% & 100\% \\

New Natural Gas (CC-CCS)
& 110 & 4 & 4 & 100\% & 50\% & 100\% \\

New Natural Gas (CT)
& 158 & -- & -- & 100\% & 25\% & 100\% \\

New Nuclear Power Generator 
& 1,130 & 36 & 36 & 25\% & 30\% & 100\% \\
\bottomrule
\end{tabular}
\end{table}
\end{landscape}

\begin{landscape}
\begin{table}[htbp]
\centering
\caption[Fuel cost by region]{Regional cost of fuels for power generation by IPM region for the case studies listed in Table \ref{tab_benders_SI_case_study_regions}, obtained from EIA’s AEO 2023 reference scenario \citep{us_eia_annual_2023}.}
\label{tab_benders_SI_fuel_cost}
\scriptsize
\renewcommand{\arraystretch}{1.3}
\setlength{\tabcolsep}{6pt}
\begin{tabular}{lcc}
\toprule
Region & Natural Gas Price (\$/MMBtu) & Uranium Price (\$/MMBtu)\\
\midrule
NENG\_CT    & 3.51 & 0.71 \\
NENGREST    & 3.51 & 0.71 \\
NENG\_ME    & 3.51 & 0.71 \\
NY\_Z\_C\&E & 2.88 & 0.71 \\
NY\_Z\_F    & 2.88 & 0.71 \\
NY\_Z\_G-I  & 2.88 & 0.71 \\
NY\_Z\_J    & 2.88 & 0.71 \\
NY\_Z\_K    & 2.88 & 0.71 \\
NY\_Z\_A    & 2.88 & 0.71 \\
NY\_Z\_B    & 2.88 & 0.71 \\
NY\_Z\_D    & 2.88 & 0.71 \\
PJM\_WMAC   & 2.88 & 0.71 \\
PJM\_EMAC   & 3.01 & 0.71 \\
PJM\_SMAC   & 4.08 & 0.71 \\
PJM\_WEST   & 3.35 & 0.71 \\
PJM\_AP     & 3.39 & 0.71 \\
PJM\_COMD   & 3.07 & 0.71 \\
PJM\_ATSI   & 3.06 & 0.71 \\
PJM\_DOM    & 4.08 & 0.71 \\
PJM\_PENE   & 2.88 & 0.71 \\
S\_VACA     & 4.08 & 0.71 \\
S\_C\_KY    & 3.88 & 0.71 \\
S\_D\_AECI  & 3.66 & 0.71 \\
S\_C\_TVA   & 3.89 & 0.71 \\
S\_SOU      & 4.03 & 0.71 \\
FRCC        & 4.08 & 0.71 \\
\bottomrule
\end{tabular}
\end{table}
\end{landscape}

\begin{landscape}
\begin{table}[htbp]
\centering
\caption[Weighted regional average annual capacity factor and maximum available capacity expansion of greenfield VRE]{Weighted regional average annual capacity factor and maximum available capacity expansion of greenfield variable renewable energy (VRE) resources by IPM region for the case studies listed in Table \ref{tab_benders_SI_case_study_regions}. Solar and wind resource profiles were obtained using ZEPHYR (Zero-emissions Electricity system Planning with HourlY operational Resolution), based on 2011 weather data from NREL NSRDB (National Solar Radiation Database) and WTK (WIND Toolkit) \citep{sengupta_national_2018, draxl_wind_2015, brown_zephyr_2022}, which is consistent with the weather year used in the Net-Zero America demand profiles \citep{larson_net-zero_2021}. Together with transmission data from NREL ReEDS, ZEPHYR provides regional supply-curve bins with hourly capacity factors, maximum capacity expansion, and interconnection costs for solar, onshore wind, and offshore wind where available \citep{reeds_modeling_and_analysis_team_regional_2021}. The interconnection cost of each bin is added onto the investment cost of the corresponding greenfield technology in Table \ref{tab_Benders_SI_Greenfield}.}

\label{tab_benders_SI_vre_cf_capacity}
\scriptsize
\renewcommand{\arraystretch}{1.3}
\setlength{\tabcolsep}{6pt}
\begin{tabular}{lcccccc}
\toprule
Region
& \shortstack[c]{Offshore Wind\\ Capacity (GW)}
& \shortstack[c]{Onshore Wind\\ Capacity (GW)}
& \shortstack[c]{Solar\\ Capacity (GW)}
& \shortstack[c]{Offshore Wind\\Avg. CF}   
& \shortstack[c]{Onshore Wind\\Avg. CF}
& \shortstack[c]{Solar\\ Avg. CF}  \\
\midrule
NENGREST   & 10.6 & 1{,}388.6  & 24{,}475.3  & 0.49 & 0.38 & 0.21 \\
NENG\_CT   & --   & 12.7       & 224.0       & --   & 0.40 & 0.22 \\
NENG\_ME   & 10.6 & 122.2      & 2{,}113.2   & 0.56 & 0.41 & 0.21 \\
NY\_Z\_A   & --   & 16.8       & 287.7       & --   & 0.41 & 0.20 \\
NY\_Z\_B   & --   & 14.1       & 249.8       & --   & 0.41 & 0.20 \\
NY\_Z\_C\&E & --  & 68.6       & 1{,}205.7   & --   & 0.39 & 0.20 \\
NY\_Z\_D   & --   & 21.7       & 369.1       & --   & 0.40 & 0.20 \\
NY\_Z\_F   & --   & 38.1       & 670.7       & --   & 0.37 & 0.21 \\
NY\_Z\_G-I & --   & 14.3       & 253.6       & --   & 0.36 & 0.21 \\
NY\_Z\_J   & 4.9  & --         & --          & 0.54 & --   & --   \\
NY\_Z\_K   & 10.9 & 0.8        & 21.2        & 0.50 & 0.45 & 0.23 \\
PJM\_AP    & --   & 1{,}747.9  & 30{,}662.0  & --   & 0.35 & 0.21 \\
PJM\_ATSI  & --   & 252.5      & 4{,}784.2   & --   & 0.41 & 0.20 \\
PJM\_COMD  & --   & 33.9       & 1{,}133.3   & --   & 0.45 & 0.22 \\
PJM\_Dom   & 11.4 & 426.3      & 7{,}484.9   & 0.49 & 0.35 & 0.24 \\
PJM\_EMAC  & 8.5  & 880.9      & 15{,}611.2  & 0.55 & 0.39 & 0.23 \\
PJM\_PENE  & --   & 64.6       & 1{,}137.0   & --   & 0.38 & 0.20 \\
PJM\_SMAC  & --   & 9.5        & 166.9       & --   & 0.38 & 0.23 \\
PJM\_WMAC  & --   & 41.5       & 830.7       & --   & 0.37 & 0.21 \\
PJM\_West  & --   & 11{,}312.7 & 222{,}838.4 & --   & 0.38 & 0.22 \\
S\_C\_KY   & --   & 44.5       & 890.3       & --   & 0.38 & 0.22 \\
S\_C\_TVA  & --   & 17{,}676.3 & 312{,}825.9 & --   & 0.36 & 0.24 \\
S\_D\_AECI & --   & 70.0       & 1{,}346.2   & --   & 0.44 & 0.23 \\
S\_SOU     & --   & 6{,}401.4  & 102{,}222.4 & --   & 0.33 & 0.26 \\
S\_VACA    & 10.2 & 2{,}173.4  & 38{,}361.6  & 0.51 & 0.35 & 0.26 \\
FRCC       & 11.0 & 123.2      & 2{,}194.3   & 0.33 & 0.30 & 0.27 \\
\bottomrule
\end{tabular}
\end{table}
\end{landscape}

\begin{table}[htbp]
\centering
\caption[Discrete capacity sizes for power generation and storage resources]{Discrete capacity sizes for existing and new power generation and storage resource types. Capacity sizes for nuclear power generators are obtained from \cite{jenkins_benefits_2018, sepulveda_role_2018}, existing natural gas power generators from \cite{schivley_powergenomepowergenome_2023}, new natural gas power generators from \cite{mit_energy_initiative_future_2022}, new battery and variable renewable energy technologies from NREL ATB 2024 \citep{mirletz_2024_2024}, transmission lines from NREL ReEDS and supporting WECC transmission information \citep{reeds_modeling_and_analysis_team_regional_2021,wecc_transmission_nodate}. Values are reported in MW. For existing resource types, ranges indicate the minimum and maximum unit capacities represented in the 2050 setup, while single values for new resource types denote the discrete investment block sizes used in the MILP formulation.}
\label{tab_benders_SI_discrete_capacity}
\small
\renewcommand{\arraystretch}{1.3}
\setlength{\tabcolsep}{4pt}
\begin{tabular}{lc}
\toprule
Technology &
Discrete capacity size (MW) \\
\midrule
Existing Hydro Pumped Storage                    & 9.7--500.5 \\
Existing Conventional Hydro                      & 0.4--187.3 \\
Existing Small Hydro                             & 0.27--12.4 \\
Existing Natural Gas (CC) Power Generator        & 2--1472.2 \\
Existing Natural Gas (CT) Power Generator        & 0.5--183.1 \\
Existing Nuclear Power Generator                 & 852.7--1301.1 \\
Existing Onshore Wind                            & 15.3--340 \\
Existing Solar                                   & 1.9--50.9 \\
\midrule
New Natural Gas (CC-CCS) Power Generator         & 377 \\
New Natural Gas (CC) Power Generator             & 573 \\
New Natural Gas (CT) Power Generator             & 237 \\
New Nuclear Power Generator                      & 1000 \\
New Onshore Wind                                 & 3.2 \\
New Battery                                      & 60 \\
New Solar                                        & 100 \\
New Offshore Wind                                & 12 \\
Transmission Line                                & 1500 \\
\bottomrule
\end{tabular}
\end{table}

\section{Model implementation details}
\label{Benders_SI_Implementation}

The MACRO model codebase used in this analysis is available in the following fork and branch of the MacroEnergy.jl GitHub repository: \url{https://github.com/Junwenlaw/MacroEnergy.jl/tree/Clustering-Enhanced-Benders-Decomposition}. 

The BD algorithms are available in the following fork and branch of the MacroEnergySolvers.jl GitHub repository: \url{https://github.com/Junwenlaw/JWL_MacroEnergySolvers.jl/tree/Clustering-Enhanced-Benders-Decomposition}. 
    \bibliographystyle{elsarticle-harv}
    \bibliography{References}

@article{he_sector_2021,
	title = {Sector coupling \textit{via} hydrogen to lower the cost of energy system decarbonization},
	volume = {14},
	issn = {1754-5692, 1754-5706},
	doi = {10.1039/D1EE00627D},
	pages = {4635--4646},
	number = {9},
	journal = {Energy Environ. Sci.},
	author = {He, Guannan and Mallapragada, Dharik S. and Bose, Abhishek and Heuberger-Austin, Clara F. and Gençer, Emre},
	urldate = {2024-06-11},
	year = {2021},
	langid = {english},
}

@report{larson_net-zero_2021,
	location = {Princeton, {NJ}},
	title = {Net-Zero America: Potential Pathways, Infrastructure, and Impacts},
	institution = {Princeton University},
	author = {Larson, E and Greig, C and Jenkins, J and Mayfield, E and Pascale, A and Zhang, C and Drossman, J and Williams, R and Pacala, S and Socolow, R and Baik, {EJ} and Birdsey, R and Duke, R and Jones, R and Haley, B and Leslie, E and Paustian, K and Swan, A},
	year = {2021},
    url = {https://netzeroamerica.princeton.edu/?explorer=year&state=national&table=2020&limit=200},
}

@report{venkatesh_open_2022,
	title = {An Open Energy Outlook: Decarbonization Pathways for the {USA}},
	institution = {Carnegie Mellon University \& North Carolina State University},
	author = {Venkatesh, Aranya and Jordan, Katherine and Sinha, Aditya and Johnson, Jeremiah and Jaramillo, Paulina},
	year = {2022},
    url = {https://energy.cmu.edu/key-initiatives/open-energy-outlook/index.html},
}

@article{mallapragada_long-run_2020,
	title = {Long-run system value of battery energy storage in future grids with increasing wind and solar generation},
	volume = {275},
	issn = {03062619},
	doi = {10.1016/j.apenergy.2020.115390},
	pages = {115390},
	journal = {Applied Energy},
	author = {Mallapragada, Dharik S. and Sepulveda, Nestor A. and Jenkins, Jesse D.},
	urldate = {2024-06-11},
	year = {2020},
	langid = {english},
}

@article{draxl_wind_2015,
	title = {The Wind Integration National Dataset ({WIND}) Toolkit},
	volume = {151},
	issn = {03062619},
	doi = {10.1016/j.apenergy.2015.03.121},
	pages = {355--366},
	journal = {Applied Energy},
	author = {Draxl, Caroline and Clifton, Andrew and Hodge, Bri-Mathias and {McCaa}, Jim},
	urldate = {2024-06-11},
	year = {2015},
	langid = {english},
}

@article{sengupta_national_2018,
	title = {The National Solar Radiation Data Base ({NSRDB})},
	volume = {89},
	issn = {13640321},
	doi = {10.1016/j.rser.2018.03.003},
	pages = {51--60},
	journal = {Renewable and Sustainable Energy Reviews},
	author = {Sengupta, Manajit and Xie, Yu and Lopez, Anthony and Habte, Aron and Maclaurin, Galen and Shelby, James},
	urldate = {2024-06-11},
	year = {2018},
	langid = {english},
}

@report{reeds_modeling_and_analysis_team_regional_2021,
	title = {Regional Energy Deployment System ({ReEDS}) Model Documentation: Version 2020},
	url = {https://www.osti.gov/servlets/purl/1788425/},
	doi = {10.2172/1788425},
	pages = {NREL/TP--6A20--78195, 1788425, MainId:32104},
	number = {{NREL}/{TP}-6A20-78195, 1788425, {MainId}:32104},
	author = {{ReEDS Modeling and Analysis Team} and Ho, Jonathan and Becker, Jonathon and Brown, Maxwell and Brown, Patrick and Chernyakhovskiy, Ilya and Cohen, Stuart and Cole, Wesley and Corcoran, Sean and Eurek, Kelly and Frazier, Will and Gagnon, Pieter and Gates, Nathaniel and Greer, Daniel and Jadun, Paige and Khanal, Saroj and Machen, Scott and Macmillan, Madeline and Mai, Trieu and Mowers, Matthew and Murphy, Caitlin and Rose, Amy and Schleifer, Anna and Sergi, Brian and Steinberg, Daniel and Sun, Yinong and Zhou, Ella},
	urldate = {2024-06-11},
	year = {2021},
}

@software{brown_zephyr_2022,
	title = {{ZEPHYR}},
	url = {https://github.com/patrickbrown4/zephyr},
	author = {Brown, Patrick},
	year = {2022},
}

@software{schivley_powergenomepowergenome_2023,
	title = {{PowerGenome}/{PowerGenome}: v0.6.1},
	rights = {Open Access},
	url = {https://zenodo.org/record/8329515},
	doi = {10.5281/ZENODO.8329515},
	shorttitle = {{PowerGenome}/{PowerGenome}},
	version = {v0.6.1},
	publisher = {Zenodo},
	author = {Schivley, Greg and Welty, Ethan and Patankar, Neha and Jacobson, Anna and Xu, Qingyu and {Aneesha Manocha} and Pecora, Braden and Bhandarkar, Riti and Jenkins, Jesse D.},
	urldate = {2024-06-11},
	year = {2023},
}

@report{us_epa_documentation_2018,
	location = {Washington, D.C.},
	title = {Documentation for {EPA}’s Power Sector Modeling Platform v6 Using the Integrated Planning Model},
	institution = {{EPA}},
	author = {{U.S. EPA}},
	year = {2018},
    url ={https://www.epa.gov/power-sector-modeling/documentation-ipm-platform-v6-all-chapters}
}

@article{sepulveda_role_2018,
	title = {The Role of Firm Low-Carbon Electricity Resources in Deep Decarbonization of Power Generation},
	volume = {2},
	issn = {25424351},
	url = {https://linkinghub.elsevier.com/retrieve/pii/S2542435118303866},
	doi = {10.1016/j.joule.2018.08.006},
	pages = {2403--2420},
	number = {11},
	journal = {Joule},
	author = {Sepulveda, Nestor A. and Jenkins, Jesse D. and De Sisternes, Fernando J. and Lester, Richard K.},
	urldate = {2025-02-03},
	year = {2018},
	langid = {english},
}

@article{petkov_power--hydrogen_2020,
	title = {Power-to-hydrogen as seasonal energy storage: an uncertainty analysis for optimal design of low-carbon multi-energy systems},
	volume = {274},
	issn = {03062619},
	doi = {10.1016/j.apenergy.2020.115197},
	shorttitle = {Power-to-hydrogen as seasonal energy storage},
	pages = {115197},
	journal = {Applied Energy},
	author = {Petkov, Ivalin and Gabrielli, Paolo},
	urldate = {2025-02-03},
	year = {2020},
	langid = {english},
}

@article{jenkins_benefits_2018,
	title = {The benefits of nuclear flexibility in power system operations with renewable energy},
	volume = {222},
	issn = {03062619},
	url = {https://linkinghub.elsevier.com/retrieve/pii/S0306261918303180},
	doi = {10.1016/j.apenergy.2018.03.002},
	pages = {872--884},
	journal = {Applied Energy},
	author = {Jenkins, J.D. and Zhou, Z. and Ponciroli, R. and Vilim, R.B. and Ganda, F. and De Sisternes, F. and Botterud, A.},
	urldate = {2025-02-03},
	year = {2018},
	langid = {english},
}

@article{mignone_drivers_2024,
	title = {Drivers and implications of alternative routes to fuels decarbonization in net-zero energy systems},
	volume = {15},
	rights = {2024 Bezos Earth Fund, {EPRI}, {ExxonMobil} Technology and Engineering Co., {NREL}, Battelle Memorial Institute and The Author(s)},
	issn = {2041-1723},
	doi = {10.1038/s41467-024-47059-0},
	pages = {3938},
	number = {1},
	journal = {Nature Communications},
	author = {Mignone, Bryan K. and Clarke, Leon and Edmonds, James A. and Gurgel, Angelo and Herzog, Howard J. and Johnson, Jeremiah X. and Mallapragada, Dharik S. and {McJeon}, Haewon and Morris, Jennifer and O’Rourke, Patrick R. and Paltsev, Sergey and Rose, Steven K. and Steinberg, Daniel C. and Venkatesh, Aranya},
	urldate = {2025-05-26},
	year = {2024},
	langid = {english},
}

@article{giovanniello_influence_2024,
	title = {The influence of additionality and time-matching requirements on the emissions from grid-connected hydrogen production},
	volume = {9},
	rights = {2024 The Author(s), under exclusive licence to Springer Nature Limited},
	issn = {2058-7546},
	doi = {10.1038/s41560-023-01435-0},
	pages = {197--207},
	number = {2},
	journal = {Nature Energy},
	author = {Giovanniello, Michael A. and Cybulsky, Anna N. and Schittekatte, Tim and Mallapragada, Dharik S.},
	urldate = {2025-05-26},
	year = {2024},
	langid = {english},
}

@software{he_dolphyn_2023,
	title = {{DOLPHYN}: decision optimization for low-carbon power and hydrogen networks},
	rights = {{GPL}-2.0},
	url = {https://github.com/macroenergy/Dolphyn.jl},
	publisher = {{MacroEnergy}},
	author = {He, Guannan and Mallapragada, Dharik and Macdonald, Ruaridh and Law, Jun Wen and Shaker, Youssef and Zhang, Yuheng and Cybulsky, Anna and Chakraborty, Shantanu and Giovanniello, Michael},
	urldate = {2025-05-31},
	year = {2023},
}

@article{law_role_2025,
	title = {Role of Technology Flexibility and Grid Coupling on Hydrogen Deployment in Net-Zero Energy Systems},
	volume = {59},
	rights = {https://creativecommons.org/licenses/by/4.0/},
	issn = {0013-936X, 1520-5851},
	doi = {10.1021/acs.est.4c12166},
	pages = {4974--4988},
	number = {10},
	journal = {Environ. Sci. Technol.},
	author = {Law, Jun Wen and Mignone, Bryan K. and Mallapragada, Dharik S.},
	urldate = {2025-06-01},
	year = {2025},
	langid = {english},
}

@report{us_eia_annual_2023,
	title = {Annual Energy Outlook 2023},
	author = {{U.S. EIA}},
	year = {2023},
    url ={https://www.eia.gov/outlooks/aeo/pdf/aeo2023_narrative.pdf},
}

@misc{mirletz_2024_2024,
	title = {2024 Annual Technology Baseline ({ATB}) Cost and Performance Data for Electricity Generation Technologies},
	url = {https://www.osti.gov/servlets/purl/2377191/},
	doi = {10.25984/2377191},
	publisher = {{DOE} Open Energy Data Initiative ({OEDI}); National Renewable Energy Laboratory ({NREL})},
	author = {Mirletz, Brian and Vimmerstedt, Laura and Stehly, Tyler and Stright, Dana and Cohen, Stuart and Cole, Wesley and Duffy, Patrick and Feldman, David and Kurup, Parthiv and Ramasamy, Vignesh and Zuboy, Jarett and Oladosu, Gbadebo and Hoffmann, Jeffrey and Eberle, Annika and Roberts, Owen and Mulas Hernando, Daniel and Avery, Greg and Rosenlieb, Evan and Schleifer, Anna and Akindipe, Dayo and Witter, Eric and Fuchs, Becca and Zuckerman, Gabriel and Zolan, Alex and Abou Jaoude, Abdalla and Larson, Levi and Lohse, Chris and Guaita, Nahuel and Trivedi, Ishita and Joseck, Fred and Hedalen, Ty and Rakov, Ben and Sekar, Ashok},
	urldate = {2025-09-08},
	year = {2024},
	langid = {english},
}

@report{the_white_house_inflation_2022,
	title = {Inflation Reduction Act Guidebook - Clean Energy},
	url = {https://bidenwhitehouse.archives.gov/cleanenergy/inflation-reduction-act-guidebook/},
	author = {{The White House}},
	year = {2022},
}

@report{mit_energy_initiative_future_2022,
	location = {Cambridge, {MA}},
	title = {The Future of Energy Storage},
	institution = {{MITEI}},
	author = {{MIT Energy Initiative}},
	year = {2022},
    url = {https://energy.mit.edu/wp-content/uploads/2022/05/The-Future-of-Energy-Storage.pdf},
}

@article{shaker_multi-sectoral_2025,
	title = {Multi-sectoral impacts of H$_{\textrm{2}}$ and synthetic fuels adoption for heavy-duty transportation decarbonization},
	volume = {3},
	issn = {2753-3751},
	doi = {10.1088/2753-3751/ae58ad},
	pages = {025006},
	number = {2},
	journal = {Environ. Res.: Energy},
	author = {Shaker, Youssef and Law, Jun Wen and Botterud, Audun and Mallapragada, Dharik},
	urldate = {2026-04-21},
	year = {2026},
}

@article{law_decarbonization_2025,
	title = {Decarbonization pathways for liquid fuels: a multi-sector energy system perspective},
	issn = {2398-4902},
	doi = {10.1039/D5SE01654A},
	shorttitle = {Decarbonization pathways for liquid fuels},
	pages = {10.1039.D5SE01654A},
	journal = {Sustainable Energy \& Fuels},
	author = {Law, Jun Wen and Mignone, Bryan K. and Mallapragada, Dharik S.},
	urldate = {2026-04-25},
	year = {2026},
	langid = {english},
}

@article{brown_synergies_2018,
	title = {Synergies of sector coupling and transmission reinforcement in a cost-optimised, highly renewable European energy system},
	volume = {160},
	issn = {03605442},
	doi = {10.1016/j.energy.2018.06.222},
	pages = {720--739},
	journal = {Energy},
	author = {Brown, T. and Schlachtberger, D. and Kies, A. and Schramm, S. and Greiner, M.},
	urldate = {2026-03-13},
	year = {2018},
	langid = {english},
}

@article{kondziella_flexibility_2016,
	title = {Flexibility requirements of renewable energy based electricity systems – a review of research results and methodologies},
	volume = {53},
	issn = {13640321},
	doi = {10.1016/j.rser.2015.07.199},
	pages = {10--22},
	journal = {Renewable and Sustainable Energy Reviews},
	author = {Kondziella, Hendrik and Bruckner, Thomas},
	urldate = {2026-03-13},
	year = {2016},
	langid = {english},
}

@article{lund_review_2015,
	title = {Review of energy system flexibility measures to enable high levels of variable renewable electricity},
	volume = {45},
	issn = {13640321},
	doi = {10.1016/j.rser.2015.01.057},
	pages = {785--807},
	journal = {Renewable and Sustainable Energy Reviews},
	author = {Lund, Peter D. and Lindgren, Juuso and Mikkola, Jani and Salpakari, Jyri},
	urldate = {2026-03-13},
	year = {2015},
	langid = {english},
}

@article{brown_pypsa_2018,
	title = {{PyPSA}: Python for Power System Analysis},
	volume = {6},
	rights = {http://creativecommons.org/licenses/by/4.0},
	issn = {2049-9647},
	doi = {10.5334/jors.188},
	shorttitle = {{PyPSA}},
	pages = {4},
	number = {1},
	journal = {{JORS}},
	author = {Brown, Thomas and Hörsch, Jonas and Schlachtberger, David},
	urldate = {2026-03-13},
	year = {2018},
}

@article{pfenninger_calliope_2018,
	title = {Calliope: a multi-scale energy systems modelling framework},
	volume = {3},
	rights = {http://creativecommons.org/licenses/by/4.0/},
	issn = {2475-9066},
	doi = {10.21105/joss.00825},
	shorttitle = {Calliope},
	pages = {825},
	number = {29},
	journal = {{JOSS}},
	author = {Pfenninger, Stefan and Pickering, Bryn},
	urldate = {2026-03-13},
	year = {2018},
}

@article{hoffmann_review_2020,
	title = {A Review on Time Series Aggregation Methods for Energy System Models},
	volume = {13},
	issn = {1996-1073},
	doi = {10.3390/en13030641},
	pages = {641},
	number = {3},
	journal = {Energies},
	author = {Hoffmann, Maximilian and Kotzur, Leander and Stolten, Detlef and Robinius, Martin},
	urldate = {2026-03-14},
	year = {2020},
	langid = {english},
}

@article{novo_planning_2022,
	title = {Planning the decarbonisation of energy systems: The importance of applying time series clustering to long-term models},
	volume = {15},
	issn = {25901745},
	doi = {10.1016/j.ecmx.2022.100274},
	shorttitle = {Planning the decarbonisation of energy systems},
	pages = {100274},
	journal = {Energy Conversion and Management: X},
	author = {Novo, Riccardo and Marocco, Paolo and Giorgi, Giuseppe and Lanzini, Andrea and Santarelli, Massimo and Mattiazzo, Giuliana},
	urldate = {2026-03-14},
	year = {2022},
	langid = {english},
}

@article{miraftabzadeh_k-means_2023,
	title = {K-Means and Alternative Clustering Methods in Modern Power Systems},
	volume = {11},
	rights = {https://creativecommons.org/licenses/by-nc-nd/4.0/},
	issn = {2169-3536},
	doi = {10.1109/ACCESS.2023.3327640},
	pages = {119596--119633},
	journal = {{IEEE} Access},
	author = {Miraftabzadeh, Seyed Mahdi and Colombo, Cristian Giovanni and Longo, Michela and Foiadelli, Federica},
	urldate = {2026},
	year = {2023},
}

@article{kotzur_impact_2018,
	title = {Impact of different time series aggregation methods on optimal energy system design},
	volume = {117},
	issn = {09601481},
	doi = {10.1016/j.renene.2017.10.017},
	pages = {474--487},
	journal = {Renewable Energy},
	author = {Kotzur, Leander and Markewitz, Peter and Robinius, Martin and Stolten, Detlef},
	urldate = {2026-03-14},
	year = {2018},
	langid = {english},
}

@article{teichgraeber_clustering_2019,
	title = {Clustering methods to find representative periods for the optimization of energy systems: An initial framework and comparison},
	volume = {239},
	issn = {03062619},
	doi = {10.1016/j.apenergy.2019.02.012},
	shorttitle = {Clustering methods to find representative periods for the optimization of energy systems},
	pages = {1283--1293},
	journal = {Applied Energy},
	author = {Teichgraeber, Holger and Brandt, Adam R.},
	urldate = {2026-03-14},
	year = {2019},
	langid = {english},
}

@article{catania_impact_2025,
	title = {The impact of temporal clustering on long-term energy system models},
	volume = {399},
	issn = {03062619},
	doi = {10.1016/j.apenergy.2025.126354},
	pages = {126354},
	journal = {Applied Energy},
	author = {Catania, Matteo and Muliere, Giuseppe and Fattori, Fabrizio and Colbertaldo, Paolo},
	urldate = {2026-03-14},
	year = {2025},
	langid = {english},
}

@article{kotzur_time_2018,
	title = {Time series aggregation for energy system design: Modeling seasonal storage},
	volume = {213},
	issn = {03062619},
	doi = {10.1016/j.apenergy.2018.01.023},
	shorttitle = {Time series aggregation for energy system design},
	pages = {123--135},
	journal = {Applied Energy},
	author = {Kotzur, Leander and Markewitz, Peter and Robinius, Martin and Stolten, Detlef},
	urldate = {2026-03-14},
	year = {2018},
	langid = {english},
}

@article{kittel_temporal_2022,
	title = {Temporal aggregation of time series to identify typical hourly electricity system states: A systematic assessment of relevant cluster algorithms},
	volume = {247},
	issn = {03605442},
	doi = {10.1016/j.energy.2022.123458},
	shorttitle = {Temporal aggregation of time series to identify typical hourly electricity system states},
	pages = {123458},
	journal = {Energy},
	author = {Kittel, Martin and Hobbie, Hannes and Dierstein, Constantin},
	urldate = {2026-03-14},
	year = {2022},
	langid = {english},
}

@article{jacobson_computationally_2024,
	title = {A Computationally Efficient Benders Decomposition for Energy Systems Planning Problems with Detailed Operations and Time-Coupling Constraints},
	volume = {6},
	issn = {2575-1484, 2575-1492},
	doi = {10.1287/ijoo.2023.0005},
	pages = {32--45},
	number = {1},
	journal = {{INFORMS} Journal on Optimization},
	author = {Jacobson, Anna and Pecci, Filippo and Sepulveda, Nestor and Xu, Qingyu and Jenkins, Jesse},
	urldate = {2026-03-14},
	year = {2024},
	langid = {english},
}

@article{pecci_regularized_2025,
	title = {Regularized Benders Decomposition for High Performance Capacity Expansion Models},
	volume = {40},
	rights = {https://ieeexplore.ieee.org/Xplorehelp/downloads/license-information/{IEEE}.html},
	issn = {0885-8950, 1558-0679},
	doi = {10.1109/TPWRS.2025.3526413},
	pages = {3105--3116},
	number = {4},
	journal = {{IEEE} Trans. Power Syst.},
	author = {Pecci, Filippo and Jenkins, Jesse D.},
	urldate = {2026-03-14},
	year = {2025},
}

@article{brahmbhatt_benders_2025,
	title = {Benders Decomposition Using Graph Modeling and Multi-Parametric Programming},
	volume = {64},
	rights = {https://creativecommons.org/licenses/by/4.0/},
	issn = {0888-5885, 1520-5045},
	doi = {10.1021/acs.iecr.5c03189},
	pages = {21684--21700},
	number = {45},
	journal = {Ind. Eng. Chem. Res.},
	author = {Brahmbhatt, Parth and Cole, David L. and Zavala, Victor M. and Avraamidou, Styliani},
	urldate = {2026-03-14},
	year = {2025},
	langid = {english},
}

@article{geoffrion_generalized_1972,
	title = {Generalized Benders decomposition},
	volume = {10},
	rights = {http://www.springer.com/tdm},
	issn = {0022-3239, 1573-2878},
	doi = {10.1007/BF00934810},
	pages = {237--260},
	number = {4},
	journal = {J Optim Theory Appl},
	author = {Geoffrion, A. M.},
	urldate = {2026-03-14},
	year = {1972},
	langid = {english},
}

@article{tang_improved_2013,
	title = {An improved Benders decomposition algorithm for the logistics facility location problem with capacity expansions},
	volume = {210},
	rights = {http://www.springer.com/tdm},
	issn = {0254-5330, 1572-9338},
	doi = {10.1007/s10479-011-1050-9},
	pages = {165--190},
	number = {1},
	journal = {Ann Oper Res},
	author = {Tang, Lixin and Jiang, Wei and Saharidis, Georgios K. D.},
	urldate = {2026-03-14},
	year = {2013},
	langid = {english},
}

@article{marin_electric_1998,
	title = {Electric capacity expansion under uncertain demand: decomposition approaches},
	volume = {13},
	rights = {https://ieeexplore.ieee.org/Xplorehelp/downloads/license-information/{IEEE}.html},
	issn = {08858950},
	doi = {10.1109/59.667347},
	shorttitle = {Electric capacity expansion under uncertain demand},
	pages = {333--339},
	number = {2},
	journal = {{IEEE} Trans. Power Syst.},
	author = {Marin, A. and Salmeron, J.},
	urldate = {2026-03-14},
	year = {1998},
}

@article{baringo_wind_2012,
	title = {Wind Power Investment: A Benders Decomposition Approach},
	volume = {27},
	rights = {https://ieeexplore.ieee.org/Xplorehelp/downloads/license-information/{IEEE}.html},
	issn = {0885-8950, 1558-0679},
	doi = {10.1109/TPWRS.2011.2167764},
	shorttitle = {Wind Power Investment},
	pages = {433--441},
	number = {1},
	journal = {{IEEE} Trans. Power Syst.},
	author = {Baringo, Luis and Conejo, Antonio J.},
	urldate = {2026-03-14},
	year = {2012},
}

@article{mallapragada_impact_2018,
	title = {Impact of model resolution on scenario outcomes for electricity sector system expansion},
	volume = {163},
	issn = {03605442},
	doi = {10.1016/j.energy.2018.08.015},
	pages = {1231--1244},
	journal = {Energy},
	author = {Mallapragada, Dharik S. and Papageorgiou, Dimitri J. and Venkatesh, Aranya and Lara, Cristiana L. and Grossmann, Ignacio E.},
	urldate = {2026-03-14},
	year = {2018},
	langid = {english},
}

@article{soares_integrated_2022,
	title = {An Integrated Progressive Hedging and Benders Decomposition With Multiple Master Method to Solve the Brazilian Generation Expansion Problem},
	volume = {37},
	rights = {https://ieeexplore.ieee.org/Xplorehelp/downloads/license-information/{IEEE}.html},
	issn = {0885-8950, 1558-0679},
	doi = {10.1109/TPWRS.2022.3141993},
	pages = {4017--4027},
	number = {5},
	journal = {{IEEE} Trans. Power Syst.},
	author = {Soares, Alessandro and Street, Alexandre and Andrade, Tiago and Garcia, Joaquim Dias},
	urldate = {2026-03-14},
	year = {2022},
}

@article{goke_stabilized_2024,
	title = {Stabilized Benders decomposition for energy planning under climate uncertainty},
	volume = {316},
	issn = {03772217},
	doi = {10.1016/j.ejor.2024.01.016},
	pages = {183--199},
	number = {1},
	journal = {European Journal of Operational Research},
	author = {Göke, Leonard and Schmidt, Felix and Kendziorski, Mario},
	urldate = {2026-03-14},
	year = {2024},
	langid = {english},
}

@article{zhang_integrated_2025,
	title = {Integrated investment, retrofit and abandonment energy system planning with multi-timescale uncertainty using stabilised adaptive Benders decomposition},
	volume = {325},
	issn = {03772217},
	doi = {10.1016/j.ejor.2025.04.005},
	pages = {261--280},
	number = {2},
	journal = {European Journal of Operational Research},
	author = {Zhang, Hongyu and Grossmann, Ignacio E. and {McKinnon}, Ken and Knudsen, Brage Rugstad and Nava, Rodrigo Garcia and Tomasgard, Asgeir},
	urldate = {2026-03-14},
	year = {2025},
	langid = {english},
}

@article{you_multicut_2013,
	title = {Multicut Benders decomposition algorithm for process supply chain planning under uncertainty},
	volume = {210},
	rights = {http://www.springer.com/tdm},
	issn = {0254-5330, 1572-9338},
	doi = {10.1007/s10479-011-0974-4},
	pages = {191--211},
	number = {1},
	journal = {Ann Oper Res},
	author = {You, Fengqi and Grossmann, Ignacio E.},
	urldate = {2026-03-14},
	year = {2013},
	langid = {english},
}

@article{zhang_stabilised_2024,
	title = {A stabilised Benders decomposition with adaptive oracles for large-scale stochastic programming with short-term and long-term uncertainty},
	volume = {167},
	issn = {03050548},
	doi = {10.1016/j.cor.2024.106665},
	pages = {106665},
	journal = {Computers \& Operations Research},
	author = {Zhang, Hongyu and Mazzi, Nicolò and {McKinnon}, Ken and Nava, Rodrigo Garcia and Tomasgard, Asgeir},
	urldate = {2026-03-14},
	year = {2024},
	langid = {english},
}

@misc{macdonald_macroenergyjl_2025,
	title = {{MacroEnergy}.jl: A large-scale multi-sector energy system framework},
	rights = {Creative Commons Attribution 4.0 International},
	doi = {10.48550/ARXIV.2510.21943},
	shorttitle = {{MacroEnergy}.jl},
	publisher = {{arXiv}},
	author = {Macdonald, Ruaridh and Pecci, Filippo and Bonaldo, Luca and Law, Jun Wen and Weng, Yu and Mallapragada, Dharik and Jenkins, Jesse},
	urldate = {2026-03-14},
	year = {2025},
}

@article{lohmann_tailored_2017,
	title = {Tailored Benders Decomposition for a Long-Term Power Expansion Model with Short-Term Demand Response},
	volume = {63},
	issn = {0025-1909, 1526-5501},
	doi = {10.1287/mnsc.2015.2420},
	pages = {2027--2048},
	number = {6},
	journal = {Management Science},
	author = {Lohmann, Timo and Rebennack, Steffen},
	urldate = {2026-03-20},
	year = {2017},
	langid = {english},
}

@article{lara_deterministic_2018,
	title = {Deterministic electric power infrastructure planning: Mixed-integer programming model and nested decomposition algorithm},
	volume = {271},
	issn = {03772217},
	doi = {10.1016/j.ejor.2018.05.039},
	shorttitle = {Deterministic electric power infrastructure planning},
	pages = {1037--1054},
	number = {3},
	journal = {European Journal of Operational Research},
	author = {Lara, Cristiana L. and Mallapragada, Dharik S. and Papageorgiou, Dimitri J. and Venkatesh, Aranya and Grossmann, Ignacio E.},
	urldate = {2026-03-20},
	year = {2018},
	langid = {english},
}

@article{li_mixed-integer_2022,
	title = {Mixed-integer linear programming models and algorithms for generation and transmission expansion planning of power systems},
	volume = {297},
	issn = {03772217},
	doi = {10.1016/j.ejor.2021.06.024},
	pages = {1071--1082},
	number = {3},
	journal = {European Journal of Operational Research},
	author = {Li, Can and Conejo, Antonio J. and Liu, Peng and Omell, Benjamin P. and Siirola, John D. and Grossmann, Ignacio E.},
	urldate = {2026-03-20},
	year = {2022},
	langid = {english},
}

@article{mazzi_benders_2021,
	title = {Benders decomposition with adaptive oracles for large scale optimization},
	volume = {13},
	issn = {1867-2949, 1867-2957},
	doi = {10.1007/s12532-020-00197-0},
	pages = {683--703},
	number = {4},
	journal = {Math. Prog. Comp.},
	author = {Mazzi, Nicolò and Grothey, Andreas and {McKinnon}, Ken and Sugishita, Nagisa},
	urldate = {2026-03-21},
	year = {2021},
	langid = {english},
}

@article{ramirez-pico_benders_2023,
	title = {Benders Adaptive-Cuts Method for Two-Stage Stochastic Programs},
	volume = {57},
	issn = {0041-1655, 1526-5447},
	doi = {10.1287/trsc.2022.0073},
	pages = {1252--1275},
	number = {5},
	journal = {Transportation Science},
	author = {Ramírez-Pico, Cristian and Ljubić, Ivana and Moreno, Eduardo},
	urldate = {2026-03-21},
	year = {2023},
	langid = {english},
}

@article{munoz_new_2016,
	title = {New bounding and decomposition approaches for {MILP} investment problems: Multi-area transmission and generation planning under policy constraints},
	volume = {248},
	issn = {03772217},
	doi = {10.1016/j.ejor.2015.07.057},
	shorttitle = {New bounding and decomposition approaches for {MILP} investment problems},
	pages = {888--898},
	number = {3},
	journal = {European Journal of Operational Research},
	author = {Munoz, F.D. and Hobbs, B.F. and Watson, J.-P.},
	urldate = {2026-03-21},
	year = {2016},
	langid = {english},
}

@article{allen_improvements_2023,
	title = {Improvements for decomposition based methods utilized in the development of multi-scale energy systems},
	volume = {170},
	issn = {00981354},
	doi = {10.1016/j.compchemeng.2023.108135},
	pages = {108135},
	journal = {Computers \& Chemical Engineering},
	author = {Allen, R. Cory and Iseri, Funda and Demirhan, C. Doga and Pappas, Iosif and Pistikopoulos, Efstratios N.},
	urldate = {2026-03-21},
	year = {2023},
	langid = {english},
}

@article{brandenberg_refined_2021,
	title = {Refined cut selection for benders decomposition: applied to network capacity expansion problems},
	volume = {94},
	issn = {1432-2994, 1432-5217},
	doi = {10.1007/s00186-021-00756-8},
	shorttitle = {Refined cut selection for benders decomposition},
	pages = {383--412},
	number = {3},
	journal = {Math Meth Oper Res},
	author = {Brandenberg, René and Stursberg, Paul},
	urldate = {2026-03-21},
	year = {2021},
	langid = {english},
}

@inproceedings{lumbreras_transmission_2013,
	location = {Grenoble, France},
	title = {Transmission expansion planning using an efficient version of Benders' decomposition. A case study},
	isbn = {978-1-4673-5669-5},
	doi = {10.1109/PTC.2013.6652091},
	eventtitle = {2013 {IEEE} Grenoble {PowerTech}},
	pages = {1--7},
	booktitle = {2013 {IEEE} Grenoble Conference},
	publisher = {{IEEE}},
	author = {Lumbreras, Sara and Ramos, Andres},
	urldate = {2026-03-21},
	year = {2013},
}

@misc{barbar_representative_2022,
	title = {Representative period selection for power system planning using autoencoder-based dimensionality reduction},
	rights = {Creative Commons Attribution 4.0 International},
	doi = {10.48550/ARXIV.2204.13608},
	publisher = {{arXiv}},
	author = {Barbar, Marc and Mallapragada, Dharik S.},
	urldate = {2026-04-05},
	year = {2022},
}

@article{homaei_high-capture-rate_2026,
	title = {High-capture-rate carbon capture and storage enables cost-effective decarbonization of Europe’s power sector},
	volume = {1},
	issn = {3059-4308},
	doi = {10.1038/s44458-026-00036-8},
	pages = {34},
	number = {1},
	journal = {Commun. Sustain.},
	author = {Homaei, Shamim and Anantharaman, Rahul and Backe, Stian and Roussanaly, Simon and Tomasgard, Asgeir},
	urldate = {2026-04-05},
	year = {2026},
	langid = {english},
}

@article{sodwatana_appliance_2025,
	title = {Appliance decarbonization and its impacts on California’s energy transition},
	volume = {390},
	issn = {03062619},
	doi = {10.1016/j.apenergy.2025.125769},
	pages = {125769},
	journal = {Applied Energy},
	author = {Sodwatana, Mo and Saad, Dimitri M. and Ahumada-Paras, Mareldi and Brandt, Adam R.},
	urldate = {2026-04-05},
	year = {2025},
	langid = {english},
}

@report{wecc_transmission_nodate,
	title = {Transmission},
	url = {https://feature.wecc.org/soti/topic-sections/transmission/index.html},
	institution = {Western Electricity Coordinating Council},
	author = {{WECC}},
}

@report{us_census_bureau_county_nodate,
	title = {County Population Totals and Components of Change: 2020-2025},
	url = {https://www.census.gov/data/tables/time-series/demo/popest/2020s-counties-total.html},
	author = {{U.S. Census Bureau}},
    year = {2026},
}

@misc{office_of_research_computing_and_data_mit_about_nodate,
	title = {About the Engaging Cluster},
	url = {https://orcd.mit.edu/resources/about-engaging-cluster},
	author = {{Office of Research Computing and Data (MIT)}},
}

@article{johnston_switch_2019,
	title = {Switch 2.0: A modern platform for planning high-renewable power systems},
	volume = {10},
	issn = {23527110},
	doi = {10.1016/j.softx.2019.100251},
	shorttitle = {Switch 2.0},
	pages = {100251},
	journal = {{SoftwareX}},
	author = {Johnston, Josiah and Henriquez-Auba, Rodrigo and Maluenda, Benjamín and Fripp, Matthias},
	urldate = {2026-05-14},
	year = {2019},
	langid = {english},
}

@article{poncelet_impact_2016,
	title = {Impact of the level of temporal and operational detail in energy-system planning models},
	volume = {162},
	issn = {03062619},
	doi = {10.1016/j.apenergy.2015.10.100},
	pages = {631--643},
	journal = {Applied Energy},
	author = {Poncelet, Kris and Delarue, Erik and Six, Daan and Duerinck, Jan and D’haeseleer, William},
	urldate = {2026-05-14},
	year = {2016},
	langid = {english},
}

@article{arango_representative_2018,
  author={Tejada-Arango, Diego A. and Domeshek, Maya and Wogrin, Sonja and Centeno, Efraim},
  journal={IEEE Transactions on Power Systems}, 
  title={Enhanced Representative Days and System States Modeling for Energy Storage Investment Analysis}, 
  year={2018},
  volume={33},
  number={6},
  pages={6534-6544},
  doi={10.1109/TPWRS.2018.2819578}}

@ARTICLE{pineda_chronological_2018,
  author={Pineda, Salvador and Morales, Juan M.},
  journal={IEEE Transactions on Power Systems}, 
  title={Chronological Time-Period Clustering for Optimal Capacity Expansion Planning With Storage}, 
  year={2018},
  volume={33},
  number={6},
  pages={7162-7170},
  doi={10.1109/TPWRS.2018.2842093}}

@article{blanke_time_2022,
	title = {Time series aggregation for energy system design: review and extension of modelling seasonal storages},
	volume = {5},
	issn = {2520-8942},
	doi = {10.1186/s42162-022-00208-5},
	shorttitle = {Time series aggregation for energy system design},
	pages = {17},
	issue = {S1},
	journal = {Energy Inform},
	author = {Blanke, Tobias and Schmidt, Katharina S. and Göttsche, Joachim and Döring, Bernd and Frisch, Jérôme and Van Treeck, Christoph},
	urldate = {2026-05-14},
	year = {2022},
	langid = {english},
}

@article{yi_aggregate_2021,
	title = {Aggregate Operation Model for Numerous Small-Capacity Distributed Energy Resources Considering Uncertainty},
	volume = {12},
	rights = {https://ieeexplore.ieee.org/Xplorehelp/downloads/license-information/{IEEE}.html},
	issn = {1949-3053, 1949-3061},
	doi = {10.1109/TSG.2021.3085885},
	pages = {4208--4224},
	number = {5},
	journal = {{IEEE} Trans. Smart Grid},
	author = {Yi, Zhongkai and Xu, Yinliang and Gu, Wei and Yang, Lun and Sun, Hongbin},
	urldate = {2026-05-19},
	year = {2021},
}

@inproceedings{hoettecke_enhanced_2021,
	location = {Vaasa, Finland},
	title = {Enhanced time series aggregation for long-term investment planning models of energy supply infrastructure in production plants},
	rights = {https://ieeexplore.ieee.org/Xplorehelp/downloads/license-information/{IEEE}.html},
	isbn = {978-1-7281-7660-4},
	doi = {10.1109/SEST50973.2021.9543162},
	eventtitle = {2021 International Conference on Smart Energy Systems and Technologies ({SEST})},
	pages = {1--6},
	booktitle = {2021 International Conference on Smart Energy Systems and Technologies ({SEST})},
	publisher = {{IEEE}},
	author = {Hoettecke, Lukas and Thiem, Sebastian and Niessen, Stefan},
	urldate = {2026-05-14},
	year = {2021},
}

@article{lemarechal_new_1995,
	title = {New variants of bundle methods},
	volume = {69},
	rights = {http://www.springer.com/tdm},
	issn = {0025-5610, 1436-4646},
	doi = {10.1007/BF01585555},
	pages = {111--147},
	number = {1},
	journal = {Mathematical Programming},
	author = {Lemaréchal, Claude and Nemirovskii, Arkadii and Nesterov, Yurii},
	urldate = {2026-05-14},
	year = {1995},
	langid = {english},
}

@article{gondzio_new_2013,
	title = {New developments in the primal–dual column generation technique},
	volume = {224},
	rights = {https://www.elsevier.com/tdm/userlicense/1.0/},
	issn = {03772217},
	doi = {10.1016/j.ejor.2012.07.024},
	pages = {41--51},
	number = {1},
	journal = {European Journal of Operational Research},
	author = {Gondzio, Jacek and González-Brevis, Pablo and Munari, Pedro},
	urldate = {2026-05-14},
	year = {2013},
	langid = {english},
}

@article{bertsimas_stochastic_2025,
	title = {A Stochastic Benders Decomposition Scheme for Large-Scale Stochastic Network Design},
	volume = {37},
	issn = {1091-9856, 1526-5528},
	doi = {10.1287/ijoc.2023.0074},
	pages = {1163--1181},
	number = {5},
	journal = {{INFORMS} Journal on Computing},
	author = {Bertsimas, Dimitris and Cory-Wright, Ryan and Pauphilet, Jean and Petridis, Periklis},
	urldate = {2026-05-14},
	year = {2025},
	langid = {english},
}

@article{parolin_sectoral_2026,
	title = {Sectoral and spatial decomposition methods for multi-sector capacity expansion models},
	volume = {358},
	issn = {01968904},
	doi = {10.1016/j.enconman.2026.121356},
	pages = {121356},
	journal = {Energy Conversion and Management},
	author = {Parolin, Federico and Weng, Yu and Colbertaldo, Paolo and Macdonald, Ruaridh},
	urldate = {2026-05-17},
	year = {2026},
	langid = {english},
}

@article{fischetti_benders_2016,
	title = {Benders decomposition without separability: A computational study for capacitated facility location problems},
	volume = {253},
	issn = {03772217},
	doi = {10.1016/j.ejor.2016.03.002},
	shorttitle = {Benders decomposition without separability},
	pages = {557--569},
	number = {3},
	journal = {European Journal of Operational Research},
	author = {Fischetti, Matteo and Ljubić, Ivana and Sinnl, Markus},
	urldate = {2026-05-19},
	year = {2016},
	langid = {english},
}

@article{gruson_benders_2021,
	title = {Benders decomposition for a stochastic three-level lot sizing and replenishment problem with a distribution structure},
	volume = {291},
	issn = {03772217},
	doi = {10.1016/j.ejor.2020.09.019},
	pages = {206--217},
	number = {1},
	journal = {European Journal of Operational Research},
	author = {Gruson, Matthieu and Cordeau, Jean-François and Jans, Raf},
	urldate = {2026-05-19},
	year = {2021},
	langid = {english},
}

@article{kergosien_benders_2017,
	title = {A Benders decomposition-based heuristic for a production and outbound distribution scheduling problem with strict delivery constraints},
	volume = {262},
	issn = {03772217},
	doi = {10.1016/j.ejor.2017.03.028},
	pages = {287--298},
	number = {1},
	journal = {European Journal of Operational Research},
	author = {Kergosien, Y. and Gendreau, M. and Billaut, J.-C.},
	urldate = {2026-05-19},
	year = {2017},
	langid = {english},
}

\end{document}